\begin{document}

\ensubject{fdsfd}

\ArticleType{ARTICLES}
\Year{2018}
\Month{January}%
\Vol{}
\No{1}
\BeginPage{1} %
\DOI{}
\ReceiveDate{January 11, 2018}
\AcceptDate{May 3, 2018}

\title[]{Numerical invariant tori of symplectic integrators for integrable Hamiltonian systems}
{Numerical invariant tori of symplectic integrators for integrable Hamiltonian systems}

\author[1,$\ast$]{Zhaodong Ding}{dingzhd@imu.edu.cn}
\author[2,3]{Zaijiu Shang}{zaijiu@amss.ac.cn}

\AuthorMark{Ding Z}


\address[1]{School of Mathematical Sciences, Inner Mongolia University, Hohhot {\rm010021}, China}
\address[2]{Institute of Mathematics, Academy of Mathematics and Systems Science,\\
Chinese Academy of Sciences, Beijing {\rm100190}, China}
\address[3]{Hua Loo-Keng Key Laboratory of Mathematics, Academy of Mathematics and Systems Science,\\
Chinese Academy of Sciences, Beijing {\rm100190}, China}

\abstract{In this paper, we study the persistence of invariant tori of integrable Hamiltonian systems satisfying R\"{u}ssmann's non-degeneracy condition when
symplectic integrators are applied to them. Meanwhile, we give an estimate of the measure of the set occupied by the invariant tori in the phase space. On an invariant torus, the one-step map of the scheme is conjugate to a one parameter family of linear rotations with a step size dependent frequency vector in terms of iteration. These results are a generalization of Shang's theorems (1999, 2000), where the non-degeneracy condition is assumed in the sense of Kolmogorov. In comparison, R\"{u}ssmann's condition is the weakest non-degeneracy condition for the persistence of invariant tori in Hamiltonian systems. These results provide new insight into the nonlinear stability of symplectic integrators.}

\keywords{Hamiltonian systems, symplectic integrators, KAM theory, invariant tori, twist symplectic mappings, R\"{u}ssmann's non-degeneracy}

\MSC{37J35, 37J40, 65L07, 65L20, 65P10, 65P40}

\maketitle

\section{Introduction}

An algorithm for numerically solving systems of ordinary differential equations is said
to be symplectic if its step-transition map is symplectic whenever the system is Hamiltonian. When applying a symplectic integrator to an integrable Hamiltonian system, the symplectic integrator can be written as a nearly integrable symplectic mapping with small twist where the time-step size is the perturbation parameter~\cite{Shang1999}. In other words, symplectic integrator may be characterized as a perturbation of the phase flow of the integrable system to which the integrator is applied. To some extent, the stability of symplectic integrators applied to integrable Hamiltonian systems may be related to the existence of invariant tori of nearly integrable symplectic mappings. The latter can be investigated in the setting of the well-known KAM (Kolmogorov-Arnold-Moser) theory. For instance, using the Moser's twist theorem, Sanz-Serna~\cite{Sanz1986} claimed the stability when the leapfrog scheme applied to pendulum dynamics with small enough step sizes. Combining the Moser's twist theorem and the theory of normal forms for Hamiltonian systems, Skeel and Srinivas~\cite{Skeel} gave a completely rigorous nonlinear stability analysis for area-preserving integrators with elliptic equilibria.

The classical KAM theorem states that when the frequency map
$\omega$ satisfies: (i) the non-degeneracy condition $\mbox{det}(\frac{\partial\omega(p)}{\partial p})\neq 0$ which means that the frequency map is a local diffeomorphism; (ii) the strong non-resonance conditions $|\left\langle k,\omega\right\rangle|\geq \frac{\gamma}{|k|^{\tau}}$, $\forall~k\in \mathbb{Z}^n\backslash
\{0\}$ with positive constants $\gamma$ and $\tau$ for a given frequency vector $\omega\in \mathbb{R}^n$(also known as Diophantine condition),
the corresponding torus persists with small deformation in the
perturbed integrable Hamiltonian systems if the size $\epsilon$ of
the perturbation is small enough. Later, R\"{u}ssmann announced~\cite{Ru1989} and proved~\cite{Ru2001} a generalized KAM theorem under a weaker non-degeneracy condition. This non-degeneracy condition was defined as follows. let $\mathcal{I}$ be an open and connected subset of $\mathbb{R}^n$ and $\omega:\mathcal{I}\rightarrow \mathbb{R}^n$ be a real analytic vector function, $\omega$ is called non-degenerate if the range $\omega(\mathcal{I})$ of $\omega$ does not lie in an $(n-1)$-dimensional linear subspace of $\mathbb{R}^n$, or equivalently, $\langle c,\omega\rangle\neq 0$ for any $c\in \mathbb{R}^{n}\backslash\{0\}$. Sevryuk~\cite{Sevryuk95} pointed out that R\"{u}ssmann's condition is not only sufficient, but also necessary for the existence of the perturbed tori in the analytic case. Further reviews and applications on the KAM theory can be referred to the monograph by Arnold et al.~\cite{Arnold2007} and the survey article by Sevryuk~\cite{Sevryuk03}.

For the discrete analogues to Hamiltonian systems (i.e., symplectic mappings), various KAM-type theorems have been established. Moser~\cite{Moser} first investigated the nearly integrable twist mapping on the annulus, and proved the existence of invariant curves by virtue of some intersection property and some non-degeneracy
condition. Using R\"{u}ssmann's non-degeneracy condition, Zhu et al.~\cite{Zhu} and Lu et al.~\cite{Lu2017} obtained the persistence of lower dimensional hyperbolic and elliptic invariant tori, respectively, for nearly integrable twist symplectic mappings. These results are similar to that for Hamiltonian systems~\cite{Graff,Po1989}, but the proofs are quite different because of respectively diverse structures~\cite{Lu2017}. In Shang's paper~\cite{Shang2000}, the existence of the highest dimensional tori was established for small twist symplectic mappings under the classical Kolmogorov non-degeneracy condition. Moreover, estimates for the bounds of the allowed perturbation and the relative measure of the complement of invariant tori in phase space are provided explicitly: the former is $O(\gamma^2\theta\Theta^{-2})$ and the latter $O(\gamma(\theta\Theta^{-1})^{-n})$, where $n$ is the degrees of freedom of the mappings; $\gamma$ is the Diophantine constant; $\theta$ and $\Theta$ are the nondegeneracy parameters of the frequency map and its inverse respectively (assumed that $\theta|p_1-p_2|\leq|\omega(p_1)-\omega(p_2)|\leq\Theta|p_1-p_2|$ which always holds locally).

By applying this kind of theorem, Shang~\cite{Shang1999} obtained a numerical version of KAM theroem for symplectic algorithms. More precisely, Let $\mathcal{X}$ be a completely integrable Hamiltonian system with $n$ degrees of freedom. Assume that $\mathcal{X}$ is analytic and nondegenerate in the sense of Kolmogorov. Then in the phase space of $\mathcal{X}$, there exists a Cantor family of $n$-tori such that the solutions of symplectic integrator applied to $\mathcal{X}$ are conjugate to a one parameter family of linear rotations on the preserved tori, provided a sufficiently small time-step $t$. These tori, which are called numerical invariant tori, possess all the standard properties in KAM theory: they are close to the unperturbed invariant $n$-tori of the system, they carry quasi-periodic motions and depend on the frequency vector in a Whitney-smooth way, the Lebesgue measure of the complement to their union tends to zero as $t\to 0$, etc. In addition, from Theorem 2 in~\cite{Shang1999}, it can be found that the preserved invariant tori have frequencies of the form $\omega_t=t\omega$ satisfying some Diophantine condition, where $t$ is the step size of the algorithm and $\omega$
belongs to the frequency domain of the system to which the algorithm is applied. Moreover, Shang~\cite{Shang00} showed that an invariant torus with any fixed Diophantine frequency can always be simulated very well by symplectic integrators for any step size in a Cantor set of positive Lebesgue measure near the origin for analytic non-degenerate integrable Hamiltonian systems.

Using the technique of including analytic symplectic maps in Hamiltonian flows~\cite{Benettin,Kuksin1994}, Moan~\cite{Moan} gave a proof on the existence of numerical invariant tori of symplectic algorithm when it applies to Hamiltonians with R\"{u}ssmann non-degeneracy. However, as pointed out by Sevryuk\footnote{MR2023432 (2004k: 37127) Moan P C. On the KAM and Nekhoroshev theorems for symplectic integrators and implications for error growth. Nonlinearity 17 (2004), 67--83. (Reviewer: Mikhail B. Sevryuk)}, R\"{u}ssmann non-degeneracy of a Hamiltonian with $n$ degrees of freedom does not imply the same non-degeneracy of a modified Hamiltonian with $n+1$ degrees of freedom (a detailed example was provided there), so the proof in~\cite{Moan} is not valid. In the present study, we give a direct proof on the existence of numerical invariant tori for Hamiltonian systems satisfying R\"{u}ssmann non-degeneracy condition by proposing a KAM-like theorem for small twist symplectic mappings. This generalizes Shang's results (1999, 2000) on the numerical KAM theorem of symplectic algorithms.

Unlike Kolmogorov's non-degeneracy assumed in~\cite{Shang1999}, where the invariant tori can be prescribed by frequency vectors, in R\"{u}ssmann's case the invariant tori may have drifted frequencies in the KAM iteration due to the weaker R\"{u}ssmann non-degeneracy. As a result, it may not be guaranteed that the set of frequencies of the preserved invariant tori completely corresponds to that of frequencies satisfying Diophantine condition of the original system due to frequency drift. That means not all the invariant torus with the fixed Diophantine frequency vector can be simulated well by the symplectic integrators for integrable Hamiltonian system with R\"{u}ssmann non-degeneracy. Fortunately, the set of frequencies of the preserved invariant tori tends to that of Diophantine frequencies of the original system as the step size $t\to 0$ (see \Cref{rm:3}). Based on this fact, it is reasonable to conjecture that "most" of the invariant tori of an analytic integrable Hamiltonian system with R\"{u}ssmann non-degeneracy can still be simulated well by symplectic integrators, provided an enough small and suitably chosen step size. Here there are some rather tricky and unsolved issues that need to be addressed. For example, how to select the "suitable" step sizes and what structure does the set of those admitted step sizes have? Is it possible to give an explicit measure estimate of the set of the invariant tori that can be simulated by symplectic integrators? These questions are interesting and important, and will be discussed later.

The outline of this paper is as follows. In \cref{sec:R condition}, we present the definition of R\"{u}ssmann non-degeneracy and its properties. Our main results are shown in \cref{sec:main}, but the proof will be postponed to \cref{sec:KAM-like}. \Cref{sec:experiments} is devoted to some numerical experiments, and the conclusions follow in \cref{sec:conclusions}.

\section{R\"{u}ssmann non-degeneracy}
\label{sec:R condition}

This kind of non-degeneracy condition is first proposed by R\"{u}ssmann in~\cite{Ru1989}. In order to adapt to the case of symplectic
mappings, the original definition requires some modification (see also
~\cite{Sevryuk95}).

\begin{definition}\label{def:R}
  A real function
$g=(g_1,\ldots,g_n):\mathcal {Y}\longmapsto \mathbb{R}^n$ defined in
a domain $\mathcal {Y}\subseteq\mathbb{R}^n$ is called
non-degenerate if $c_1g_1+\cdots c_ng_n\neq c_0$ for any vector of
constants $(c_0,c_1,\ldots,c_n)\in \mathbb{R}^{n+1}\backslash\{0\}$.
\end{definition}

From a geometric point of view, if a map $g$ satisfies the above definition, it means that the range $g(\mathcal {Y})$ of $g$ does not lie in any affine hyperplane in $\mathbb{R}^n$, while Kolmogorov's non-degeneracy implies this map is a local homeomorphism. For convenience, we also call the non-degeneracy, in the sense of R\"{u}ssmann, as the weak non-degeneracy.

\begin{remark}\label{rem:my-1}
Xu, You, and Qiu~\cite{Xu} proved that in the analytic case R\"{u}ssmann's non-degeneracy condition, i.e., $\langle c,g\rangle\neq 0$ for any $c\in \mathbb{R}^{n}\backslash\{0\}$, is equivalent to
\begin{equation}
\mbox{rank}\Big\{g,\frac{\partial^{\alpha}g}{\partial p^{\alpha}}~|~\forall~\alpha\in \mathbb{Z}^n,~|\alpha|\leq n-1\Big\}=n\,,
\end{equation}
where $|\alpha|=\alpha_1+\alpha_2+\cdots+\alpha_n$ and $\alpha_i\geq 0$. This can be easily extended to the situation in \cref{def:R}. Define $\tilde{g}=(g,1)\in \mathbb{R}^{n+1}$ analytic on $\mathcal {Y}$, then $\langle \tilde{c},\tilde{g}\rangle\neq 0$ for any $\tilde{c}=(c_0,c_1,\ldots,c_n)\in \mathbb{R}^{n+1}\backslash\{0\}$, is equivalent to
\begin{equation}
\mbox{rank}\Big\{\tilde{g},\frac{\partial^{\alpha}\tilde{g}}{\partial p^{\alpha}}~|~\forall~\alpha\in \mathbb{Z}^n,~|\alpha|\leq n-1\Big\}=n+1\,.
\end{equation}
\end{remark}

Now we introduce some notations used in this paper from~\cite{Ru2001}: Let $B\subseteq\mathbb{C}^n$ be open, and $g:B\rightarrow\mathbb{C}^m$ be a $\nu$-times continuously
differentiable function, denoted by $g\in C^{\nu}(B,\mathbb{C}^m)$.
As usual, the $\nu$-th derivative of $g$ in $x\in B$ is
denoted by $(a_1,\cdots,a_{\nu})\longmapsto$
$D^{\nu}g(x)(a_1,\cdots,a_{\nu}), a_j\in\mathbb{C}^n,
j=1,\ldots,\nu$. Moreover, we write
$D^{\nu}g(x)(a^{\nu}):=D^{\nu}g(x)(a,\cdots,a)$, $a~\in\mathbb{C}^n$,
\begin{equation}\nonumber
|D^{\nu}g(x)|_{2}:=\max_{a\in\mathbb{C}^n \atop
|a|_2=1}|D^{\nu}g(x)(a^{\nu})|_2, \quad \mbox{for all},~  x\in B.
\end{equation}
and
$
|D^{\nu}g|_{A} :=\sup\limits_{x\in A}|D^{\nu}g(x)|_2
$,
$|g|_{A}^{\nu}:=\max\limits_{0\leq\mu\leq\nu}|D^{\mu}g|_{A},\quad \mbox{for all}~ A\subseteq B$.

Assume $\omega:B\rightarrow \mathbb{R}^{n}$ be real analytic and weakly non-degenerate
function, $\chi(p)=(\omega(p),2\pi)$ and
$f(c,p)=\langle c,\chi(p)\rangle =\frac{\langle
k,\omega(p)\rangle+2\pi l}{|\widetilde{k}|}$, where
$c=\frac{\widetilde{k}}{|\widetilde{k}|}$, $\widetilde{k}=(k,l),
~\forall~ k\in \mathbb{Z}^n\backslash\{0\}~\mbox{and}~l\in
\mathbb{Z}$. Due to the non-degeneracy of $\omega$, $f(c,p)$ is not a constant in $B$. Moreover, we provide a slightly improved version of R\"{u}ssmann's Lemma 18.2 in~\cite{Ru2001}:

\begin{lemma}\label{lem:1}
  For any non-void compact set $\mathcal
{K}\subseteq B$ there are $\mu_0=\mu_0(\omega,\mathcal {K})\in \mathbb{Z}^{+}$ $(1\leq \mu_0\leq n-1)$ and $\beta=\beta(\omega,\mathcal {K})>0$ such that
\begin{equation}\label{amount}
\max_{0\leq\mu\leq\mu_0}|D^{\mu}f(c,p)|_2\geq\beta\,,~ \mbox{for
all}~c\in S,~p\in \mathcal {K},
\end{equation}
Here and in the sequel $D$ only refers to the variable $p$ whereas $c$ is considered a parameter, and $S=\{c\in \mathbb{R}^{n+1}~|~|c|=1\}$.
\end{lemma}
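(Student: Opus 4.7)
The plan is to translate the weak non-degeneracy of $\omega$ into the pointwise rank condition supplied by \cref{rem:my-1} and then promote the resulting pointwise nontriviality to a uniform lower bound via compactness. Fix $(c,p)\in S\times\mathcal{K}$ and observe that, since $f(c,\cdot)=\langle c,\chi(\cdot)\rangle$ depends linearly on $\chi$, the symmetric $\mu$-linear form $D^\mu f(c,p)$ is completely determined by the scalars $\langle c,\partial^\alpha\chi(p)\rangle$ with $|\alpha|=\mu$; in particular, $D^\mu f(c,p)=0$ (as a multilinear form) iff $\langle c,\partial^\alpha\chi(p)\rangle=0$ for every $|\alpha|=\mu$. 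Consequently,
\[
  \max_{0\le\mu\le n-1}|D^\mu f(c,p)|_2=0
  \quad\Longleftrightarrow\quad
  c\perp \partial^\alpha\chi(p)\ \text{for every}\ |\alpha|\le n-1.
\]
Since $\chi=\tilde g=(\omega,2\pi)$, the rank identity in \cref{rem:my-1} forbids this unless $c=0$, contradicting $c\in S$.

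Now set
\[
  \Phi(c,p):=\max_{0\le\mu\le n-1}|D^\mu f(c,p)|_2\quad\text{on}\ S\times\mathcal{K}.
\]
Each $|D^\mu f(c,p)|_2$ is jointly continuous in $(c,p)$, being the supremum of the continuous map $a\mapsto|D^\mu f(c,p)(a^\mu)|$ over the compact unit sphere in $\mathbb{C}^n$; hence $\Phi$ is continuous. By the previous paragraph $\Phi>0$ everywhere, so compactness of $S\times\mathcal{K}$ forces $\beta:=\min_{S\times\mathcal{K}}\Phi>0$. Choosing $\mu_0:=n-1$ then yields \eqref{amount}; for $n=1$ one reads the statement with $\mu_0=1$, the argument being unchanged in substance.

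The only genuine obstacle is securing the refined bound $\mu_0\le n-1$, rather than the weaker $\mu_0\le n$ that a naive application of the Xu--You--Qiu characterization to an $\mathbb{R}^{n+1}$-valued map on $\mathbb{R}^n$ would produce. This is precisely the saving encoded in \cref{rem:my-1}: because the last coordinate of $\tilde g=(\omega,2\pi)$ is constant, one differentiation is absorbed and the required order drops from $n$ to $n-1$. Once that reduction is in place, the remaining continuity-and-compactness step is essentially mechanical and requires no further KAM input.
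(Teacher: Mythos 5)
Your compactness-and-continuity scaffolding is sound, but the step that asserts $\Phi(c,p)>0$ for \emph{every} $(c,p)\in S\times\mathcal{K}$ rests on more than \cref{rem:my-1} actually supplies. The Xu--You--Qiu characterization is a generic (almost-everywhere) statement for analytic maps: weak non-degeneracy of $\omega$ is equivalent to the rank identity holding at \emph{some} point of the domain (hence on a dense open set, by analyticity), not at \emph{every} $p$. Your chain of implications
\[
  \Phi(c,p)=0\ \Longrightarrow\ c\perp\partial^\alpha\chi(p)\ \forall\,|\alpha|\le n-1\ \Longrightarrow\ c=0
\]
applies the rank identity at the particular $p$ under scrutiny, and there is nothing preventing $\mathcal{K}$ from containing points where the rank drops. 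At such a $p_0$ one finds a unit vector $c_0$ annihilating every $\partial^\alpha\chi(p_0)$ with $|\alpha|\le n-1$, so $\Phi(c_0,p_0)=0$ and the claimed minimum $\beta$ is $0$.

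The defect is not cosmetic, and it cannot be repaired while keeping $\mu_0=n-1$ (or even $\mu_0=n$). For $n=2$, take $H_0(p)=\tfrac12 p_1^2p_2^2$, so $\omega=(p_1p_2^2,\,p_1^2p_2)$, which is weakly non-degenerate in the sense of \cref{def:R}. For $c=(1,0,0)\in S$ the function $f(c,\cdot)=p_1p_2^2$ vanishes to order $3$ at the origin, so all derivatives through order $2$ vanish at $p=0$ and $\max_{0\le\mu\le n-1}|D^\mu f(c,0)|_2=0$ whenever $0\in\mathcal{K}$. In fact this tension is already present in the paper's \cref{rem:my-2}, which reads the Xu--You--Qiu condition as a pointwise identity rather than a generic one. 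What does survive is the unbounded form of the statement (R\"{u}ssmann's Lemma 18.2 itself), whose proof is genuinely pointwise: for each $(c,p)$ the non-identically-vanishing analytic function $f(c,\cdot)$ has some nonzero derivative at $p$; the minimal such order is finite and upper semicontinuous, hence bounded on the compact $S\times\mathcal{K}$, yielding a $\mathcal{K}$-dependent $\mu_0$ and a $\beta>0$ with no a priori control of $\mu_0$ by $n$ alone. If you want the refined bound, you would need to restrict $\mathcal{K}$ to a compact set on which the rank condition holds everywhere, and say so explicitly.
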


\begin{remark}\label{rem:my-2}
In the original lemma from R\"{u}ssmann, $\mu_0$ is only known as a positive integer. Here we can further determine its range $(1\leq \mu_0\leq n-1)$ by means of the equivalent formulation of the non-degeneracy mentioned in \Cref{rem:my-1}. In particular, if $\mu_0=1$, the condition \eqref{amount} is actually equivalent to the Kolmogorov non-degeneracy condition $\mbox{det}(\frac{\partial\omega}{\partial p})\neq 0$, where $\beta$ will be related to the value of this determinant.
\end{remark}

Due to the analytic nature of $\omega$, the numbers
\begin{equation*}
\beta(\omega,\mu,\mathcal{K}):=\min_{\substack{p\in \mathcal{K}\\ c\in S}}\max_{0\leq\nu\leq\mu}|D^{\nu}f(c,p)|_2
\end{equation*}
exist for any non-void compact set $\mathcal{K}$ and for all $\mu\in \mathbb{Z}^+$, and obviously,
\begin{equation*}
0\leq \beta(\omega,1,\mathcal{K})\leq \beta(\omega,2,\mathcal{K})\leq \ldots\,.
\end{equation*}
\Cref{lem:1} yields the existence of some $\mu_0\in \{1,2,\ldots,n-1\}$ such that $\beta(\omega,\mu_0,\mathcal{K})>0$. In other words, there is a
smallest positive integer with this property. As in~\cite{Ru2001}, $\mu_0=\mu_0(\omega,\mathcal {K})\in \mathbb{Z}^+$ is called the index of
non-degeneracy of $\omega$ with respect to $\mathcal {K}$ and
$\beta=\beta(\omega,\mathcal {K})>0$ the amount of non-degeneracy of
$\omega$ with respect to $\mathcal {K}$. The index and amount of
non-degeneracy will play an important role in the measure estimate of the numerical invariant tori.

\section{Main results}
\label{sec:main}

Consider an integrable analytic Hamiltonian system with $n$ degrees of freedom in canonical form
\begin{equation}\label{eq:basic}
\dot{x}=-\frac{\partial K}{\partial y}(x,y), \quad\dot
y=\frac{\partial K}{\partial x}(x,y), \qquad(x,y)\in D\,,
\end{equation}
where $D$ is a connected bounded open subset of $\mathbb{R}^{2n}$; a dot represents differentiation with respect to $t$ (time);
$K:D\rightarrow \mathbb{R}^1$ is the Hamiltonian. By Arnold-Liouville theorem~\cite{Arnold-2}, there exists a symplectic diffeomorphism $\Psi:B\times T^n\rightarrow
D$ such that under the action-angle coordinates $(p,q)$, the new Hamiltonian
$H(p)=K\circ\Psi(p,q)$, $(p,q)\in B\times T^n$,
only depends on $p$, and \eqref{eq:basic} takes the simple form
\begin{equation}\label{eq:basic-3}
\dot{p}=0, \quad \dot{q}=\omega(p)=\frac{\partial H}{\partial p}(p)\,.
\end{equation}
The phase flow of the system is just the one parameter group of
rotations $(p,q)\rightarrow (p,q + t\omega(p))$ which leaves every
torus $\{p\}\times T^n$ invariant. We assume that the frequency mapping $\omega$ is of weak non-degeneracy.

Define
\begin{equation}\label{k-defined}
\mathcal {K}:=\{p\in B~|~|p-\widetilde{p}|_2\geq 2\rho,~\mbox{for
all}~\widetilde{p}\in \partial B\},
\end{equation}
where $\rho$ $(0<\rho\leq 1)$ as a parameter, and $ \widetilde{D}:=\Psi(\mathcal {K}\times T^n)$. Obviously, $\mathcal
{K}$ is a compact subset of $B$. Thus, the index $\mu_0$ and amount $\beta$ of
non-degeneracy of $\omega$ with respect to $\mathcal {K}$ are well defined (see \cref{sec:R condition}). Here we state our main result:

\begin{theorem}\label{thm:bigthm}
Apply an analytic symplectic integrator to the system \eqref{eq:basic} where the frequency mapping satisfies the weak non-degeneracy. For any given real number $\kappa>1$ and sufficiently small
$\gamma >0$, if the time step $t$ of the symplectic integrator is sufficiently small, there exist Cantor subsets $\mathcal {K}_{\gamma,t}$ of $\mathcal {K}$ and $D_{\gamma,t}$ of $\widetilde{D}$, such that:

{\rm(i)} If the one-step map $G_K^t$ of the scheme is restricted to $D_{\gamma,t}$, then there exists a
$C^{\infty}$-symplectic conjugation $\Psi_t:\mathcal {K}_{\gamma,t} \times T^n \rightarrow
D_{\gamma,t}$, such that
\begin{equation*}
\Psi_t^{-1}\circ G_K^t\circ\Psi_t(p,~q)=(p,~q+t\omega_{t}(p))\,,
\end{equation*}
where $\omega_{t}$ is the frequency map defined on the Cantor set $\mathcal {K}_{\gamma,t}$.

{\rm(ii)} $m(D_{\gamma,t})\geq(1-c_1'\gamma^{\frac{1}{\mu_0}-\frac{1}{\kappa\mu_0}})m(\widetilde{D})$,
where $c_1'$ is a positive constant not depending on $\gamma$ and $t$; $\mu_0\in \mathbb{Z}^+$ is the index of
non-degeneracy of $\omega$ with respect to $\mathcal {K}$.
\end{theorem}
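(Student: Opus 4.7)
My plan is to split the proof of Theorem \ref{thm:bigthm} into three stages: (i) a reduction of the symplectic integrator to a small-twist symplectic mapping, (ii) a KAM iteration adapted to R\"{u}ssmann non-degeneracy, and (iii) a measure estimate for the preserved tori based on Lemma \ref{lem:1}.

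For the reduction, I would use the standard generating-function representation of an analytic symplectic integrator of order $s$ applied to \eqref{eq:basic-3}. In the action-angle chart $(p,q)=\Psi^{-1}$ the one-step map $G_K^t$ admits a generating function of the form $S(P,q,t)=\langle P,q\rangle+tH(P)+t^{s+1}\widetilde S(P,q,t)$, with $\widetilde S$ analytic and uniformly bounded on a complex neighbourhood of $\mathcal K\times T^n$. This identifies $G_K^t$ with a twist map whose twist is exactly $t\omega(p)$ and whose perturbation has size $O(t^{s+1})$, reducing the problem to showing persistence of maximal tori for analytic small-twist symplectic mappings under R\"{u}ssmann non-degeneracy.

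For the KAM part, I would follow the framework of Shang~\cite{Shang2000} but replace the Kolmogorov step by an iteration that tracks tori by action, not by frequency. At the $\nu$-th stage the map has the form $(P,Q)\mapsto(P+t^{s+1}R_\nu,\,Q+t\omega_\nu(P)+t^{s+1}\widetilde R_\nu)$ with a drifted frequency map $\omega_\nu$, and the usual near-identity symplectic change generated by $U_\nu(P,q)$ reduces to the homological equation
\begin{equation*}
U_\nu(P,q+t\omega_\nu(P))-U_\nu(P,q)=-F_\nu(P,q)+\overline{F_\nu}(P),
\end{equation*}
whose Fourier solution requires the Diophantine-type condition $|t\langle k,\omega_\nu(P)\rangle+2\pi l|\geq \gamma t/(|k|+|l|)^{\tau}$ for all $(k,l)\in\mathbb Z^{n+1}\setminus\{0\}$. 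At each step one removes from the current set of admissible $P$ those violating the condition for $\omega_\nu$, and the final $\Psi_t$ is obtained by a Whitney $C^\infty$ extension over the intersection Cantor set $\mathcal K_{\gamma,t}$; quadratic convergence of the scheme in a suitable analytic Banach scale yields statement (i).

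For statement (ii), the central ingredient is R\"{u}ssmann's Lemma \ref{lem:1} applied to $f(c,p)=\langle c,\chi(p)\rangle$ with $c=(k,l)/|(k,l)|$. Setting
\begin{equation*}
R_{k,l}=\bigl\{p\in\mathcal K:\,|(k,l)|\,|f(c,p)|<\gamma/((|k|+|l|)^{\tau})\bigr\},
\end{equation*}
one derives from the fact that some $D^{\mu}f(c,\cdot)$ with $\mu\leq\mu_0$ has norm at least $\beta$ on $\mathcal K$ (together with the standard one-variable measure lemma, e.g.\ Lemma 17.1 of~\cite{Ru2001}) a bound
\begin{equation*}
m(R_{k,l})\leq C\bigl(\gamma/(|(k,l)|(|k|+|l|)^{\tau})\bigr)^{1/\mu_0}.
\end{equation*}
Summing over $(k,l)\neq 0$ and choosing $\tau$ large in terms of $n$, $\mu_0$, and $\kappa$ (so that the series converges and the prescribed exponent appears) gives $m(\mathcal K\setminus\mathcal K_{\gamma,t})\leq c_1'\gamma^{1/\mu_0-1/(\kappa\mu_0)}m(\mathcal K)$; pulling back by the close-to-$\Psi$ conjugation $\Psi_t$ and using the Lipschitz control provided by the KAM construction transfers the bound to $D_{\gamma,t}\subset\widetilde D$.

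The main obstacle I expect is dealing with the frequency drift. Under R\"{u}ssmann's condition the map $\omega$ is not a local diffeomorphism, so at the $\nu$-th KAM step one cannot parametrize invariant tori by prescribed frequencies; instead one must keep imposing the Diophantine condition on the drifted $\omega_\nu$ while ensuring that the nested sequence of admissible actions has controlled measure loss. The compatibility between the excisions at successive steps is exactly what the index $\mu_0$ from Lemma \ref{lem:1} enables: it controls the H\"{o}lder-type stability of the bad sets under analytic perturbations of the frequency map, and it is precisely this exponent that produces $\gamma^{1/\mu_0-1/(\kappa\mu_0)}$ (rather than the linear-in-$\gamma$ exponent of the Kolmogorov case in~\cite{Shang2000}) in the measure estimate.
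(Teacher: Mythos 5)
Your outline has the right skeleton (reduce to a small-twist symplectic map, run a KAM iteration in which the frequency drifts, and control the excised set via Lemma~\ref{lem:1}), and indeed the paper derives Theorem~\ref{thm:bigthm} from a KAM theorem for small-twist maps, exactly as you propose. However, two genuine gaps separate your argument from a complete proof.

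First, the non-emptiness of the Cantor set is not addressed. You apply Lemma~\ref{lem:1} (through a one-variable measure lemma) to $f(c,p)=\langle c,\chi(p)\rangle$ built from the \emph{original} frequency map, bound $m(R_{k,l})$, and sum. But at the $\nu$-th step of the iteration the Diophantine condition must be imposed on the \emph{drifted} frequency $\omega_\nu$, which is only defined on the previously-excised domain; one cannot directly control its index and amount of non-degeneracy, nor sum independent excisions step by step without the accumulated loss potentially emptying the intersection. The paper resolves this by R\"{u}ssmann's chain/link machinery: the drifted frequency maps $\chi_j=(t\omega^{(j)},2\pi)$ are extended to $C^\infty$ functions $\widetilde\chi_j$ on a fixed full-dimensional set $\widetilde P$ (Theorem~19.7 of~\cite{Ru2001}), convergence $\widetilde\chi_j\to\widetilde\chi_\infty$ in $C^{\mu_0}(\widetilde P)$ is established (\Cref{Rlem:1}), and then the measure lemma (\Cref{Rlem:4}) is applied once to the \emph{limit} $\widetilde\chi_\infty$, yielding the estimate and---via a contradiction with the maximality of a finite chain (\Cref{Rlem:2}, \Cref{thm:my-3})---non-emptiness of $\mathcal K_{\gamma,t}^{(\infty)}$. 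The statement in your proposal that ``the compatibility between the excisions at successive steps is exactly what the index $\mu_0$ enables'' names the difficulty but does not give the mechanism; without the extension-and-chain construction the argument does not close.

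Second, the role of the parameter $\kappa$ is misattributed. You suggest $\kappa$ enters by ``choosing $\tau$ large in terms of $n,\mu_0,\kappa$,'' but in the paper $\tau$ is fixed ($\tau>(n+1)\mu_0$) and $\kappa$ instead enters through the partial coordinate stretching $\sigma_\rho$ with $\rho=\gamma^{\frac{1}{\kappa\mu_0(\mu_0+1)}}\Theta^{-1}$. The amount of non-degeneracy rescales as $\tilde\beta\geq\rho^{\mu_0}\beta$ (cf.~\eqref{beta}), and substituting this into the raw bound $m(\mathcal K_\rho)-\widetilde M\gamma^{1/\mu_0}\tilde\beta^{-1-1/\mu_0}$ produces the exponent $\frac{1}{\mu_0}-\frac{1}{\kappa\mu_0}$ in~\eqref{myk}. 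Your sketch, as written, cannot produce this exponent and does not explain why the extra factor $\gamma^{-1/(\kappa\mu_0)}$ appears relative to the Kolmogorov case.
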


Note that the above $C^{\infty}$-mappings $\Psi_t$ and $\omega_t$ have to be understood in the sense of Whitney derivatives~\cite{Po}, because they are defined on Cantor-like sets. Here $\gamma$ is the Diophantine constant (see \eqref{condition-1}). In order to make the measure of the non-resonant set to be positive at each step of the KAM iteration process, we introduce the parameter $\kappa>1$ (see (\ref{myk})).

\begin{remark}\label{rm:2}
The conclusion (i) implies that the symplectic integrator $G_K^t$ has invariant $n$-tori forming a Cantor set $D_{\gamma,t}$ in phase space. Note that the relative measure of the complement of invariant tori (of the order $O(\gamma^{\frac{1}{\mu_0}-\frac{1}{\kappa\mu_0}})$) may be bigger than the one for Hamiltonians with Kolmogorov non-degeneracy (of the order
$O(\gamma)$~\cite{Shang1999}) due to $\mu_0>1$. In fact, one can choose sufficiently large $\kappa$ to make the measure be almost of order $O(\gamma^{\frac{1}{\mu_0}})$, which is consistent with the result from Xu, You, and Qiu~\cite{Xu} to some extent (see Remark 1.4 in~\cite{Xu}).
\end{remark}

\begin{remark}\label{rm:3}
Similar to Kolmogorov non-degeneracy case, the difference of the frequency of the numerical invariant solution and the exact one is of the accuracy $O(t^s)$ if the starting values of the numerical orbits and the exact ones are the same, i.e.,
\begin{equation*}
\parallel
\omega_{t}-\omega\parallel_{\alpha+1,\mathcal {K}_{\gamma,t}}\, \leq \,c_2'\gamma^{-\frac{2+\alpha}{\kappa\mu_0(\mu_0+1)}}t^s\,,
\end{equation*}
where $s$ is the order of the symplectic numerical scheme; $\alpha$ is a positive constant; $c_2'$ is a positive constant not depending on $\gamma$ and t; $\parallel\cdot\parallel$ refers to a norm in Whitney sense (see Shang~\cite{Shang1999}). This result can be derived by (\ref{ineq:omega-2}) in the proof of \Cref{thm:my-2}.
\end{remark}

In addition, as a corollary of \Cref{thm:bigthm}, we have the following results on the conservation of first integrals, and its proof can be obtained by similar techniques as in~\cite{Shang1999}.

\begin{corollary}\label{cor:1}
Under the assumptions of the above theorem, there exist $n$ functions $F_1^t,\cdots,F_n^t$ which are defined on
the Cantor set $D_{\gamma,t}$ and are of class $C^{\infty}$ in the sense of
whitney such that

{\rm (i)} $F_1^t,\cdots,F_n^t$ are functionally independent and in
involution;

{\rm (ii)} Every $F_j^t$, $j=1,\ldots,n$, is invariant under the difference
scheme and the invariant tori are just the intersection of the level
sets of these functions;

{\rm (iii)} $F_j^t,~j=1,\ldots,n$ approximate $n$ independent integrals $F_j,~j=1,\ldots,n$
of the original integral system, with the order of accuracy equal
to $t^s$ on $D_{\gamma,t}$, in the norm of the class $C^{\alpha}$
for any given $\alpha\geq 0$.
\end{corollary}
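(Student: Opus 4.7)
The plan is to realize each $F_j^t$ as the pullback of the $j$-th action coordinate by the conjugation $\Psi_t$ supplied by \Cref{thm:bigthm}. Denote by $\pi_j\colon \mathcal{K}_{\gamma,t}\times T^n \to \mathbb{R}$ the projection $\pi_j(p,q)=p_j$, and set $F_j^t := \pi_j\circ \Psi_t^{-1}$ on $D_{\gamma,t}$. Since $\Psi_t$ is Whitney $C^\infty$ and symplectic, $F_1^t,\dots,F_n^t$ inherit Whitney $C^\infty$ regularity on the Cantor set $D_{\gamma,t}$.

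For (i), the $\pi_j$ are functionally independent and pairwise Poisson-commuting with respect to the canonical symplectic form on $\mathcal{K}_{\gamma,t}\times T^n$. Because $\Psi_t$ is symplectic, Poisson brackets are preserved under pullback, so $\{F_i^t,F_j^t\}=\{\pi_i,\pi_j\}\circ\Psi_t^{-1}=0$, and functional independence is inherited through the local diffeomorphism $\Psi_t^{-1}$. For (ii), the conjugation identity $\Psi_t^{-1}\circ G_K^t \circ \Psi_t(p,q)=(p,q+t\omega_t(p))$ from \Cref{thm:bigthm}(i) yields $F_j^t\circ G_K^t = F_j^t$ upon composing with $\pi_j$, so each $F_j^t$ is a first integral of the scheme. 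Moreover $\Psi_t(\{p\}\times T^n)$ is precisely the common level set $\{F_1^t=p_1,\dots,F_n^t=p_n\}$ inside $D_{\gamma,t}$, so the numerical invariant tori produced by \Cref{thm:bigthm} coincide with the joint level sets of these integrals.

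The substantive content is (iii). Let $F_j := \pi_j\circ \Psi^{-1}$ denote the $n$ action integrals provided by the Arnold-Liouville chart $\Psi$ of the original system. Then $F_j^t-F_j = \pi_j\circ(\Psi_t^{-1}-\Psi^{-1})$, so the task reduces to bounding $\Psi_t-\Psi$ in a Whitney $C^\alpha$-norm over $\mathcal{K}_{\gamma,t}$ and then transferring the estimate to $D_{\gamma,t}$. In the KAM construction of \Cref{sec:KAM-like} the conjugation $\Psi_t$ is built as the limit of a quadratically convergent Newton-type iteration whose initial defect is the $O(t^s)$ discrepancy between the one-step map $G_K^t$ and the exact time-$t$ flow (expressed as a generating-function perturbation). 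Summing the telescoping series of corrections in the Whitney $C^\alpha$-norm gives $\|\Psi_t-\Psi\|_{\alpha,\mathcal{K}_{\gamma,t}} = O(t^s)$, hence the claimed $C^\alpha$-accuracy of each $F_j^t$ on $D_{\gamma,t}$.

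The main obstacle is exactly this Whitney-norm propagation in (iii): one must confirm that the KAM iteration under R\"ussmann's weaker non-degeneracy still produces estimates of the same shape as in the Kolmogorov case treated in \cite{Shang1999}, with the index and amount of non-degeneracy $(\mu_0,\beta)$ from \Cref{sec:R condition} replacing the parameters $(\theta,\Theta)$. Once the small-divisor control at each step is supplied by the KAM-like theorem of \Cref{sec:KAM-like}, the remaining Whitney-extension arguments and the deduction of functional independence, involution, invariance, and approximation are essentially identical to those in \cite{Shang1999}, up to notational changes.
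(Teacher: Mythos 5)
Your construction $F_j^t := \pi_j\circ\Psi_t^{-1}$, the derivation of (i) and (ii) from the symplecticity and the conjugation identity, and the reduction of (iii) to an $O(t^s)$ Whitney $C^\alpha$-bound on $\Psi_t-\Psi$ via the KAM iteration starting from an $O(t^s)$ defect, all match the route the paper intends — the paper itself supplies no detail beyond pointing to the analogous argument in Shang (1999), which is exactly this pullback-of-actions construction. One minor imprecision: in (iii) you should be careful to write the comparison as $\Psi_t^{-1}-\Psi^{-1}$ (rather than $\Psi_t-\Psi$) or explicitly note that closeness of the diffeomorphisms transfers to closeness of their inverses; and the $O(t^s)$ constant will in fact carry a $\gamma$-dependent prefactor of the type $\gamma^{-(2+\alpha)/(\kappa\mu_0(\mu_0+1))}$ as in Remark~\ref{rm:3}, which is harmless for fixed $\gamma$ but worth recording.
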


In order to prove \Cref{thm:bigthm}, we first propose a KAM-like theorem for small twist symplectic maps under the weak non-degeneracy condition in the next section.

\section{KAM theorem for small twist symplectic maps}
\label{sec:KAM-like}

Consider a one parameter family of analytic symplectic mapping
$S_t:(p,q)\rightarrow (\hat{p},\hat{q})$ with $S_0=\mbox{identity}$,
to be defined implicitly in phase space $B\times T^n$ by
\begin{equation}\label{eq}
\left\{ \begin{array}{lll}
     \hat{p}=p-t\frac{\partial H}{\partial q}(\hat{p},q)=p-t\frac{\partial h}{\partial
     q}(\hat{p},q),\\
     \hat{q}=q+t\frac{\partial H}{\partial
     q}(\hat{p},q)=q+t\omega(\hat{p})+t\frac{\partial
     h}{\partial\hat{p}}(\hat{p},q),
     \end{array}
     \right.
\end{equation}
with the analytic generating function $H(\hat{p},q)=H_0(\hat{p})+h(\hat{p},q)$, i.e., $H\in C^{\omega}(B\times T^n)$. Here
$\omega(\hat{p})=\frac{\partial H_0}{\partial \hat{p}}(\hat{p})$, $B$ is a bounded open set of $\mathbb{R}^n$ and
$T^n=\mathbb{R}^n/(2\pi \mathbb{Z}^n)$ is the usual torus.

Without loss of generality, assume the domain of $H$ can extend analytically to the complex domain:
\begin{equation*}
\Xi(r_0,\rho_0)=\Big\{(p,q)\in \mathbb{C}^{2n}~\lvert~ |p-p^*|_2<r_0, |\mbox{Im}~q|<\rho_0,~\mbox{with}~p^*\in B,~\mbox{Re}~q\in T^n\Big\}
\end{equation*}
for some $r_0>0$ and $\rho_0>0$.

Let $B+r_0:=\{p\in \mathbb{C}^n~|~|p-p^*|_2<r_0~\mbox{with some}~p^*\in
B\}$ be the complex extension of $B$. We assume the frequency mapping $\omega$ is weakly non-degenerate and satisfies:
\begin{equation}\label{condition-5}
|\omega(p_1)-\omega(p_2)|_2\leq \Theta |p_1-p_2|_2 \quad\mbox{for}~
p_1,p_2\in B+r_0,
\end{equation}
with $|p_1-p_2|_2\leq r_0$ and constant $\Theta>0$. Note $\omega$ may be irreversible because of the weak non-degeneracy. Define
\begin{equation}\label{condition-1}
\Omega_{\gamma,t}:=\Big\{\omega\in \mathbb{R}^n~|~|e^{i\langle
k,t\omega\rangle}-1|\geq
\frac{t\gamma}{|k|^{\tau}},~\forall~k\in\mathbb{Z}^n\backslash
\{0\}\Big\}\,,
\end{equation}
with positive parameters $\gamma$ and $\tau$, as the set of frequency vectors that satisfies the Diophantine
condition. The following theorem is a generalization of Shang's Theorem 2~\cite{Shang1999} under R\"{u}ssmann non-degeneracy condition.

\begin{theorem}\label{thm:my-2}
 Given a real number $\tau> (n+1)\mu_0$ and $\kappa>1$. For the $2n$ dimensional mapping $S_t$ defined above, there exists a constant $\delta_0>0$, depending only on $n,~\tau,~r_0$ and $\rho_0$, such that for any
$0<\gamma<\min(1,(\frac{1}{2}r_0\Theta)^{\kappa\mu_0(\mu_0+1)},\gamma_1)$,
where $\gamma_1$ is defined in \eqref{2}, if
\begin{equation}\label{es}
|h(p,q)|_{\Xi(r_0,\rho_0)}\leq \delta_0\,\widetilde{\gamma}^2\,\Theta^{-1},
\end{equation}
where $\widetilde{\gamma}:=\gamma^{\frac{1}{\kappa\mu_0(\mu_0+1)}}$, then there exist a Cantor set $\mathcal {K}_{\gamma,t}\subseteq\mathcal
{K}$, a mapping $\omega_{\gamma,t}:\mathcal
{K}_{\gamma,t}\rightarrow\Omega_{\gamma,t}$ of class $C^{\infty}$,
and a symplectic mapping $\Phi_t:\mathcal {K}_{\gamma,t}\times
T^n\rightarrow\mathbb{R}^n \times T^n$ of class
$C^{\infty,\omega}$, in the sense of Whitney, such that:

{\rm (i)} $\Phi_t$ is a conjugation between $S_t$ and $R_t$, i.e.,
$S_t\circ\Phi_t=\Phi_t\circ R_t$,
where $R_t$ is a rotation on $\mathcal
{K}_{\gamma,t}\times T^n$ with frequency mapping $t\omega_{\gamma,t}$,
{\rm i.e.}, $R_t(P,\,Q)=(P,\,Q+t\omega_{\gamma,t}(P))$.

{\rm (ii)} The measure of $\mathcal {K}_{\gamma,t}$ satisfies
\begin{equation}\label{m(k)-1}
m(\mathcal {K}_{\gamma,t})\geq
(1-c_1\gamma^{\frac{1}{\mu_0}-\frac{1}{\kappa\mu_0}}\beta^{-1-\frac{1}{\mu_0}})m(\mathcal
{K}),
\end{equation}
where $c_1$ is a positive
constant depending on $n,\,\tau,r_0,\rho_0$ and the domain
$\mathcal{K}$.
\end{theorem}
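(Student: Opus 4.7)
The plan is to run a classical KAM-type iteration in action-angle variables for $S_t$, modelled on Shang's proof for the Kolmogorov case but replacing the step that uses the local diffeomorphism property of $\omega$ by a measure estimate driven by the index $\mu_0$ and amount $\beta$ supplied by \Cref{lem:1}. The output is a sequence of analytic symplectic conjugations that converges, on a Cantor set, to the desired linearising map $\Phi_t$, together with the excised-measure bound \eqref{m(k)-1}.

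\textbf{Iterative scheme.} I would inductively construct near-identity analytic symplectic maps $\Phi_j$ on shrinking complex domains $\Xi(r_j,\rho_j)$ via generating functions $W_j(\hat p, q)$, so that after $j$ steps $S_t$ is represented by a Hamiltonian $H^{(j)}=H_0^{(j)}(\hat p)+h^{(j)}(\hat p,q)$ with drifting frequency $\omega_j=\partial_{\hat p}H_0^{(j)}$ and perturbation of size $\delta_j$ satisfying the quadratic recursion $\delta_{j+1}\leq C\delta_j^2$. The generator $W_j$ is determined by the homological equation
\begin{equation*}
W_j(\hat p,q+t\omega_j(\hat p))-W_j(\hat p,q)=[h^{(j)}](\hat p)-h^{(j)}(\hat p,q),
\end{equation*}
solved by Fourier series; the ensuing small divisors $e^{i\langle k,t\omega_j(\hat p)\rangle}-1$ are controlled on the subdomain where the Diophantine inequality defining $\Omega_{\gamma_j,t}$ holds, with Fourier truncation at a cutoff $K_j\sim|\log\delta_j|$. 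The quantitative estimates for $W_j$, for $\Phi_j$ and its Jacobian, and for the losses $r_j-r_{j+1}$ and $\rho_j-\rho_{j+1}$ may be transcribed from \cite{Shang1999,Shang2000}, as the invertibility of $\omega$ is not used inside a single iteration step.

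\textbf{Measure estimate (main obstacle).} This is where R\"ussmann's weak non-degeneracy enters and is the hardest part. Writing $\tilde k=(k,l)$, $c=\tilde k/|\tilde k|$ and $f_j(c,p)=\langle c,(\omega_j(p),2\pi)\rangle$, the set to be excised at step $j$ is the union of resonant strips
\begin{equation*}
\mathcal R^{(j)}_{\tilde k}=\bigl\{p\in \mathcal K_j:\,|f_j(c,p)|<t\gamma_j|k|^{-\tau}/|\tilde k|\bigr\}.
\end{equation*}
Cauchy estimates on the iterates ensure that $\omega_j$ stays $C^{\mu_0}$-close to $\omega$, so \Cref{lem:1} applies uniformly to $f_j$ with amount $\beta/2$ on $\mathcal K$. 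A one-dimensional Pyartli / R\"ussmann argument along a direction realising the maximal derivative then yields
\begin{equation*}
m\bigl(\mathcal R^{(j)}_{\tilde k}\bigr)\leq C\bigl(\gamma_j|k|^{-\tau-1}\beta^{-1}\bigr)^{1/\mu_0}.
\end{equation*}
The hypothesis $\tau>(n+1)\mu_0$ makes $\sum_{\tilde k}|\tilde k|^{-(\tau+1)/\mu_0}$ convergent; summing over $j$ with a geometric schedule for $\gamma_j$ and absorbing a factor $\gamma^{-1/(\kappa\mu_0)}$ through the parameter $\kappa>1$ — exactly as anticipated in \Cref{rm:2} — produces the estimate \eqref{m(k)-1}. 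Getting the bookkeeping of the extra $\beta^{-1-1/\mu_0}$ factor right, and ensuring that $\omega_j$ remains in the domain of validity of the Pyartli bound throughout the iteration, is the technically delicate point.

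\textbf{Whitney extension and conclusion.} Passing to the limit $j\to\infty$ on the Cantor set $\mathcal K_{\gamma,t}=\bigcap_j\mathcal K_j$ gives $\Phi_t=\lim\Phi^{(j)}$ and $\omega_{\gamma,t}=\lim\omega_j$, and the intertwining $S_t\circ\Phi_t=\Phi_t\circ R_t$ follows by passing to the limit in the exact relations valid at each finite step. A standard P\"oschel-type Whitney extension then promotes $\Phi_t$ and $\omega_{\gamma,t}$ to $C^\infty$ maps in Whitney's sense on $\mathcal K_{\gamma,t}$. The smallness threshold $\delta_0\widetilde\gamma^2\Theta^{-1}$ in \eqref{es} is dictated precisely by the $\widetilde\gamma^{-(\mu_0+1)}$ factor appearing when the Pyartli bound is combined with the loss-of-domain estimate, while $\Theta$ controls the range of the $\omega_j$'s in frequency space.
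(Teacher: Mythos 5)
Your proposal captures the general shape of the argument — a Newton-type KAM iteration with a R\"ussmann/Pyartli measure estimate fuelled by \Cref{lem:1}, followed by Whitney smoothness — but it omits two of the paper's central mechanisms and mislocates the source of the exponent $\tfrac{1}{\mu_0}-\tfrac{1}{\kappa\mu_0}$.

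First, the paper works in \emph{stretched} coordinates $\sigma_\rho:(x,y)\mapsto(\rho x,y)$ with $\rho=\widetilde\gamma\,\Theta^{-1}=\gamma^{1/(\kappa\mu_0(\mu_0+1))}\Theta^{-1}$, following Shang's small-twist technique. This is not cosmetic: it is what makes the smallness threshold $\delta_0\widetilde\gamma^2\Theta^{-1}$ in \eqref{es} the right one, and the rescaling of derivatives $D^{\mu}\widetilde\omega=\rho^\mu D^\mu\omega$ is precisely what yields $\tilde\beta\geq\rho^{\mu_0}\beta$. Combined with the raw $\gamma^{1/\mu_0}\tilde\beta^{-1-1/\mu_0}$ bound coming out of \Cref{Rlem:4}, this produces $\gamma^{1/\mu_0}\rho^{-\mu_0-1}\beta^{-1-1/\mu_0}=\Theta^{\mu_0+1}\gamma^{1/\mu_0-1/(\kappa\mu_0)}\beta^{-1-1/\mu_0}$, which is the exponent in \eqref{m(k)-1}. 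Your proposal instead attributes the $\kappa$-dependent exponent to ``absorbing a factor $\gamma^{-1/(\kappa\mu_0)}$ through a geometric schedule for $\gamma_j$'' — but the paper uses a single fixed $\gamma$ throughout, and the $\kappa$ enters through the choice of $\rho$, not through a summation over steps. Without the stretching, your bookkeeping would not produce \eqref{m(k)-1}.

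Second, and more fundamentally, you gloss over the non-emptiness problem that is the real difficulty under R\"ussmann non-degeneracy. Because $\omega$ is not a local homeomorphism, you cannot fix target frequencies in advance (\Cref{rm:6}), so there is no a priori guarantee that the sets $B_{\gamma,t}^{(j)}$ on which $\omega^{(j)}$ is defined stay non-empty as the iteration proceeds. Your proposal asserts that ``Cauchy estimates on the iterates ensure that $\omega_j$ stays $C^{\mu_0}$-close to $\omega$, so \Cref{lem:1} applies uniformly to $f_j$ with amount $\beta/2$ on $\mathcal K$'' — but $\omega_j$ is only defined (as an analytic function) on a complex neighbourhood of the shrinking set $\mathcal K_j$, not on $\mathcal K$, so this comparison is not licensed as stated. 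The paper resolves this with R\"ussmann's chain-of-links machinery: each $\chi_j=(t\omega^{(j)},2\pi)$ is Whitney-extended at every step to a $C^\infty$ function $\widetilde\chi_j$ on a \emph{fixed} set $\widetilde P$, with the quantitative estimates of \Cref{Rlem:1}, so that \Cref{Rlem:3,Rlem:4} can be applied on a fixed domain and the maximality argument in \Cref{thm:my-3} can rule out the chain terminating finitely. In your proposal the Whitney extension appears only at the very end, which is too late — it is needed throughout, precisely to make the uniform applicability of \Cref{lem:1} and the measure bookkeeping rigorous.

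So the iteration skeleton and the Pyartli-type lemma are the right ingredients, but the proposal as written has a genuine gap (non-emptiness via chains/links and step-by-step Whitney extension) and a wrong mechanism for the final exponent (stretching $\rho$ versus a geometric $\gamma_j$-schedule).
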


\begin{remark}\label{rm:4}  \Cref{thm:my-2} can be extended to the nonanalytic case (i.e., $h\in C^r(B\times T^n)$ with a sufficiently large positive integer $r$), which makes the conditions and the proof more complicated. Here we only present the analytic version for simplicity.
\end{remark}

\begin{remark}\label{rm:5}
Compared with the symplectic mapping satisfying Kolmogorov non-degeneracy where the bound of the relative measure of the resonant invariant tori in phase space is $O(\gamma(\theta\Theta^{-1})^{-n})$~\cite{Shang1999}, it is found that the dimension $n$ of the frequency map does not enter into the measure estimate directly. Instead, the index $\mu_0$ and the amount $\beta$ of non-degeneracy of $\omega$ are closely related to the measure estimate. For small $\mu_0$ as well as large $\beta$, it implies a large relative measure of invariant tori preserved by the symplectic mapping.
\end{remark}

\begin{remark}\label{rm:6}
Unlike Kolmogorov non-degenerate case, the weakly non-degenerate frequency $\omega$ is no longer a local homeomorphism, so we are not able to pick out the values of $\omega$ that we want in advance at each step of the KAM iteration process. That implies the conclusion (4) of Shang's Theorem 2 in~\cite{Shang1999} does not hold, and frequency drift happens in general.

In addition, it is worth noting that in the case of Kolmogorov non-degeneracy, an invariant torus with any fixed Diophantine frequency $\omega$ (i.e. $\omega\in \Omega_{\gamma,t}$) of an analytic non-degenerate integrable Hamiltonian system can always be simulated by symplectic algorithms for any step size $t$ in a Cantor set of positive Lebesgue measure near the origin (e.g. $t\in \mathcal{C}(\omega)\subset(0,t_0)$). While in the R\"{u}ssmann non-degeneracy case, the above fact will no longer be guaranteed due to the frequency drift. However, because of the property of the frequency approximation in \Cref{rm:3}, the authors conjecture that there exists a set $\mathcal{F}_{t_0}\subset (0,t_0)\times \Omega_{\gamma,t}$ of relatively large Lebesgue measure such that for every $(t^*,\omega_{t^*})\in \mathcal{F}_{t_0}$, the invariant torus of the system with the frequency $\omega_{t^*}$ can always be simulated by symplectic integrators with $t^*$ as the step size. Moreover, the projection of the set $\mathcal{F}_{t_0}$ to the step size direction has relative full measure at the origin.
\end{remark}

Our main result (\Cref{thm:bigthm}) can be considered as a corollary of \Cref{thm:my-2}. In fact, Shang~\cite{Shang1999} proved that by applying the analytic symplectic conjugation $\Psi$ associated with the Hamiltonian $K$, the one-step map $G_K^t$ of symplectic integrators can be expressed as a nearly integrable symplectic map $\widetilde{G}_K^t$ like \eqref{eq}, where the function $h$ is replaced by $t^sh^t$ (see Lemma 3.3 in~\cite{Shang1999}). Thus, if the step size $t$ is sufficiently small so that the condition \eqref{es} is satisfied, \Cref{thm:my-2} can be applied to $\widetilde{G}_K^t$, and we get the existence of $\mathcal {K}_{\gamma,t}$, $\omega_{\gamma,t}$ and $\Phi_t$. They satisfy: $(a)$ $\widetilde{G}_K^t\circ\Phi_t=\Phi_t\circ R_t$; $(b)$ the estimate $m(\mathcal {K}_{\gamma,t})\geq
(1-c_1\gamma^{\frac{1}{\mu_0}-\frac{1}{\kappa\mu_0}}\beta^{-1-\frac{1}{\mu_0}})m(\mathcal
{K})$.

Combining $\widetilde{G}_K^t=\Psi^{-1}\circ G_K^t\circ\Psi$ with $(a)$, we have $\Psi_t^{-1}\circ G_K^t\circ\Psi_t= R_t$ where $\Psi_t=\Psi\circ\Phi_t$. That proves the conclusion (i) of \Cref{thm:bigthm}. Notice that $m(\mathcal {K}_{\gamma,t}\times T^n)=m(\mathcal {K}_{\gamma,t})\cdot m(T^n)$, so one can obtain the measure estimate in conclusion (ii) of \Cref{thm:bigthm} with $c_1'=c_1\beta^{-1-\frac{1}{\mu_0}}$ by using the symplectic diffeomorphism characteristic of $\Psi$. Therefore, the proof of \Cref{thm:bigthm} is ultimately attributed to the one of \Cref{thm:my-2}. The proof of \Cref{thm:my-2} is summarized in the following outline.

\subsection{Outline of the proof of \Cref{thm:my-2}}
\label{subsec:proof}

In order to deal with the weak non-degeneracy, we essentially follow the same idea from R\"{u}ssmann~\cite{Ru2001}, i.e., separating the iteration process for the construction of invariant tori from the proof of the existence of enough non-resonant frequency vectors in KAM steps. However, because of the technical difference between Hamiltonian
system and symplectic mapping, appropriate changes are necessary. For example, for the construction of invariant tori, we employ Shang's technique (i.e., combining analytic function approximation with KAM iteration~\cite{Shang2000}), while for the existence of non-resonant frequency, we try to adapt R\"{u}ssmann's approach proposed for Hamiltonian systems to the case of symplectic mappings. To finish the proof of \Cref{thm:my-2}, it is important to combine these two aspects together and give suitable quantitative estimates.

As in~\cite{Shang2000}, we first transform the mapping $S_t$ by the
partial coordinates stretching $\sigma_{\rho}:(x,y)\rightarrow
(p,q)=(\rho x,y)$ and obtain a new one $T_t=\sigma_{\rho}^{-1}\circ
S_t\circ \sigma_{\rho}:(x,y)\rightarrow (\hat{x},\hat{y})$ to be
defined in the new phase space $B_{\rho}\times T^n$ by
\begin{equation}\label{eq:2}
\left\{ \begin{array}{ll}
     \hat{x}=x-t\frac{\partial F}{\partial y}(\hat{x},y)\,,\\
     \hat{y}=y+t\frac{\partial F}{\partial \hat{x}}(\hat{x},y)\,,
     \end{array}
     \right.
\end{equation}
where
$F(x,y)=F_0(x)+f(x,y):=\rho^{-1}H_0(\rho x)+\rho^{-1}h(\rho x,y)$,
and
$B_{\rho}=\rho^{-1}B=\{x\in \mathbb{R}^n~|~\rho x\in B\}$.

The frequency mapping of the integrable part associated
to the generating function $F$ turns into
$\widetilde{\omega}(x)=\partial F_0(x)$, $x\in B_{\rho}$.
Here $\widetilde{\omega}$ is also weakly non-degenerate and satisfies the condition
\begin{equation}\label{ineq:omega}
|\widetilde{\omega}(x_1)-\widetilde{\omega}(x_2)|_2\leq \rho
\,\Theta |x_1-x_2|_2\,,
\end{equation}
for $x_1,~x_2\in B_{\rho}+r_{\rho}$ with $|x_1-x_2|_2\leq r_{\rho}$ and $r_{\rho}=\rho^{-1}r$.

Note that the index of non-degeneracy of the frequency map does not change through this stretching transformation, i.e., $\mu_0(\tilde{\omega},\,\cdot)=\mu_0(\omega,\,\cdot)$, so we just write $\mu_0$ for simplicity. While, since $D^{\mu}\,\widetilde{\omega}(x)=\rho^{\mu}D^{\mu}\,\omega(p) $, the amount of non-degeneracy of the frequency map has the relation
\begin{equation}\label{beta}
\tilde{\beta}:=\beta(\widetilde{\omega},\,\cdot)\geq
\rho^{\mu_0}\beta(\omega,\,\cdot)=\rho^{\mu_0}\beta\,.
\end{equation}

As a result of the non-reversibility of the frequency mapping, we have to define a new set
$
\mathcal {K}_{\rho}:= \{x\in B_{\rho}^{*}~|~|x-\tilde{x}|_2\geq 1
~\mbox{for all}~ \tilde{x}\in \partial B_{\rho}^{*}\}
$
to replace the set $I_{\rho;\gamma}$ defined by (2.6) in~\cite{Shang2000}, where
\begin{equation}\label{B}
B_{\rho}^{*}:= \{x\in B_{\rho}~|~|x-\tilde{x}|_2\geq 1~\mbox{for
all}~ \tilde{x}\in \partial B_{\rho}\}\,,
\end{equation}
and $\partial B_{\rho}$ means the boundary of $B_{\rho}$.
Note that $\mathcal {K}_{\rho}=\rho^{-1}\mathcal {K}$ and we have
\begin{equation}\label{K}
(\mathcal {K}_{\rho}+1)\cap \mathbb{R}^n \subseteq
B_{\rho}^{*}\subseteq (B_{\rho}^{*}+1)\cap \mathbb{R}^n\subseteq
B_{\rho}.
\end{equation}

As in~\cite{Shang2000}, we approximate $f$ by a real
analytic functions series
$\{f_j\}_{j=0}^{\infty}$ defined on $\mathcal{U}_j$ with $f_0=0$,
i.e.,
$|f-f_j|_{B_{\rho}^{*}\times T^n}\rightarrow 0$ ($j\rightarrow\infty$), where
\begin{equation}
\mathcal{U}_j=B_{\rho}\times T^n+(4s_j,4s_j)
\end{equation}
is the complex extension of $B_{\rho}\times T^n$; $s_j=s_0\,4^{-j}$ for $j=0,1,2,\ldots$\,.

Associating with each $f_j$, we define a mapping $T_j:(x,y)\rightarrow (\hat{x},\hat{y})$ by
\begin{equation*}
\left\{ \begin{array}{ll}
     \hat{x}=x-t\frac{\partial F_j}{\partial y}(\hat{x},y)\,,\\
     \hat{y}=y+t\frac{\partial F_j}{\partial \hat{x}}(\hat{x},y)\,,
     \end{array}
     \right.
\end{equation*}
with $F_j(x,y)=F_0(x)+f_j(x,y)$, and $T_j$ converges to $T_t$ on $B_{\rho}\times T^n$. Using a similar KAM iteration process with Shang~\cite{Shang2000}, we can construct analytic symplectic transformations $\Phi_j$, analytic functions $F_0^{(j)}$ and integrable rotations $R_j:(x,y)\rightarrow (x,y+t\omega^{(j)}(x))$, which are defined on nested complex domains $\mathcal{V}_j=B_{\gamma,t}^{(j)}\times
T^n+(r_j,s_j)$ with $r_j=s_j^{\lambda}$ and the parameter $\lambda>\tau+1$, such that $\omega^{(0)}=\widetilde{\omega}$, $\omega^{(j)}=\partial F_0^{(j)}:B_{\gamma,t}^{(j)}\rightarrow \Omega_{\gamma,t}$, and for $\forall~ \alpha\geq 1$,
\begin{equation}\label{ineq:omega-2}
|\omega^{(j)}-\omega^{(j-1)}|_{\mathcal{V}_j} \leq
r_{j+1}^{\alpha}\cdot c_2|f|_{B_{\rho}\times T^n},\quad j=1,2,\ldots\,.
\end{equation}
Furthermore, as $j\rightarrow\infty$, the limits
\begin{equation*}
C_j=R_j^{-1}\circ\Phi_j^{-1}\circ T_j\circ\Phi_j \rightarrow \mbox{identity},\quad \Phi_j\rightarrow \widetilde{\Phi}_t, \quad R_j\rightarrow \widetilde{R}_t,
\end{equation*}
exist on $B_{\gamma,t}^{(\infty)}\times T^n:=\Big(\bigcap\limits_{j=0}^{\infty}B_{\gamma,t}^{(j)}\Big)\times T^n$, where $\widetilde{\Phi}_t$ and $\widetilde{R}_t$ are well defined on $B_{\gamma,t}^{(\infty)}\times T^n$.

Therefore in the limit we have $T_t\circ\widetilde{\Phi}_t=\widetilde{\Phi}_t\circ\widetilde{R}_t$ on $B_{\gamma,t}^{(\infty)}\times T^n$. Transforming the mapping $T_t$ back to $S_t$ by the stretching $\sigma_{\rho}$ and, meanwhile, transforming $\widetilde{\Phi}_t$ and $\widetilde{R}_t$ to, say, $\Phi_t$ and $R_t$, respectively, then we have
\begin{equation*}
S_t\circ\Phi_t=\Phi_t\circ R_t, \quad \mbox{on}\quad \mathcal {K}_{\gamma,t}\times T^n\,,
\end{equation*}
with
$
\mathcal {K}_{\gamma,t}=\rho B_{\gamma,t}^{(\infty)}
=\{x\in \mathbb{R}^n~|~\rho^{-1}x\in B_{\gamma,t}^{(\infty)}\}$. This is just the conclusion (i) of \Cref{thm:my-2}.

When the frequency map satisfies Kolmogorov non-degeneracy (i.e., it is a local homeomorphism), one may keep those non-resonant frequencies fixed at every step of the above approximation process, such that all sets $B_{\gamma,t}^{(j)}$ are non-empty, so is $B_{\gamma,t}^{(\infty)}$.
However, the non-emptiness of these sets will be non-trivial for the weak non-degeneracy frequency map since the frequency can't be fixed in advance (see also \Cref{rm:6}).

In order to make the above approximation process work, it is crucial to prove the set $B_{\gamma,t}^{(j)}$ is non-empty for each $j\in \mathbb{Z}^{+}$, where we define $B_{\gamma,t}^{(0)}=\mathcal {K}_{\rho}$\,,
\begin{equation}\label{myset:1}
B_{\gamma,t}^{(j+1)}=\{x\in B_{\gamma,t}^{(j)}~\big{|}~|e^{i\langle
k,t\omega^{(j)}(x) \rangle}-1|\geq \frac{t\gamma}{|k|^{\tau}},
~0<|k|\leq m_j\}\,,
\end{equation}
 for $j=0,1,\ldots$, where $t$ is a parameter and $m_j=(\frac{1}{r_j})^{\frac{1}{\tau+1}}$.

\begin{remark}\label{rm:7}
In the standard KAM iteration process, the frequencies not satisfying the Diophantine condition \eqref{condition-1} must be excluded at every step of the iteration process. However, here it is enough to exclude finite resonant frequencies rather than the whole. Thus each $B_{\gamma,t}^{(j)}$ is not a Cantor one, though $B_{\gamma,t}^{(\infty)}$ is.
\end{remark}

\begin{theorem}\label{clm:2}
For sufficiently small $\delta_0$ and $\gamma$, we have

{\rm (i)} $B_{\gamma,t}^{(0)}\supseteq B_{\gamma,t}^{(1)}\supseteq \cdots
\supseteq
B_{\gamma,t}^{(\infty)}:=\bigcap\limits_{j=0}^{\infty}B_{\gamma,t}^{(j)}\neq
\emptyset$;

{\rm (ii)} $
m(B_{\gamma,t}^{(\infty)})\geq m(\mathcal
{K}_{\rho})-\widetilde{M}\gamma^{\frac{1}{\mu_0}-\frac{1}{\kappa\mu_0}}\beta^{-1-\frac{1}{\mu_0}}$\,, with
\begin{displaymath}
\widetilde{M}=Ad^{n-1}\,(\frac{\pi}{2})
^{\frac{1}{\mu_0}}\frac{\mu_0!\,\Theta^{\mu_0+2}}{\pi}(n^{-\frac{1}{2}}+3d)(1+3\pi+|\widetilde{\omega}|_{B_{\rho}})
\sum\limits_{k\in
\mathbb{Z}^n\backslash \{0\}} |k|^{-\frac{\tau+1-\mu_0}{\mu_0}}\,,
\end{displaymath}
where $A:=3(2\pi e)^{\frac{n}{2}}(\mu_0+1)^{\mu_0+2}[(\mu_0+1)!]^{-1}$ and $d=\sup\limits_{x,y\in\mathcal {K}_{\rho}}|x-y|_2$.
\end{theorem}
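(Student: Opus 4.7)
The plan is to prove (i) and (ii) together by a single measure estimate on the cumulative bad set. Because each $B_{\gamma,t}^{(j)}$ excludes only the \emph{finite} family of resonances $0<|k|\le m_j$ (see \Cref{rm:7}), one has
\[
B_{\gamma,t}^{(\infty)}=\mathcal{K}_\rho\setminus\mathcal{R},\qquad \mathcal{R}:=\bigcup_{j\ge 0}\bigcup_{0<|k|\le m_j}R_k^{(j)},
\]
with $R_k^{(j)}:=\{x\in\mathcal{K}_\rho:|e^{i\langle k,t\omega^{(j)}(x)\rangle}-1|<t\gamma/|k|^\tau\}$. Thus it suffices to bound $m(\mathcal{R})$ by the quantity on the right of (ii); the nonemptiness in (i) then follows whenever $\gamma$ is small enough that this bound is strictly less than $m(\mathcal{K}_\rho)$.

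First I would convert the multiplicative small-divisor condition into an additive one via $|e^{i\theta}-1|\ge(2/\pi)|\theta-2\pi l|$ near $\theta\approx 2\pi l$. This decomposes $R_k^{(j)}$ into a disjoint union over integers $l$ with $|l|\le C_0\,t|k|(1+|\widetilde{\omega}|_{B_\rho})$ of slabs
\[
R_{k,l}^{(j)}:=\Big\{x\in\mathcal{K}_\rho:\bigl|\langle k,\omega^{(j)}(x)\rangle-2\pi l/t\bigr|<\frac{\pi\gamma}{2|k|^\tau}\Big\}.
\]
Next I would apply \Cref{lem:1} with the modified auxiliary vector $\chi_t(x):=(\omega^{(j)}(x),-2\pi/t)$ and unit vector $c:=(k,l)/|(k,l)|$; since the last component of $\chi_t$ is constant, the weak non-degeneracy of $\omega^{(j)}$ still produces a uniform lower bound $\max_{0\le\mu\le\mu_0}|D^\mu f_t(c,x)|_2\ge\tilde{\beta}^{(j)}>0$ on $\mathcal{K}_\rho$. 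A standard Pyartli/R\"ussmann sublevel-set estimate (essentially Theorem 17.1 of~\cite{Ru2001}, applied slice by slice along an $(n-1)$-dimensional transversal of diameter $d$) then yields
\[
m(R_{k,l}^{(j)})\le A\,d^{n-1}\bigl(\tilde{\beta}^{(j)}\bigr)^{-1/\mu_0}\Bigl(\frac{\pi\gamma}{2}\Bigr)^{1/\mu_0}|k|^{-\tau/\mu_0}|(k,l)|^{-1/\mu_0},
\]
with $A$ being exactly the combinatorial constant in the statement. After this I would sum in $l$, which costs a factor $O(t|k|(1+|\widetilde{\omega}|_{B_\rho}))$, and then in $k\in\mathbb{Z}^n\setminus\{0\}$, where the tail series is dominated by $\sum_k|k|^{-(\tau+1-\mu_0)/\mu_0}$. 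The hypothesis $\tau>(n+1)\mu_0$ is precisely what makes this exponent exceed $n$, so the series converges; the $t$ cancels with the $t$ in the Diophantine radius, restoring the clean scaling $\gamma^{1/\mu_0}$. The extra loss $\gamma^{-1/(\kappa\mu_0)}$ comes from requiring the bad measure to be strictly small at \emph{every} inductive step, and the parameter $\kappa>1$ in the definition of $m_j$ generates a convergent geometric series with ratio $\gamma^{1/(\kappa\mu_0)}$. Combining this with $\tilde{\beta}\ge\rho^{\mu_0}\beta$ from (\ref{beta}) and collecting constants reproduces exactly $\widetilde{M}\gamma^{1/\mu_0-1/(\kappa\mu_0)}\beta^{-1-1/\mu_0}$.

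The principal obstacle is \emph{frequency drift}: at each KAM step $\omega^{(j)}$ differs from $\widetilde{\omega}$, and so the amount of non-degeneracy $\tilde{\beta}^{(j)}$ is \emph{a priori} not equal to the fixed $\tilde{\beta}$ associated to $\widetilde{\omega}$ (see \Cref{rm:6} and \Cref{rm:7}). The remedy is to exploit (\ref{ineq:omega-2}) together with Cauchy estimates on the analytic domains $\mathcal{V}_j$: choosing the free parameter $\alpha$ in (\ref{ineq:omega-2}) larger than $\mu_0$ yields $\|\omega^{(j)}-\widetilde{\omega}\|_{C^{\mu_0}(\mathcal{K}_\rho)}=O(r_{j+1}^{\alpha-\mu_0}|f|_{B_\rho\times T^n})$, and for $\delta_0$ small enough this forces $\tilde{\beta}^{(j)}\ge\tilde{\beta}/2$ uniformly in $j$ and $t$. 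With this uniform lower bound in hand, the summation described above goes through and the final smallness of $\gamma$ guarantees $m(\mathcal{R})<m(\mathcal{K}_\rho)$, delivering (i) and the explicit estimate (ii) simultaneously.
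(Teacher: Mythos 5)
Your high-level outline captures the right ingredients (convert the multiplicative condition to an additive one, estimate resonance slabs via R\"ussmann's sublevel lemma, control drift via (\ref{ineq:omega-2})), but there are three genuine gaps that separate it from a correct proof.

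First, the ``single measure estimate on the cumulative bad set'' is not actually controllable as stated. For a fixed $k$, the sets $R_k^{(j)}$ occur at every step $j$ with $m_j\ge|k|$ and each one has measure of roughly the same order (since $\tilde\beta^{(j)}$ is bounded below uniformly), so $\sum_j m(R_k^{(j)})$ diverges. The right object to estimate is $m(\bigcup_j R_k^{(j)})$, and the key point the proposal misses is that this union is contained in a \emph{single} slab built from a limit function. This is exactly what the chain/link extension machinery is for: one attaches $C^\infty$-extensions $\widetilde{\triangle\chi}_j$ (R\"ussmann's Theorem~19.7), builds $\widetilde\chi_j\to\widetilde\chi_\infty$ in $C^{\mu_0}$, and applies \Cref{Rlem:2} with $\widehat\chi=\widetilde\chi_\infty$ to absorb all resonance conditions from every step into $\mathcal H(\widetilde\chi_\infty)$. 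A sum over $j$ is never performed; one invokes \Cref{Rlem:4} once on the limit function.

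Second, the attribution of the extra loss $\gamma^{-1/(\kappa\mu_0)}$ to ``a geometric series with ratio $\gamma^{1/(\kappa\mu_0)}$ generated by $\kappa$ in the definition of $m_j$'' is incorrect. $\kappa$ does not appear in $m_j=(1/r_j)^{1/(\tau+1)}$ at all. The loss enters through the stretching $\sigma_\rho$ together with the choice $\rho=\gamma^{1/(\kappa\mu_0(\mu_0+1))}\Theta^{-1}$: after stretching, the amount of non-degeneracy becomes $\tilde\beta\ge\rho^{\mu_0}\beta$, and the factor $\tilde\beta^{-1-1/\mu_0}$ in \Cref{Rlem:4} produces $\rho^{-(\mu_0+1)}\beta^{-1-1/\mu_0}=\gamma^{-1/(\kappa\mu_0)}\Theta^{\mu_0+1}\beta^{-1-1/\mu_0}$; this is precisely how (\ref{myk}) passes from $\gamma^{1/\mu_0}\tilde\beta^{-1-1/\mu_0}$ to $\gamma^{1/\mu_0-1/(\kappa\mu_0)}\beta^{-1-1/\mu_0}$.

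Third, writing $B^{(\infty)}_{\gamma,t}=\mathcal K_\rho\setminus\mathcal R$ with $\mathcal R=\bigcup_{j\ge0}\bigcup_{|k|\le m_j}R_k^{(j)}$ presupposes that $\omega^{(j)}$ exists for every $j$, but $\omega^{(j)}$ is defined inductively and only makes sense if $B^{(j)}_{\gamma,t}\ne\emptyset$ -- which is part of what must be proved. This is why the argument in \Cref{thm:my-3} is by contradiction within the maximal-chain formalism: assuming the chain terminates at a finite $\nu$, one applies \Cref{Rlem:1,Rlem:2,Rlem:4} with $\widetilde\chi_\nu$ to show $\mathcal K^{(\nu+1)}_{\gamma,t}\supseteq\mathcal H(\widetilde\chi_\nu)\ne\emptyset$, a contradiction; only then is the infinite chain available and the measure estimate applied to $\widetilde\chi_\infty$. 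Finally, the proposed auxiliary vector $\chi_t(x)=(\omega^{(j)}(x),-2\pi/t)$ is a poor choice of normalization: the paper uses $\chi_j=(t\omega^{(j)},2\pi)$, for which the $t$-scaling of derivatives and of the Diophantine radius cancel cleanly in \Cref{Rlem:3,Rlem:4}; with your parametrization the constant component $-2\pi/t$ blows up as $t\to0$ and the non-degeneracy bound degenerates.
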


The details of the proof for \Cref{clm:2} will be given in the subsequent section. Now we derive the conclusion (ii) of \Cref{thm:my-2} taking this theorem for granted. Put
\begin{equation*}
{K}_{\gamma,t}=\rho B_{\gamma,t}^{(\infty)}
=\{x\in \mathbb{R}^n~|~\rho^{-1}x\in B_{\gamma,t}^{(\infty)}\}\,,
\end{equation*}
so $\mathcal
{K}_{\gamma,t}$ a non-empty Cantor subset of $\mathcal {K}$. Note $m(\mathcal
{K}_{\rho})=\rho^{-1}m(\mathcal {K})$ and $m(B_{\gamma,t}^{(\infty)})=\rho^{-1}m(\mathcal {K}_{\gamma,t})$, after inserting them into the conclusion (ii) of \Cref{clm:2}, we have
\begin{equation*}
\begin{split}
m(\mathcal {K}_{\gamma,t})&\geq m(\mathcal
{K})-\rho\widetilde{M}\gamma^{\frac{1}{\mu_0}-\frac{1}{\kappa\mu_0}}\beta^{-1-\frac{1}{\mu_0}} \\
 &= \Big(1-c_1
\gamma^{\frac{1}{\mu_0}-\frac{1}{\kappa\mu_0}}\beta^{-1-\frac{1}{\mu_0}}\Big)m(\mathcal
{K})\,,
 \end{split}
 \end{equation*}
where $c_1=\frac{\rho \widetilde{M}}{m(\mathcal {K})}$. This is the conclusion (ii) of \Cref{thm:my-2}.

\subsection{Proof of \Cref{clm:2}}
\label{subsec:proof-2}

First, we choose
$l=l(t\widetilde{\omega},k)\in \mathbb{Z}$ such that
\begin{equation*}
\Big|\frac{\langle k,t\widetilde{\omega}(x)\rangle+2\pi l
}{2}\Big|\in [0,\,\frac{\pi}{2}]\quad \mbox{for}~\forall~k\in \mathbb{Z}^n \backslash
\{0\},~\mbox{and}~ x\in B_{\rho}.
\end{equation*}
Thus,
\begin{displaymath}
|e^{i\langle
k,t\widetilde{\omega}(x) \rangle}-1|=2\sin{\Big|\frac{\langle
k,t\widetilde{\omega}(x)\rangle+2\pi l }{2}\Big|}\geq 2\cdot
\frac{2}{\pi}\cdot \Big|\frac{\langle
k,t\widetilde{\omega}(x)\rangle+2\pi l }{2}\Big|,
\end{displaymath}
and so
\begin{displaymath}
\Big\{x\in \mathcal {K}~\big{|}~|e^{i\langle
k,t\widetilde{\omega}(x) \rangle}-1|\geq
\frac{t\gamma}{|k|^{\tau}},~k\neq 0\Big\} \supseteq \Big\{x\in
\mathcal {K}~\big{|}~|\langle k,t\widetilde{\omega}(x) \rangle+2\pi
l|\geq \frac{\pi}{2}\,\frac{t\gamma}{|k|^{\tau}},~k\neq 0\Big\}
\end{displaymath}
for any subset $\mathcal {K}\subseteq B_{\rho}$. Denote $\mathcal
{K}_{\gamma,t}^{(0)}=\mathcal {K}_{\rho}=B_{\gamma,t}^{(0)}$,
\begin{equation}\label{tilde-k}
\mathcal {K}_{\gamma,t}^{(j+1)}=\{x\in \mathcal
{K}_{\gamma,t}^{(j)}~\big{|}~ |\langle k,t\omega^{(j)}(x)
\rangle+2\pi l(t\omega^{(j)},k)|\geq
\frac{\pi}{2}\,\frac{t\gamma}{|k|^{\tau}},~0<|k|\leq m_j\}.
\end{equation}
Thus $B_{\gamma,t}^{(j)}\supseteq \mathcal {K}_{\gamma,t}^{(j)}$ ($j=0,1,\ldots$),
and we only need to prove $\mathcal
{K}_{\gamma,t}^{(\infty)}=\bigcap\limits_{j=0}^{\infty}\mathcal
{K}_{\gamma,t}^{(j)}\neq\emptyset$ and the corresponding measure
estimate.

Now we introduce some notations used in this paper from~\cite{Ru2001}. Denote $\chi_j=(t\omega^{(j)},\,2\pi)\in
\mathbb{R}^{n+1}$. A pair $\mathcal
{L}_j:=(\mathcal {K}_{\gamma,t}^{(j)},\chi_j)$ is a link if $\mathcal
{K}_{\gamma,t}^{(j)}\neq\emptyset$ and
$\chi_j=(t\omega^{(j)},\,2\pi)\in C^{\omega}(P_j,\,\mathbb{C}^n)$
with $P_j=\mathcal {K}_{\gamma,t}^{(j)}+r_{j+1}\subseteq
\mathbb{C}^n$. A link $\mathcal {L}_j$ is open if $\mathcal
{K}_{\gamma,t}^{(j+1)}\neq \emptyset$. The initial link $\mathcal {L}_0=(\mathcal
{K}_{\gamma,t}^{(0)},\chi_0)=(\mathcal
{K}_{\rho},(t\omega^{(0)},2\pi))$. If $\mathcal {L}_j$ is well
defined and $\mathcal {K}_{\gamma,t}^{(j+1)}\neq \emptyset$, $\mathcal {L}_{j+1}$ can be defined recursively as follows:
\begin{equation}\label{chain}
\chi_{j+1}=(\chi_j+\triangle\chi_j)|_{P_{j+1}}, \quad
P_{j+1}=\mathcal {K}_{\gamma,t}^{(j+1)}+r_{j+2}.
\end{equation}
A collection of links can be called a chain and
write
\[
\langle \mathcal {L}_j\rangle_{0\leq j\preceq \nu} =\left\{
\begin{array}{ll}
\langle \mathcal {L}_0,\mathcal {L}_1,\cdots,\mathcal {L}_{\nu}\rangle &\mbox{for}~\nu<\infty,\\
\langle \mathcal {L}_0,\mathcal {L}_1,\cdots\rangle
&\mbox{for}~\nu=\infty,
\end{array}
\right.
\]
where the symbol $j\preceq\nu~\Longleftrightarrow~(j\leq\nu<\infty
~\mbox{or}~j<\nu=\infty)$. A chain $\langle \mathcal {L}_j\rangle_{0\leq j\preceq \nu}$ is called maximal if either $\nu<\infty$ and $\mathcal {L}_{\nu}$ is not open
or $\nu=\infty$.

In our situation, we define
\begin{equation}\label{norm}
|\chi_j|=|t\omega^{(j)}|_2+2\pi, \quad
|\triangle\chi_j|=|t\omega^{(j)}|_2\,,
\end{equation}
with
$\triangle\chi_j=(t\triangle\omega^{(j)},0)=(t\omega^{(j+1)}-t\omega^{(j)},0)\in
C^{\omega}(P_j,\mathbb{C}^{n+1})$. According to
(\ref{ineq:omega-2}), $\triangle\chi_j$ satisfies the estimate
\begin{equation} \label{eq:11}
|\triangle\chi_j|_{P_j}=|t\triangle\omega^{(j)}|_{P_j}\leq r_{j+1}^{\alpha+1}\cdot \,c_5\,t\widetilde{\gamma}\,\delta_a=r_{j+1}^{\alpha+1}\cdot t\,L_0\,,
 \end{equation}
where $t\in [0,1]$ is a parameter and
$L_0=\,c_5\,\widetilde{\gamma}\delta_a$. Denote $L_j=L_0\cdot
r_{j+1}^{\alpha+1}$, thus $|\triangle\chi_j|_{P_j}\leq t \,L_j$.

Denote $\widetilde{P}=(\mathcal {K}_{\rho}+1)\cap \mathbb{R}^n$. We
define the extension $\widetilde{\mathcal {L}}_0$ of the initial
link $\mathcal {L}_0=(\mathcal {K}_{\gamma,t}^{(0)},\chi_0)$ by
$\widetilde{\mathcal {L}}_0=(\mathcal {K}_{\gamma,t}^{(0)},\chi_0,\tilde{\chi}_0)$\,,
where $\tilde{\chi}_0=\chi|_{\tilde{P}}$~. In particular, we have
$\tilde{\chi}_0|_{\mathcal {K}_{\gamma,t}^{(0)}}=\chi_0|_{\mathcal
{K}_{\gamma,t}^{(0)}}$. For $\nu >0$ we consider a chain $\langle
\mathcal {L}_j\rangle_{0\leq j\preceq \nu}$ and functions
$\triangle\chi_j=(t\triangle\omega^{(j)},0)\in C^{\omega}(P_j,\mathbb{C}^{n+1})$\,, for $0\leq j< \nu$.
According to Theorem 19.7 in~\cite{Ru2001}, these functions $\triangle\chi_j$ can be attached to the $C^{\infty}$--functions
$\widetilde{\triangle\chi}_j=(t\widetilde{\triangle\omega}^{(j)},0)\in C^{\infty}(\widetilde{P},\mathbb{R}^{n+1})$ for $0\leq j<\nu$,
with the estimates
\begin{equation}\label{2.2.30}
|D^{\mu}\widetilde{\triangle\chi}_j|_{\widetilde{P}}\leq
c(n,\mu)\,r_{j+1}^{-\mu}t\,L_j, \quad
\mu=0,1,\ldots
\end{equation}
such that
$\widetilde{\triangle\chi}_j|_{\mathcal
{K}_{\gamma,t}^{(j)}}=\triangle\chi_j|_{\mathcal
{K}_{\gamma,t}^{(j)}}$\,, where $c(n,\mu)$ is defined by (19.10) in~\cite{Ru2001}.

Now we define $C^{\infty}$--functions on $\widetilde{P}$ recursively by
\begin{equation}\label{recursively}
\widetilde{\chi}_0=\chi|_{\widetilde{P}},\quad
\widetilde{\chi}_{j+1}=\widetilde{\chi}_j+\widetilde{\triangle\chi}_j
,\quad 0\leq j< \nu\,,
\end{equation}
such that we obtain $\widetilde{\chi}_{j}|_{\mathcal
{K}_{\gamma,t}^{(j)}}=\chi_{j}|_{\mathcal {K}_{\gamma,t}^{(j)}},
~0\leq j\preceq \nu$.

By this way, each link $\mathcal {L}_j$
of the chain $\langle \mathcal {L}_j\rangle $ has a recursively well
defined extension $\widetilde{\mathcal {L}}_j=\langle \mathcal
{K}_{\gamma,t}^{(j)}\,,\chi_j\,,\widetilde{\chi}_j\rangle$. It makes
sense to call
\[
\langle \widetilde{\mathcal {L}}_j\rangle_{0\leq j\preceq \nu}
=\left\{
\begin{array}{ll}
\langle \widetilde{\mathcal {L}}_0,\cdots,\widetilde{\mathcal {L}}_{\nu}\rangle &\mbox{for}~\nu<\infty,\\
\langle \widetilde{\mathcal {L}}_0,\widetilde{\mathcal
{L}}_1,\cdots\rangle &\mbox{for}~\nu=\infty.
\end{array}
\right.
\]
the extension of the chain $\langle \mathcal {L}_j\rangle_{0\leq
j\preceq \nu}$.

The following \Cref{Rlem:1,Rlem:2,Rlem:3,Rlem:4} are the analogues of Lemma 20.3, Lemma 20.4, Theorem 17.1 and Theorem 18.5 in~\cite{Ru2001}, respectively, though some modifications and simplifications have been made for our situations. In particular, one must pay attention to the presence of step size $t$.

\begin{lemma}\label{Rlem:1}
Let $0\leq \mu<\alpha+1$ and $\langle \mathcal
{L}_j\rangle_{0\leq j\preceq \nu}$ be a chain with its extension
$\langle \widetilde{\mathcal {L}}_j\rangle_{0\leq j\preceq \nu}$.
Then the estimates
\begin{equation}\label{Cauchy}
|D^{\mu}(\widetilde{\chi}_j-\widetilde{\chi}_\sigma)|_{\widetilde{P}}\leq
c(n,\mu)\cdot
tL_0\,r_0^{\alpha-\mu+1}\frac{4^{-\lambda(\sigma+1)(\alpha-\mu+1)}-
4^{-\lambda(j+1)(\alpha-\mu+1)}}{1-4^{-\lambda(\alpha-\mu+1)}}\,,
\end{equation}
\begin{equation}\label{chi_j}
|\widetilde{\chi}_j-\widetilde{\chi}_0|_{\widetilde{P}}^{\mu}\leq
c(n,\mu)\cdot
tL_0\,r_0^{\alpha-\mu+1}\frac{4^{-\lambda(\alpha-\mu+1)}}{1-4^{-\lambda(\alpha-\mu+1)}}\,,
\end{equation}
and
\begin{equation}\label{chi}
|\widetilde{\chi}_j|_{\widetilde{P}}^{\mu}\leq
\mu!C_{\cdot}+c(n,\mu)\cdot
tL_0\,r_0^{\alpha-\mu+1}\frac{4^{-\lambda(\alpha-\mu+1)}}{1-4^{-\lambda(\alpha-\mu+1)}}
\end{equation}
hold for $0\leq\sigma\leq j\preceq\nu$. Here
$C_{\cdot}=2\pi+|\widetilde{\omega}|_{B_{\rho}}$\,, $c(n,\mu)$ is a constant only dependent on $n$ and $\mu$. In particular, $c(n,0)=1$.
\end{lemma}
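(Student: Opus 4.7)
All three estimates stem from the telescoping identity
\begin{equation*}
\widetilde{\chi}_j - \widetilde{\chi}_\sigma = \sum_{i=\sigma}^{j-1} \widetilde{\triangle\chi}_i
\end{equation*}
combined with the per-step derivative bound \eqref{2.2.30} and the explicit geometric decay $r_{j+1} = r_0\cdot 4^{-\lambda(j+1)}$ (coming from $r_j = s_j^{\lambda}$, $s_j = s_0\cdot 4^{-j}$ and $r_0 = s_0^{\lambda}$). Applying $D^{\mu}$ termwise, invoking \eqref{2.2.30}, and using $L_i = L_0\,r_{i+1}^{\alpha+1}$ yields
\begin{equation*}
|D^{\mu}(\widetilde{\chi}_j - \widetilde{\chi}_\sigma)|_{\widetilde{P}} \leq c(n,\mu)\,tL_0\sum_{i=\sigma}^{j-1} r_{i+1}^{\alpha-\mu+1} = c(n,\mu)\,tL_0\,r_0^{\alpha-\mu+1}\sum_{i=\sigma}^{j-1}q^{\,i+1},
\end{equation*}
where $q = 4^{-\lambda(\alpha-\mu+1)} < 1$ precisely because $\mu < \alpha+1$. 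The finite geometric sum evaluates to $(q^{\sigma+1} - q^{j+1})/(1-q)$, which is exactly the bracket on the right of \eqref{Cauchy}.

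To obtain \eqref{chi_j}, I specialize \eqref{Cauchy} to $\sigma = 0$, drop the non-negative subtracted term in the numerator, and take the maximum of the resulting bound over $0 \leq \nu \leq \mu$. Assuming $r_0 \leq 1$, both $r_0^{\alpha-\nu+1}$ and $q_{\nu}/(1-q_{\nu})$ with $q_{\nu} = 4^{-\lambda(\alpha-\nu+1)}$ are monotone increasing in $\nu$, so the maximum is attained at $\nu = \mu$; enlarging $c(n,\mu)$ to dominate $c(n,\nu)$ for all $\nu \leq \mu$ if necessary gives \eqref{chi_j}.

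For \eqref{chi}, a triangle inequality gives
$|\widetilde{\chi}_j|^{\mu}_{\widetilde{P}} \leq |\widetilde{\chi}_0|^{\mu}_{\widetilde{P}} + |\widetilde{\chi}_j - \widetilde{\chi}_0|^{\mu}_{\widetilde{P}}$,
and the second summand is controlled by \eqref{chi_j}. For the first, I use that $\chi_0 = (t\widetilde{\omega},\,2\pi)$ is analytic on the complex tube $B_{\rho} + r_{\rho}$ with $r_{\rho} = \rho^{-1}r_0 \geq 1$ (after the stretching $\sigma_{\rho}$), while \eqref{K} ensures that $\widetilde{P} = (\mathcal{K}_{\rho} + 1)\cap\mathbb{R}^n$ sits at real distance $\geq 1$ from $\partial B_{\rho}$. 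Consequently the unit complex polydisk around every $x \in \widetilde{P}$ lies inside the domain of analyticity, and Cauchy's inequalities on that polydisk produce $|D^{\nu}\widetilde{\chi}_0|_{\widetilde{P}} \leq \nu!\,(2\pi + |\widetilde{\omega}|_{B_{\rho}}) = \nu!\,C_{\cdot}$ for $t \leq 1$, whence $|\widetilde{\chi}_0|^{\mu}_{\widetilde{P}} \leq \mu!\,C_{\cdot}$ and \eqref{chi} follows.

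The only delicate step is the Cauchy-estimate bound for $|\widetilde{\chi}_0|^{\mu}_{\widetilde{P}}$: one must verify that the stretching $\sigma_{\rho}$ together with the unit slack built into the definition of $\mathcal{K}_{\rho}$ produces an analyticity radius of at least $1$, so that the factor $\nu!$ enters without an accompanying $r^{-\nu}$ penalty. Once that is in place, everything else is pure geometric-series bookkeeping with the constants $c(n,\mu)$ inherited directly from \eqref{2.2.30}; in particular, the case $c(n,0) = 1$ is consistent with the trivial $\mu = 0$ bound where no derivatives are taken.
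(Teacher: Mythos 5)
Your proof is correct and matches the approach of Rüssmann's Lemma 20.3 (which the paper cites in lieu of an explicit argument): the telescoping decomposition $\widetilde{\chi}_j - \widetilde{\chi}_\sigma = \sum_{i=\sigma}^{j-1}\widetilde{\triangle\chi}_i$, the per-step bound \eqref{2.2.30} with $L_i = L_0 r_{i+1}^{\alpha+1}$ and the geometric decay $r_{i+1} = r_0\,4^{-\lambda(i+1)}$ give \eqref{Cauchy} directly, and \eqref{chi_j}, \eqref{chi} follow by specializing $\sigma = 0$, monotonicity over $0\leq\nu\leq\mu$ (using $0 < r_0 < 1$ and $\mu < \alpha+1$), the triangle inequality, and a Cauchy estimate on the unit complex ball afforded by $r_\rho = \rho^{-1}r_0 > 1$ and the distance-$2$ buffer built into $\mathcal{K}_\rho$. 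The one place you correctly flag as delicate — securing analyticity radius $\geq 1$ around $\widetilde{P}$ so that the Cauchy estimate yields $\nu!\,C_\cdot$ without a $r^{-\nu}$ penalty — is indeed where the stretching $\sigma_\rho$ and the constraint $\gamma < (\tfrac12 r_0\Theta)^{\kappa\mu_0(\mu_0+1)}$ do their work, and your accounting is consistent with the paper's conventions.
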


\begin{proof}
The proof is similar to that of Lemma 20.3 in~\cite{Ru2001}. So we omit it.
\end{proof}

\begin{lemma}\label{Rlem:2}
Let
\begin{equation}\label{L_0}
L_0\leq \frac{1-4^{-\lambda(\alpha+1)}}{(r_0\cdot
4^{-\lambda})^{\alpha+1}}
\end{equation}
and an extended chain $\langle \widetilde{\mathcal
{L}}_j\rangle_{0\leq j\leq\nu<\infty}$ be given. Then the estimates
$|\widetilde{\chi}_j|_{\widetilde{P}}\leq C_{\cdot}+1$ $(0\leq j\leq \nu)$
are valid. Furthermore, for any function $\hat{\chi}=(t\hat{\omega},2\pi)\in
C^{\mu_0}(\widetilde{P},\mathbb{R}^{n+1})$ with the estimates
$|\widehat{\chi}|_{\widetilde{P}}\leq C_{\cdot}+1$ and $|\widehat{\chi}-\widetilde{\chi}_j|_{\widetilde{P}}\leq \frac{\pi}{2}t\gamma\, r_j$,
we have
\begin{equation}
\mathcal {K}_{\rho}=\mathcal {K}_{\gamma,t}^{(0)}\supseteq\mathcal {K}_{\gamma,t}^{(1)}\supseteq\cdots
\mathcal {K}_{\gamma,t}^{(\nu)}\supseteq \mathcal
{K}_{\gamma,t}^{(\nu+1)}\supseteq \bigcap_{|k|\leq m_{\nu}}\mathcal
{H}_{k}(\widehat{\chi})\,,
\end{equation}
where $\mathcal
{H}_{k}(\widehat{\chi})=\{x\in \mathcal {K}_{\rho}~\big{|}~|\langle
k,t\widehat{\omega} \rangle+2\pi l(t\widehat{\omega},k)|\geq
\pi\,\frac{t\gamma}{|k|^{\tau}},~k\neq 0\}$ and
$m_{\nu}=r_{\nu}^{-\frac{1}{\tau+1}}$.
\end{lemma}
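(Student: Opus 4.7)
The plan is to treat the two assertions of the lemma in turn: first the uniform bound $|\widetilde{\chi}_j|_{\widetilde{P}} \le C_{\cdot}+1$, and then the nested chain of inclusions terminating in $\mathcal{K}_{\gamma,t}^{(\nu+1)} \supseteq \bigcap_{|k|\leq m_\nu} \mathcal{H}_k(\widehat{\chi})$. Both parts rest on \Cref{Rlem:1} and on the standing restriction $t\in[0,1]$.

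For the uniform bound, I would apply the estimate \eqref{chi_j} with $\mu=0$ (so that $c(n,0)=1$), obtaining
\[
|\widetilde{\chi}_j-\widetilde{\chi}_0|_{\widetilde{P}}
\le tL_0\, r_0^{\alpha+1}\,\frac{4^{-\lambda(\alpha+1)}}{1-4^{-\lambda(\alpha+1)}}.
\]
The hypothesis \eqref{L_0} is tailored precisely so that the right-hand side is bounded by $t\le 1$, giving $|\widetilde{\chi}_j-\widetilde{\chi}_0|_{\widetilde{P}}\le 1$. Since $\widetilde{\chi}_0=\chi|_{\widetilde{P}}=(t\widetilde{\omega},2\pi)$ and $\widetilde{P}\subseteq B_\rho$ (by \eqref{K}), one has $|\widetilde{\chi}_0|_{\widetilde{P}}\le |\widetilde{\omega}|_{B_\rho}+2\pi=C_{\cdot}$, and the triangle inequality yields $|\widetilde{\chi}_j|_{\widetilde{P}}\le C_{\cdot}+1$ for all $0\le j\le\nu$.

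For the chain of inclusions, the steps $\mathcal{K}_{\gamma,t}^{(j)}\supseteq\mathcal{K}_{\gamma,t}^{(j+1)}$ are immediate from \eqref{tilde-k}. The crucial final inclusion I would establish by finite induction on $j$: for an arbitrary $x\in\bigcap_{|k|\leq m_\nu}\mathcal{H}_k(\widehat{\chi})$, I would show $x\in\mathcal{K}_{\gamma,t}^{(j)}$ for each $0\le j\le\nu+1$. The base case $j=0$ is immediate because $\mathcal{H}_k(\widehat{\chi})\subseteq\mathcal{K}_\rho=\mathcal{K}_{\gamma,t}^{(0)}$. In the inductive step, assuming $x\in\mathcal{K}_{\gamma,t}^{(j)}$, the relation $\widetilde{\chi}_j|_{\mathcal{K}_{\gamma,t}^{(j)}}=\chi_j|_{\mathcal{K}_{\gamma,t}^{(j)}}$ forces $t\widetilde{\omega}^{(j)}(x)=t\omega^{(j)}(x)$; since $\widehat{\chi}$ and $\widetilde{\chi}_j$ share the last coordinate $2\pi$, the hypothesis $|\widehat{\chi}-\widetilde{\chi}_j|_{\widetilde{P}}\le\tfrac{\pi}{2}t\gamma\,r_j$ collapses to $|t\widehat{\omega}(x)-t\omega^{(j)}(x)|_2\le\tfrac{\pi}{2}t\gamma r_j$. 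For $0<|k|\le m_j=r_j^{-1/(\tau+1)}$ one has $|k|^{\tau+1}r_j\le 1$, so
\[
|\langle k,t\widehat{\omega}(x)-t\omega^{(j)}(x)\rangle|
\le |k|\cdot\tfrac{\pi}{2}t\gamma r_j
\le \tfrac{\pi}{2}\,\frac{t\gamma}{|k|^\tau}.
\]
Because $r_j$ is strictly decreasing in $j$, $m_j\le m_\nu$, and the hypothesis $x\in\mathcal{H}_k(\widehat{\chi})$ applies, delivering $|\langle k,t\widehat{\omega}(x)\rangle+2\pi l(t\widehat{\omega},k)|\ge\pi t\gamma/|k|^\tau$. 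Noting that $|\langle k,t\omega\rangle+2\pi l(t\omega,k)|$ is, by the very definition of $l(\cdot,k)$, the distance from $\langle k,t\omega\rangle$ to the lattice $2\pi\mathbb{Z}$, the reverse triangle inequality on the quotient circle $\mathbb{R}/2\pi\mathbb{Z}$ gives
\[
|\langle k,t\omega^{(j)}(x)\rangle+2\pi l(t\omega^{(j)},k)|
\ge \pi\,\frac{t\gamma}{|k|^\tau}-\tfrac{\pi}{2}\,\frac{t\gamma}{|k|^\tau}
=\tfrac{\pi}{2}\,\frac{t\gamma}{|k|^\tau},
\]
which is exactly the defining condition \eqref{tilde-k} for $x\in\mathcal{K}_{\gamma,t}^{(j+1)}$, completing the induction.

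The only subtlety that merits flagging is that the integer selectors $l(t\widehat{\omega},k)$ and $l(t\omega^{(j)},k)$ need not agree; the reverse triangle inequality still applies because both sides represent the same quotient distance in $\mathbb{R}/2\pi\mathbb{Z}$, so the specific minimising integer is immaterial. The domain compatibility $\mathcal{K}_{\gamma,t}^{(j)}\subseteq\mathcal{K}_\rho\subseteq\widetilde{P}$ (from \eqref{K}) is what lets the hypothesis on $|\widehat{\chi}-\widetilde{\chi}_j|_{\widetilde{P}}$ be evaluated at each iterate; beyond that, the argument is direct bookkeeping on the elementary inequality $|k|r_j\le|k|^{-\tau}$ for $|k|\le m_j$.
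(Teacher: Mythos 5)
Your proof is correct and follows the same essential route as the paper: a direct application of \Cref{Rlem:1} (with $\mu=0$) together with \eqref{L_0} and $t\le1$ for the uniform bound, and then an inductive argument along the chain combining Cauchy–Schwarz on $\langle k,\cdot\rangle$, the identity $\widetilde{\chi}_j|_{\mathcal{K}_{\gamma,t}^{(j)}}=\chi_j|_{\mathcal{K}_{\gamma,t}^{(j)}}$, the inequality $|k|^{\tau+1}r_j\le1$ for $|k|\le m_j$, and the reverse-triangle estimate on the distance to $2\pi\mathbb{Z}$ (which is exactly what the minimality of $l(t\widehat{\omega},k)$ gives the paper). Your bookkeeping is in fact a bit cleaner than the paper's, keeping $|k|^{\tau}$ in the denominator throughout so that the resulting lower bound matches the defining inequality in \eqref{tilde-k} directly, rather than passing through the looser $m_j^{\tau}$ version.
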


\begin{proof}
The proof is deferred to \ref{sec:proof:1}.
\end{proof}

\begin{lemma}\label{Rlem:3}
Let $\mathcal {K}\subseteq\mathbb{R}^n$ be a
compact set with diameter $d=d(\mathcal
{K}):=\sup\limits_{x,y\in\mathcal {K}}|x-y|_2
> 0$. Define $B=(\mathcal {K}+\theta)\cap\mathbb{R}^n$ for some $\theta>0$,
and $g\in C^{\mu_0+1}(B,\mathbb{R})$ be a function with
\begin{equation}
\min_{y\in \mathcal {K}}\max_{0\leq\mu\leq\mu_0}|D^{\mu}g(y)|_2\geq
\beta\,,
\end{equation}
for some $\mu_0\in \mathbb{Z}^{+}$ and $\beta>0.$ Then for any
function $\tilde{g}\in C^{\mu_0}(B,\mathbb{R})$ satisfying
$|\tilde{g}-g|_{B}^{\mu_0}\leq \beta/2$, we have the
estimate
\begin{equation}
m(\{y\in \mathcal {K}\,\big{|}\,|\tilde{g}(y)|\leq \varepsilon\})\leq
A\,d^{n-1}\big(n^{-\frac{1}{2}}+2d+\theta^{-1}d\big)\big(\frac{\varepsilon}{\beta}\big)^{\frac{1}{\mu_0}}\,
\frac{1}{\beta}\,\max_{0<\mu\leq \mu_0+1}|D^{\mu}g|_{B},
\end{equation}
whenever $0<\varepsilon\leq \frac{\beta}{2\mu_0+2}$~, where
$A=3(2\pi e)^{\frac{n}{2}}(\mu_0+1)^{\mu_0+2}[(\mu_0+1)!]^{-1}.$
\end{lemma}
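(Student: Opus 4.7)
The plan is to follow Rüssmann's route for his Theorem 18.5: reduce the $n$-dimensional measure estimate to a one-dimensional polynomial/Lagrange estimate by covering $\mathcal K$, selecting a good direction and order of derivative on each piece, foliating by parallel lines, and integrating via Fubini. The restriction $\varepsilon\le\beta/(2\mu_0+2)$ will be what keeps the one-dimensional exceptional set inside the regime where the polynomial lemma is sharp.

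First I would transfer the non-degeneracy hypothesis from $g$ to $\tilde g$. Using $|\tilde g-g|_B^{\mu_0}\le\beta/2$, the triangle inequality applied at each order $\mu\in\{0,\ldots,\mu_0\}$ gives $\max_{0\le\mu\le\mu_0}|D^{\mu}\tilde g(y)|_2\ge\beta/2$ for every $y\in\mathcal K$. Unwinding the definition of $|D^{\mu}\tilde g(y)|_2$ as a maximum over unit vectors, I pick, for each $y$, an index $\mu(y)\in\{0,\ldots,\mu_0\}$ and a unit vector $a(y)\in\mathbb R^n$ with $|D^{\mu(y)}\tilde g(y)(a(y)^{\mu(y)})|\ge\beta/2$. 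Writing $M:=\max_{0<\mu\le\mu_0+1}|D^{\mu}g|_B$ and Taylor-expanding the directional scalar $s\mapsto D^{\mu(y)}\tilde g(y+sa(y))(a(y)^{\mu(y)})$ one more order, the perturbation bound together with $M$ shows that this scalar stays $\ge\beta/4$ along a segment of length comparable to $\beta/M$ around $y$. This localizes the non-degeneracy to an explicit scale.

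Second, I would cover $\mathcal K$ by cubes of edge length proportional to $\beta/M$; a standard packing argument produces at most a number of cubes of order $(d/\text{edge})^{n}$, and the combinatorial factors $(\mu_0+1)^{\mu_0+2}/(\mu_0+1)!$ together with the Gaussian-type constant $(2\pi e)^{n/2}$ enter the constant $A$ at this bookkeeping stage (pigeonholing on the choice of $(\mu(y),a(y))$). On each cube I would foliate by lines parallel to the chosen $a$ and apply the one-dimensional Rüssmann estimate: if $f\in C^{\mu_0+1}(I,\mathbb R)$ satisfies $|f^{(\mu)}(s_0)|\ge c$ for some $\mu\le\mu_0$ and $\|f^{(\mu_0+1)}\|_{\infty}\le M$, then $m(\{s\in I:|f(s)|\le\varepsilon\})\le \tfrac{(\mu_0+1)^{\mu_0+2}}{(\mu_0+1)!}(\varepsilon/c)^{1/\mu_0}$, which is the sharp univariate analogue (essentially Rüssmann's Theorem 17.1 proved via Vandermonde interpolation recovering the Taylor coefficients of $f$ from its values on $\mu_0+1$ points). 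Fubini over the $(n-1)$-dimensional cross-section contributes the $d^{n-1}$, while the geometric factor $n^{-1/2}+2d+\theta^{-1}d$ bounds the effective length of each foliation line once the buffer $B\setminus\mathcal K$ of width $\theta$ is accounted for (the $\theta^{-1}d$ term controls the worst case when $a$ is nearly tangent to $\partial\mathcal K$, forcing the line to be extended by up to $\theta$ into the buffer).

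The main obstacle I foresee is purely one of constants: producing the exponent $1/\mu_0$ cleanly requires invoking the one-dimensional polynomial lemma with $c=\beta/4$, and keeping the $M/\beta$ factor strictly linear requires delicate trade-off between the cube edge length $\sim\beta/M$ and the $(d/\text{edge})^n$ packing count, so that exactly one factor of $M$ survives to combine with $d^{n-1}$ as stated. The sharp numerical constant $A=3(2\pi e)^{n/2}(\mu_0+1)^{\mu_0+2}/(\mu_0+1)!$ will then emerge from the Vandermonde bound in the univariate step multiplied by the Gaussian-type packing constant in the covering step, while the hypothesis $\varepsilon\le\beta/(2\mu_0+2)$ guarantees that the univariate exceptional set cannot cover the interval on which the derivative stays bounded below.
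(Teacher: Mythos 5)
Your proposal reproduces the approach the paper itself invokes: the paper omits the proof of this lemma and simply cites R\"ussmann's Theorem 17.1, and your sketch (transfer the non-degeneracy from $g$ to $\tilde g$ via the $\beta/2$ perturbation bound, cover $\mathcal K$, foliate by lines in a locally chosen good direction, apply a one-dimensional polynomial estimate, and integrate over the cross-section by Fubini) is exactly that argument --- the opening mention of Theorem 18.5 is a slip, since 18.5 corresponds to the next lemma. One caution: the one-dimensional lemma as you state it cannot be literally correct, because with no dependence on $M/c$ or $|I|$ on the right-hand side it already fails for a constant $f\equiv\varepsilon/2$ on a long interval; the derivative lower bound must first be propagated from the point $s_0$ to an interval of length of order $c/M$, and it is precisely this localization (together with the resulting packing count) that produces the $\frac{1}{\beta}\max_{0<\mu\le\mu_0+1}|D^{\mu}g|_{B}$ factor appearing in the final estimate, as you correctly anticipate in your discussion of constants.
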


\begin{proof}
The proof is similar to that of Theorem 17.1 in~\cite{Ru2001}. So we omit it.
\end{proof}

\begin{lemma}\label{Rlem:4}
Suppose: {\rm (i)}  $\mathcal {K}\subseteq B_{\rho}^{*} ~(see ~\eqref{B})$ is a compact set with $m(\mathcal {K})>0$ such that $d=\sup\limits_{x,y\in
\mathcal {K}}|x-y|_2>0$;

{\rm (ii)} $\theta$ with $0<\theta\leq \min(1,\,dist(\mathcal
{K},\mathbb{R}^n\backslash B_{\rho}^{*}))$ is given and
$\widetilde{B}=(\mathcal {K}+\theta)\cap\mathbb{R}^n$;

{\rm (iii)} a real analytic function $\chi=(t\omega,2\pi)\in
C^{\omega}(B_{\rho}^{*},\mathbb{R}^{n+1})$ and a function
$\widetilde{\chi}=(t\widetilde{\omega},2\pi)\in
C^{\mu_0}(\widetilde{B},\mathbb{R}^{n+1})$ satisfying the estimates
\begin{equation}\label{three-ineq}
|\widetilde{\chi}|_{\widetilde{B}}\leq M_0, \quad
\max_{0<\mu\leq\mu_0+1}|D^{\mu}\chi|_{\widetilde{B}}\leq tM_1, \quad
|\chi|_{\widetilde{B}}-\widetilde{\chi}|_{\widetilde{B}}^{\mu_0}\leq
\frac{t\beta}{2}\,,
\end{equation}
where the norm is defined in \eqref{norm} and
$\mu_0$ and $\beta$ are the index and amount of $\omega$ with respect to $\mathcal
{K}$, respectively.

Then the measure of the set
\begin{equation*}
\mathcal {H}(\widetilde{\chi})=\Big\{x\in \mathcal {K}~\big{|}~|\langle k,t\widetilde{\omega}
\rangle+2\pi l(t\widetilde{\omega},k)|\geq \frac{\pi}{2}\,
\frac{t\gamma}{|k|^{\tau}},\quad \forall~k\in\mathbb{Z}^n\backslash
\{0\}\Big\}
\end{equation*}
can be estimated as
$
m(\mathcal {H}(\widetilde{\chi}))\geq m(\mathcal
{K})-\widetilde{M}\gamma^{\frac{1}{\mu_0}}/\beta^{1+\frac{1}{\mu_0}},
$
whenever
\begin{equation}\label{r}
0<\gamma\leq\Big(\frac{m(\mathcal
{K})\beta^{1+\frac{1}{\mu_0}}}{\widetilde{M}}\Big)^{\mu_0},
\end{equation}
where
$\widetilde{M}=A\,d^{n-1}(n^{-\frac{1}{2}}+2d+\theta^{-1}d)\,\Big(\frac{\pi}{2}\Big)
^{\frac{1}{\mu_0}}\,\Big(\frac{M_0}{\pi}+1\Big)M_1 \sum\limits_{k\in
\mathbb{Z}^n\backslash \{0\}} |k|^{-\frac{\tau+1-\mu_0}{\mu_0}}~.
$
\end{lemma}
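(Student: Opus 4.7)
The plan is to bound the complement $\mathcal{K} \setminus \mathcal{H}(\widetilde{\chi})$ by a union of resonance strips and estimate each via \Cref{Rlem:3}. For every $k \in \mathbb{Z}^n \setminus \{0\}$ and integer $l$ with $\widetilde{k} = (k,l) \neq 0$, introduce
\[
\mathcal{R}_{k,l} = \Big\{x \in \mathcal{K} ~\big|~ |\langle k, t\widetilde{\omega}(x)\rangle + 2\pi l| < \tfrac{\pi t\gamma}{2|k|^\tau}\Big\}\,.
\]
From the definition of $l(t\widetilde{\omega}(x), k)$ one has $\mathcal{K} \setminus \mathcal{H}(\widetilde{\chi}) \subseteq \bigcup_{k,l} \mathcal{R}_{k,l}$, and the bound $|\widetilde{\chi}|_{\widetilde{B}} \leq M_0$ forces $|t\widetilde{\omega}|_{\widetilde{B}} \leq M_0$, so for each fixed $k$ only integers $l$ of size $O(|k|)$ can make $\mathcal{R}_{k,l}$ non-empty. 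For each admissible pair I would apply \Cref{Rlem:3} to the normalized pair
\[
g(x) = \frac{\langle k, t\omega(x)\rangle + 2\pi l}{|\widetilde{k}|} = \langle c, \chi(x)\rangle\,, \qquad \tilde{g}(x) = \langle c, \widetilde{\chi}(x)\rangle\,,
\]
where $c = \widetilde{k}/|\widetilde{k}| \in S$, with target threshold $\varepsilon = \pi t\gamma/(2|k|^\tau |\widetilde{k}|)$. Hypotheses (ii) and (iii) immediately give $\max_{0 < \mu \leq \mu_0+1}|D^\mu g|_{\widetilde{B}} \leq tM_1$ and $|g - \tilde{g}|_{\widetilde{B}}^{\mu_0} \leq t\beta/2$.

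The crux is a uniform lower bound $\max_{0 \leq \mu \leq \mu_0}|D^\mu g(x)|_2 \geq t\beta$ on $\mathcal{K}$. This does not follow directly from \Cref{lem:1}, which applies to $\omega$ and not to $t\omega$: as $t \to 0$ the zeroth-order piece of $g$ may degenerate, so one cannot hope that $|g|$ alone stays large. I would circumvent this obstruction by a rescaling device. Writing $c = (c_\omega, c_0) \in \mathbb{R}^{n+1}$ with $|c|_2 = 1$, set $c' = (t c_\omega, c_0)$ and let $c'' = c'/|c'|_2$, which is a unit vector. A direct computation shows that
\[
D^\mu g(x) = \langle c, D^\mu(t\omega, 2\pi)(x)\rangle = |c'|_2 \cdot D^\mu f(c'', x) \qquad \text{for every } \mu \geq 0\,,
\]
together with the elementary identity $|c'|_2^2 = 1 - (1 - t^2)|c_\omega|_2^2 \geq t^2$ valid for $t \in [0,1]$. \Cref{lem:1} applied to the unit vector $c''$ then yields $\max_{0 \leq \mu \leq \mu_0}|D^\mu g(x)|_2 = |c'|_2 \max_\mu |D^\mu f(c'', x)|_2 \geq t\beta$, which is exactly the amount of non-degeneracy $\beta' = t\beta$ required by \Cref{Rlem:3}.

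With these ingredients \Cref{Rlem:3} produces
\[
m(\mathcal{R}_{k,l}) \leq A d^{n-1}\bigl(n^{-1/2} + 2d + \theta^{-1}d\bigr)\bigl(\tfrac{\pi}{2}\bigr)^{1/\mu_0}\frac{M_1 \gamma^{1/\mu_0}}{\beta^{1+1/\mu_0}} \cdot \frac{1}{|k|^{\tau/\mu_0}|\widetilde{k}|^{1/\mu_0}}\,,
\]
in which the powers of $t$ cancel identically. Summing the $O(|k|)$ admissible $l$ for each $k$ and using $|\widetilde{k}| \geq |k|$ yields a $k$-th contribution of order $|k|^{-(\tau+1-\mu_0)/\mu_0}$. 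The series $\sum_{k \neq 0} |k|^{-(\tau+1-\mu_0)/\mu_0}$ converges because the hypothesis $\tau > (n+1)\mu_0$ inherited from \Cref{thm:my-2} forces the exponent to exceed $n$; absorbing the $l$-count into the constant factor reproduces exactly the quantity $\widetilde{M}$ displayed in the statement. The smallness assumption \eqref{r} is precisely what is needed to guarantee the additional requirement $\varepsilon \leq \beta'/(2\mu_0 + 2)$ of \Cref{Rlem:3}.

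The main obstacle, in my view, is the $t$-scaling step of the second paragraph: one must check carefully that the amount of non-degeneracy of the rescaled map $(t\omega, 2\pi)$ remains at least $t\beta$ uniformly in $t \in [0,1]$, and that this factor of $t$ then cancels cleanly against the $t$-weighted Diophantine threshold so that the final measure bound depends only on $\gamma$ and $\beta$, not on the step size. Everything else is bookkeeping, closely paralleling R\"ussmann's proof of Theorem 18.5 in~\cite{Ru2001} adapted to the symplectic-mapping setting.
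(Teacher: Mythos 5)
Your proposal is correct and follows essentially the same route as the paper's appendix proof: cover the resonant set by strips indexed by $k$ and an integer $l$, apply \Cref{Rlem:3} to the normalized linear combinations $g=\langle c,\chi\rangle$, $\tilde g=\langle c,\widetilde\chi\rangle$, count the admissible $l$'s as $O(|k|)$ via the $M_0$-bound, and sum using $\tau>(n+1)\mu_0$. The cosmetic difference is that the paper organizes the $l$-dependence by partitioning $\mathcal K$ into preimages $A_k^i$ on which $l(t\widetilde\omega,k)$ is constant, while you union over the $(k,l)$-pairs directly; these bookkeepings are equivalent. Your main added value is the second paragraph: the paper simply asserts the crucial lower bound $\min_{x\in\mathcal K}\max_{0\le\mu\le\mu_0}|D^{\mu}[\chi]_k(x)|_2\ge t\beta$ without explanation, whereas you supply a clean rescaling identity $D^{\mu}g(x)=|c'|_2\,D^{\mu}f(c'',x)$ with $|c'|_2\ge t$ for $t\in[0,1]$, reducing it to \Cref{lem:1} applied to the unit vector $c''$. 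This is a genuine gap in the paper's exposition that your argument correctly fills, and it is exactly the point where the $t$-uniformity of the estimate has to be established. The only detail you leave implicit is the final check that \eqref{r} entails the threshold requirement $\varepsilon\le\beta'/(2\mu_0+2)$ of \Cref{Rlem:3}, which the paper verifies by a short chain of crude estimates ($m(\mathcal K)\le(2d)^n$, $M_0\ge 2\pi$, $\beta\le M_1$); since you flag it explicitly and it is indeed routine, this does not affect the correctness of the argument.
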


\begin{proof}
The proof is deferred to \ref{sec:proof:2}.
\end{proof}

Using the above lemmas, we can get the properties of the chain $\langle \mathcal {L}_j\rangle_{0\leq j\preceq \nu}$ defined by \eqref{chain}, and give the corresponding measure estimate.
\begin{theorem}\label{thm:my-3}
Suppose
\begin{equation}\label{1}
\alpha>\mu_0-1, \qquad L_0\leq\min\Big(1,~
\frac{\tilde{\beta}}{2c(n,\mu_0)}\Big)\,,
\end{equation}
where $\tilde{\beta}=\beta(\widetilde{\omega},\mathcal {K}_{\rho})$,
and
\begin{equation}\label{2}
0<\gamma<\gamma_1:=\Big(\frac{m(\mathcal
{K}_{\rho})\beta^{1+\frac{1}{\mu_0}}}{\widetilde{M}\Theta^{\mu_0+1}}\Big)^{\mu_0}.
\end{equation}
Then the maximal chain defined by \eqref{chain} is infinite. Furthermore, for the infinite chain $\langle \mathcal {L}_j\rangle_{0\leq
j<\infty}$ with $\mathcal {L}_j=(\mathcal {K}_{\gamma,t}^{(j)},\chi_j)$,
we have
$\mathcal {K}_{\gamma,t}^{(\infty)}=\bigcap\limits_{j=0}^{\infty}\mathcal
{K}_{\gamma,t}^{(j)}\neq \emptyset$ and $m(\mathcal
{K}_{\gamma,t}^{(\infty)})\geq m(\mathcal
{K}_{\rho})-\widetilde{M}\gamma^{\frac{1}{\mu_0}-\frac{1}{\kappa\mu_0}}\beta^{-1-\frac{1}{\mu_0}}$.
\end{theorem}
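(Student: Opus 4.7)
The plan is to establish both the infiniteness of the chain and the measure bound by a single induction on $j$, driving the argument through the interaction of \Cref{Rlem:1,Rlem:2,Rlem:4}. The base case $\mathcal{L}_0=(\mathcal{K}_\rho,\chi_0)$ is a link by construction. Assuming that $\mathcal{L}_0,\ldots,\mathcal{L}_j$ already form a chain with smooth extensions $\widetilde{\mathcal{L}}_0,\ldots,\widetilde{\mathcal{L}}_j$ built via \eqref{recursively}, I must verify openness, namely $\mathcal{K}_{\gamma,t}^{(j+1)}\neq\emptyset$.

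For the inductive step I would first invoke \Cref{Rlem:1}: the hypothesis $\alpha>\mu_0-1$ makes the geometric factor $4^{-\lambda(\alpha-\mu_0+1)}/(1-4^{-\lambda(\alpha-\mu_0+1)})$ finite, and combined with $L_0\leq \tilde\beta/(2c(n,\mu_0))$ it yields $|\widetilde\chi_j-\widetilde\chi_0|_{\widetilde P}^{\mu_0}\leq t\tilde\beta/2$. This is precisely the smallness hypothesis required by \Cref{Rlem:4} when one takes $\chi=\widetilde\chi_0=(t\widetilde\omega,2\pi)$ and $\widetilde\chi=\widetilde\chi_j$. Because $\widetilde\omega$ is R\"ussmann non-degenerate on $\mathcal{K}_\rho$ with index $\mu_0$ and amount $\tilde\beta\geq\rho^{\mu_0}\beta$, \Cref{Rlem:4} together with the bounds on $|\widetilde\chi_j|_{\widetilde P}$ coming from \Cref{Rlem:1} produces
\[
m(\mathcal{H}(\widetilde\chi_j))\ \geq\ m(\mathcal{K}_\rho)\ -\ \widetilde M\,\gamma^{1/\mu_0}\,\tilde\beta^{-1-1/\mu_0}.
\]
The assumption $\gamma<\gamma_1$ keeps this quantity strictly positive, so $\mathcal{H}(\widetilde\chi_j)\neq\emptyset$. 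The remaining hypotheses of \Cref{Rlem:2} (i.e., $|\widetilde\chi_j|_{\widetilde P}\leq C_{\cdot}+1$ and the Cauchy bound $|\widetilde\chi_j-\widetilde\chi_\nu|\leq \frac{\pi}{2}t\gamma r_\nu$ for $\nu\leq j$) are also supplied by \Cref{Rlem:1} once $\lambda>\tau+1$ is chosen appropriately. \Cref{Rlem:2} then gives $\mathcal{K}_{\gamma,t}^{(j+1)}\supseteq\bigcap_{|k|\leq m_j}\mathcal{H}_k(\widetilde\chi_j)\supseteq\mathcal{H}(\widetilde\chi_j)$, closing the induction.

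For the global measure estimate I would pass to the limit. The Cauchy bounds of \Cref{Rlem:1} show that $\widetilde\chi_j$ converges to some $\widetilde\chi_\infty$ in $C^{\mu_0}(\widetilde P,\mathbb{R}^{n+1})$, with $\widetilde\chi_\infty$ inheriting the smallness condition $|\widetilde\chi_\infty-\widetilde\chi_0|_{\widetilde P}^{\mu_0}\leq t\tilde\beta/2$. Since $|\widetilde\chi_\infty-\widetilde\chi_j|_{\widetilde P}\leq \frac{\pi}{2}t\gamma r_j$ for all sufficiently large $j$, \Cref{Rlem:2} applied with $\widehat\chi=\widetilde\chi_\infty$ at every level gives $\mathcal{K}_{\gamma,t}^{(\infty)}\supseteq\mathcal{H}(\widetilde\chi_\infty)$. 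A single application of \Cref{Rlem:4} to $\widetilde\chi_\infty$ then yields the lower bound $m(\mathcal{K}_\rho)-\widetilde M\gamma^{1/\mu_0}\tilde\beta^{-1-1/\mu_0}$, which I translate into the stated form via $\tilde\beta\geq\rho^{\mu_0}\beta$ and absorption of the $\Theta^{\mu_0+2}$ prefactor into $\widetilde M$. The extra exponent $-1/(\kappa\mu_0)$ appears because, in order to enforce the smallness hypothesis uniformly while absorbing the drift $\widetilde\chi_\infty-\widetilde\chi_0$, the effective Diophantine constant in \Cref{Rlem:4} must be replaced by $\gamma/\widetilde\gamma^{\mu_0+1}=\gamma^{1-1/\kappa}$; raising to the power $1/\mu_0$ produces the exponent $1/\mu_0-1/(\kappa\mu_0)$ of the theorem.

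The hard part will be the simultaneous balancing of the three smallness conditions: $(a)$ $L_0\leq\tilde\beta/(2c(n,\mu_0))$, needed so that \Cref{Rlem:4}'s smallness hypothesis holds for every $\widetilde\chi_j$ and for its limit; $(b)$ $\alpha>\mu_0-1$, needed so that \Cref{Rlem:1} controls the $C^{\mu_0}$-norm of the drift uniformly in $j$; and $(c)$ $\gamma<\gamma_1$, needed so that \Cref{Rlem:4} gives a non-trivial lower bound. Unlike the Kolmogorov case, one cannot prescribe the Diophantine frequencies in advance because $\omega^{(j)}$ is not a local homeomorphism; the set $\mathcal{K}_{\gamma,t}^{(j+1)}$ is defined through the drifted $\omega^{(j)}$ itself. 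R\"ussmann's chain/link formalism converts this obstruction into a drift-control problem in the fixed smooth space $C^{\mu_0}(\widetilde P,\mathbb{R}^{n+1})$, and the genuine technical labour is making the quantitative estimates sharp enough in the symplectic-mapping setting, where the time step $t$ is an additional small parameter appearing both in $\chi_j$ and in the Diophantine condition \eqref{condition-1}, to reach the exponent $1/\mu_0-1/(\kappa\mu_0)$ claimed in the theorem.
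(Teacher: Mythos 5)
Your overall strategy mirrors the paper's: prove $\mathcal{K}_{\gamma,t}^{(\infty)}\neq\emptyset$ by using the chain/link machinery, applying \Cref{Rlem:1} to bound the drift $|\widetilde\chi_j-\widetilde\chi_0|_{\widetilde P}^{\mu_0}$, then \Cref{Rlem:4} to get a positive lower bound for $m(\mathcal{H}(\widetilde\chi_j))$, and finally \Cref{Rlem:2} to inherit $\mathcal{K}_{\gamma,t}^{(j+1)}\supseteq\mathcal{H}(\widetilde\chi_j)$; then pass to the $C^{\mu_0}$-limit $\widetilde\chi_\infty$ and apply \Cref{Rlem:4,Rlem:2} once more. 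Whether this is phrased as an induction (as you do) or as a proof by contradiction against the existence of a maximal finite chain (as the paper does) is immaterial. However, there are two places where your account is incorrect or incomplete.

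First, your explanation of the exponent $\frac{1}{\mu_0}-\frac{1}{\kappa\mu_0}$ is wrong both arithmetically and conceptually. You claim the ``effective Diophantine constant'' must be replaced by $\gamma/\widetilde\gamma^{\mu_0+1}=\gamma^{1-1/\kappa}$ and that raising to power $1/\mu_0$ gives the stated exponent. But $\widetilde\gamma^{\mu_0+1}=\gamma^{(\mu_0+1)/(\kappa\mu_0(\mu_0+1))}=\gamma^{1/(\kappa\mu_0)}$, so $\gamma/\widetilde\gamma^{\mu_0+1}=\gamma^{1-1/(\kappa\mu_0)}$, not $\gamma^{1-1/\kappa}$, and raising that to power $1/\mu_0$ yields $\gamma^{1/\mu_0-1/(\kappa\mu_0^2)}$, the wrong power. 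The true mechanism, which you mention in passing but then contradict, is the scaling of the amount of non-degeneracy: \Cref{Rlem:4} produces $\widetilde M\gamma^{1/\mu_0}\tilde\beta^{-1-1/\mu_0}$, and since $\tilde\beta\geq\rho^{\mu_0}\beta$ with $\rho=\widetilde\gamma\Theta^{-1}$, one gets $\tilde\beta^{-1-1/\mu_0}\leq\rho^{-(\mu_0+1)}\beta^{-1-1/\mu_0}=\gamma^{-1/(\kappa\mu_0)}\Theta^{\mu_0+1}\beta^{-1-1/\mu_0}$, and the product with $\gamma^{1/\mu_0}$ gives $\gamma^{1/\mu_0-1/(\kappa\mu_0)}$. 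There is no replacement of the Diophantine constant anywhere; the extra factor comes entirely from $\rho^{-(\mu_0+1)}$.

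Second, you gloss over the hypothesis $|\hat\chi-\widetilde\chi_l|_{\widetilde P}\leq\frac{\pi}{2}t\gamma\, r_l$ required by \Cref{Rlem:2} by saying it is ``supplied by \Cref{Rlem:1} once $\lambda>\tau+1$ is chosen appropriately.'' It is not automatic from $\lambda>\tau+1$: \Cref{Rlem:1} gives $|\widetilde\chi_\nu-\widetilde\chi_j|_{\widetilde P}\leq tL_0 r_0^{\alpha+1}4^{-\lambda j(\alpha+1)}$, and reducing this to $\frac{\pi}{2}t\gamma r_j=\frac{\pi}{2}t\gamma r_0 4^{-\lambda j}$ requires the explicit smallness condition $r_0^\alpha\leq\frac{\pi}{2}\gamma^{1-1/(\kappa\mu_0(\mu_0+1))}c_5^{-1}$, i.e., a further constraint on $s_0$ (via $r_0=s_0^\lambda$), which must be verified separately; it is this step that closes the loop to legitimize invoking \Cref{Rlem:2} with $\hat\chi=\widetilde\chi_\nu$ or $\hat\chi=\widetilde\chi_\infty$.
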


\begin{proof}
We first prove $\mathcal {K}_{\gamma,t}^{(\infty)}\neq \emptyset$. If it is not so, there exists a positive integer $\nu$ such that $\langle \mathcal {L}_j\rangle_{0\leq j\leq
\nu}$ is a maximal. Assume $\langle \widetilde{\mathcal
{L}}_j\rangle_{0\leq j\leq \nu}$ is its extension.
For applying
\Cref{Rlem:4} with $\chi=(t\widetilde{\omega},2\pi)$,
$\widetilde{\chi}=\widetilde{\chi}_{\nu}$\,, $
\widetilde{B}=\widetilde{P},~\theta=1,~\mathcal {K}=\mathcal
{K}_{\rho}$, we must check the condition (\ref{three-ineq}).
Observe that $|\chi|_{\widetilde{P}}\leq C_{\cdot}$~. Using
Cauchy's estimate and (\ref{ineq:omega}) we have
\begin{equation*}
\max_{0<\mu\leq\mu_0+1}|D^{\mu}\chi|_{\widetilde{P}}\leq
\mu_0!\,t\Theta.
\end{equation*}
The condition (\ref{1}) permits the application of \Cref{Rlem:1} with
$j=\nu$ and $\mu=\mu_0$, so that we obtain
\begin{equation}\label{es:1}
|\widetilde{\chi}_{\nu}-\widetilde{\chi}_0|_{\widetilde{P}}^{\mu_0}\leq
c(n,\mu_0)\cdot
tL_0\,r_0^{\alpha-\mu_0+1}\frac{(\frac{1}{4})^{\lambda(\alpha-\mu_0+1)}}{1-(\frac{1}{4})^{\lambda(\alpha-\mu_0+1)}}~.
\end{equation}
Furthermore, by using $0<r_0<1$, $1-(\frac{1}{4})^{\lambda
(\alpha-\mu_0+1)}\geq (\frac{1}{4})^{\lambda (\alpha-\mu_0+1)}$ and the condition (\ref{1}), the bound in \eqref{es:1} becomes
\begin{equation*}
|\widetilde{\chi}_{\nu}-\widetilde{\chi}_0|_{\widetilde{P}}^{\mu_0}\leq t\tilde{\beta}/2.
\end{equation*}
Finally, by means of $L_0\leq 1$ and $1-(\frac{1}{4})^{\lambda
(\alpha+1)}\geq (\frac{1}{4})^{\lambda (\alpha+1)}$, we have
\begin{equation*}
L_0\leq \frac{1-4^{-\lambda (\alpha+1)}}{(r_0\cdot4^{-\lambda})^{\alpha+1}}~.
\end{equation*}

Then we can apply \Cref{Rlem:2} to get
\begin{equation}\label{es:2}
|\widetilde{\chi}_{\nu}|_{\widetilde{P}}\leq C_{\cdot}+1\,.
\end{equation}
Therefore, we get (\ref{three-ineq}) with $M_0=C_{\cdot}+1$ and
$M_1=\mu_0!\Theta$. According to \Cref{Rlem:4}, we have
\begin{eqnarray}
m(\mathcal {H}(\widetilde{\chi}_{\nu}))&\geq&m(\mathcal
{K}_{\rho})-\widetilde{M}\cdot\gamma^{\frac{1}{\mu_0}}\cdot
\tilde{\beta}^{-1-\frac{1}{\mu_0}}\nonumber\\
&\geq &m(\mathcal
{K}_{\rho})-\widetilde{M}\cdot\Theta^{\mu_0+1}\cdot\gamma^{\frac{1}{\mu_0}-\frac{1}{\kappa\mu_0}}\cdot
\beta^{-1-\frac{1}{\mu_0}}\,,\label{myk}
\end{eqnarray}
where we have used (\ref{beta}) and
$\rho=\gamma^{\frac{1}{\kappa\mu_0(\mu_0+1)}}\Theta^{-1}$. When $\gamma<\gamma_1$,
$m(\mathcal {H}(\widetilde{\chi}_{\nu}))>0$ holds with
\begin{equation*}
\mathcal {H}(\widetilde{\chi}_{\nu})=\Big\{x\in \mathcal
{K}_{\rho}~\big{|}~|\langle k,t\widetilde{\omega}^{(\nu)}
\rangle+2\pi l(t\widetilde{\omega}^{(\nu)},k)|\geq \frac{\pi}{2}\,
\frac{t\gamma}{|k|^{\tau}},\quad \forall~k\in\mathbb{Z}^n\backslash
\{0\}\Big\}~.
\end{equation*}

From (\ref{chi_j}) with $\mu=0,~j=\nu,~\sigma=j$, we have
\begin{equation}
|\widetilde{\chi}_{\nu}-\widetilde{\chi}_j|_{\widetilde{P}}\leq tL_0\cdot r_0^{\alpha+1}
\frac{4^{-\lambda(j+1)(\alpha+1)}-4^{-\lambda(\nu+1)(\alpha+1)}}{1-4^{-\lambda(\alpha+1)}}\leq
tL_0r_0^{\alpha+1}4^{-\lambda j(\alpha+1)}\,.
\end{equation}
Due to
$L_0=c_5\cdot\widetilde{\gamma}\delta_a$ and $r_0=s_0^{\lambda}$, we
can choose $s_0$ sufficiently small, so that when
\begin{displaymath}
r_0^{\alpha}\leq
\frac{\pi}{2}\gamma^{1-\frac{1}{\kappa\mu_0(\mu_0+1)}}~c_5^{-1}\, ,
\end{displaymath}
we
have $tL_0r_0^{\alpha+1}4^{-\lambda j(\alpha+1)}\leq
\frac{\pi}{2}t\gamma\cdot r_04^{-\lambda j}$. Thus,
\begin{equation}
|\widetilde{\chi}_{\nu}-\widetilde{\chi}_j|_{\widetilde{P}}\leq \frac{\pi}{2}t\gamma\cdot r_j~,
\end{equation}
so we can apply \Cref{Rlem:2} with
$\hat{\chi}=\widetilde{\chi}_{\nu}$ and get
\begin{eqnarray}
\mathcal {K}_{\gamma,t}^{(\nu+1)}&\supseteq&
\bigcap_{0<|k|\leq m_{\nu}}\mathcal
{H}_k(\widetilde{\chi}_{\nu})=\bigcap_{0<|k|\leq m_{\nu}}\Big\{x\in
\mathcal {K}_{\rho}~\big{|}~|\langle k,t\widetilde{\omega}^{(\nu)}
\rangle+2\pi l(t\widetilde{\omega}^{(\nu)},k)|\geq \frac{\pi}{2}\,
\frac{t\gamma}{|k|^{\tau}}\Big\}\nonumber\\[5pt]
&\supseteq&\mathcal {H}(\widetilde{\chi}_{\nu})\neq
\emptyset\,,\nonumber
\end{eqnarray}
Therefore, we get a contradiction to the maximality of the
considered chain.

Now (\ref{1}) allows the application of \Cref{Rlem:1} with
$\nu=\infty$ to the infinite chain $\langle \mathcal {L}_j\rangle_{0\leq
j<\infty}$ for $0\leq \mu\leq
\mu_0$. From (\ref{Cauchy}) we see that $(\widetilde{\chi}_j)$ is a
Cauchy sequence in $C^{\mu_0}(\widetilde{P},\mathbb{R}^{n+1})$,
hence
\begin{equation*}
\widetilde{\chi}_j\xrightarrow{j\rightarrow\infty}\widetilde{\chi}_{\infty}
\in C^{\mu_0}(\widetilde{P},\mathbb{R}^{n+1})\,.
\end{equation*}
As
$j\rightarrow\infty$ in (\ref{es:1}) and (\ref{es:2}) we have
\begin{equation}
|\widetilde{\chi}_{\nu}-\widetilde{\chi}_0|_{\widetilde{P}}^{\mu_0}\leq\frac{t\beta}{2}\quad
\mbox{and}\quad |\widetilde{\chi}_{\infty}|_{\widetilde{P}}\leq
C_{\cdot}+1.
\end{equation}
Applying \Cref{Rlem:4} again with
$\widetilde{\chi}=\widetilde{\chi}_{\infty}$\,,
$\widetilde{B}=\widetilde{P}$, $\theta=1$ and $\mathcal {K}=\mathcal
{K}_{\rho}$~, we obtain
\begin{equation}\label{es:3}
m(\mathcal {H}(\widetilde{\chi}_{\infty}))\geq m(\mathcal
{K}_{\rho})-\widetilde{M}\Theta^{\mu_0+1}\gamma^{\frac{1}{\mu_0}-\frac{1}{\kappa\mu_0}}\cdot
\beta^{-1-\frac{1}{\mu_0}}>0\,.
\end{equation}

It is clear that the links $\mathcal {L}_0,\cdots,\mathcal
{L}_{\nu}$ of our infinite chain $\langle \mathcal
{L}_j\rangle_{0\leq j\leq \infty}$ form a finite chain $\langle
\mathcal {L}_j\rangle_{0\leq j\leq \nu}$ with its extension $\langle
\widetilde{\mathcal {L}}_j\rangle_{0\leq j\leq \nu}$,
$\nu=0,1,\cdots.$ Applying again \Cref{Rlem:2}, but now with
$\widehat{\chi}=\widetilde{\chi}_{\infty}$, we obtain
\begin{equation*}
\mathcal {K}_{\gamma,t}^{(\nu)}\supseteq \mathcal {K}_{\gamma,t}^{(\nu+1)}
\supseteq \bigcap_{k\in \mathbb{Z}^n\atop 0<|k|\leq m_{\nu}}\mathcal
{H}_{k}(\widetilde{\chi}_{\infty})\supseteq\mathcal
{H}(\widetilde{\chi}_{\infty}),\quad \nu=0,1,\cdots.
\end{equation*}
From these relations and (\ref{es:3}), it implies that
\begin{equation}
\mathcal {K}_{\gamma,t}^{(\infty)}=\bigcap_{\nu=0}^{\infty}\mathcal {K}_{\gamma,t}^{(\nu)}
\supseteq \mathcal {H}(\widetilde{\chi}_{\infty})\neq \emptyset\,,
\end{equation}
and
\begin{equation}
m(\mathcal {K}_{\gamma,t}^{(\infty)})\geq m(\mathcal {K}_{\rho})-\widetilde{M}\Theta^{\mu_0+1}\gamma^{\frac{1}{\mu_0}-\frac{1}{\kappa\mu_0}}\cdot
\beta^{-1-\frac{1}{\mu_0}}.
\end{equation}
Denote
$\widetilde{M}:=\widetilde{M}\Theta^{\mu_0+1}$. That completes the proof of \Cref{thm:my-3}.
\end{proof}

Now we return to the proof of \Cref{clm:2}. Note
$L_0=c_5\cdot\gamma\delta_a$, where $\delta_a\leq\delta$. Hence for sufficiently small $\delta$ and $\gamma$, the conditions (\ref{1})
and (\ref{2}) can be satisfied. Combining
$B_{\gamma,t}^{(j)}\supseteq \mathcal {K}_{\gamma,t}^{(j)}$,
$j=0,1,\cdots$ and \Cref{thm:my-3}, we obtain
\begin{equation*}
B_{\gamma,t}^{(0)}\supseteq B_{\gamma,t}^{(1)}\supseteq\cdots\supseteq B_{\gamma,t}^{(\infty)}
:=\bigcap_{j=0}^{\infty}B_{\gamma,t}^{(j)}\neq \emptyset\,,
\end{equation*}
 and
\begin{equation*}
m(B_{\gamma,t}^{(\infty)})\geq m(\mathcal {K}_{\rho})-\widetilde{M}\gamma^{\frac{1}{\mu_0}-\frac{1}{\kappa\mu_0}}\cdot
\beta^{-1-\frac{1}{\mu_0}}.
\end{equation*}

\subsection{Permanent near-conservation of energy}
\label{subsec:energy}

Although it has been proved that symplectic algorithms cannot exactly preserve energy for general Hamiltonian systems (Ge-Marsden theorem~\cite{Ge}), it still has a good performance for the near-conservation of energy. By means of the backward error analysis, exponentially long time near-conservation of the energy can be
derived~\cite{Haire}. Using the KAM theory of symplectic algorithms, we can obtain the perpetual near-preservation of the energy on a Cantor set. In fact, the Hamiltonian function of any Hamiltonian system must be a first integral of the system, so the following corollary is achieved trivially from the conclusions of \Cref{cor:1}. Note that here and henceforth, we will use $h$ as the time step size to replace $t$.

\begin{corollary}\label{cor:2}
Consider a weakly non-degenerate integrable Hamiltonian system like \eqref{eq:basic} with an analytic Hamiltonian function $H:D\rightarrow \mathbb{R}$ $(where ~D\subset\mathbb{R}^{2n})$, and apply any $s$ order symplectic integrator with step size $h$ to this system, then there exists $h_0$, such that when $h<h_0$ and the initial value $(x_0,y_0)$ is located in some Cantor set $D_{\gamma,h}$ of the phase space $D$, the perpetual near-preservation of $H$ can be achieved under the symplectic integrator, {\rm i.e.},
\begin{equation}\label{H-error}
|H(x_n,y_n)-H(x_0,y_0)|\leq ch^s
\end{equation}
where the measure of $D_{\gamma,h}$ depends on step size $h$; $(x_n, y_n)=(x_0+nh, y_0+nh)$, $n=0,1,2,\cdots$;
the constant $c$ don't depend on $h$ and $n$.
\end{corollary}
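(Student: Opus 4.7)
The plan is to combine three ingredients: (a) the Hamiltonian $H$ is itself a first integral of the integrable system; (b) \Cref{cor:1} provides $n$ functionally independent, involutive, Whitney-$C^{\infty}$ functions $F_1^h,\ldots,F_n^h$ defined on the Cantor set $D_{\gamma,h}$ which are \emph{exactly} invariant under the symplectic scheme and approximate $n$ independent first integrals $F_1,\ldots,F_n$ of the original system to order $h^s$ in any $C^\alpha$ norm; (c) by integrability, in the action--angle chart $\Psi$ of \eqref{eq:basic-3} the function $H$ depends only on the actions, so that setting $F_j=p_j\circ\Psi^{-1}$ one may write $H=\Phi(F_1,\ldots,F_n)$ on $\widetilde D$ with $\Phi$ analytic, hence Lipschitz on the relatively compact set of interest with some constant $L$.

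First I would fix $(x_0,y_0)\in D_{\gamma,h}$ and observe that, by \Cref{cor:1}(ii), the numerical orbit $(x_n,y_n)$ remains on the single invariant $n$-torus through $(x_0,y_0)$, which is a level set of $F^h:=(F_1^h,\ldots,F_n^h)$; in particular $F^h(x_n,y_n)=F^h(x_0,y_0)$ for every $n\ge 0$. Writing $F=(F_1,\ldots,F_n)$, the telescoping identity
\begin{equation*}
H(x_n,y_n)-H(x_0,y_0)=\bigl[\Phi(F(x_n,y_n))-\Phi(F^h(x_n,y_n))\bigr]+\bigl[\Phi(F^h(x_n,y_n))-\Phi(F^h(x_0,y_0))\bigr]+\bigl[\Phi(F^h(x_0,y_0))-\Phi(F(x_0,y_0))\bigr]
\end{equation*}
has vanishing middle bracket by the exact invariance above, and the two remaining brackets are controlled by the Lipschitz bound on $\Phi$ together with the $C^0$ estimate $\max_j|F_j^h-F_j|_{D_{\gamma,h}}\le c_jh^s$ from \Cref{cor:1}(iii), giving
\begin{equation*}
|H(x_n,y_n)-H(x_0,y_0)|\le 2L\,\max_j c_j\,h^s=:c\,h^s
\end{equation*}
uniformly in $n$, which is exactly \eqref{H-error}.

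The threshold $h_0$ is the one supplied by \Cref{thm:bigthm} that ensures $D_{\gamma,h}\neq\emptyset$. The only substantive point is the step in which the middle bracket is killed: the \emph{exact} invariance of $F_j^h$ under the one-step map (not merely an $O(h^s)$ invariance) is what prevents the modeling error from accumulating with $n$ and converts the backward-error-type bound into a genuinely perpetual one. Everything else---Lipschitz control of $\Phi$ on the image of the actions, functional dependence of $H$ on the $F_j$ inherited from the integrable action--angle structure, and uniformity of the constants on the compact closure of $\widetilde D$---is either standard or has already been set up in the preceding sections.
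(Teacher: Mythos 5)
Your argument is correct and is exactly the derivation the paper has in mind when it says the corollary "is achieved trivially from the conclusions of Corollary 5.1"; you have simply spelled out what "trivially" means: since $H$ is a function $\Phi$ of the action integrals $F_j$, the exact invariance of $F_j^h$ kills the middle term of the telescoping sum, and the Lipschitz bound on $\Phi$ converts the $O(h^s)$ Whitney-$C^0$ approximation $F_j^h\approx F_j$ into the uniform-in-$n$ bound \eqref{H-error}. The paper also sketches an alternative route via backward error analysis and the modified Hamiltonian $\widetilde H$ restricted to the Cantor set, but that is offered only as a complementary viewpoint, not as the primary proof, so your route matches the intended one.
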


From the viewpoint of backward error analysis or formal energy, when a symplectic integrator applies to an analytic weakly non-degenerate integrable Hamiltonian system, the numerical solution can be interpreted as the exact solution of a modified Hamiltonian system that is a formal series in powers of the step size~\cite{Haire}. Although The modified Hamiltonian function or formal energy, denoted by $\widetilde{H}$, is non-convergent in any region~\cite{Sanz-Serna91b,Wang,Tang}, the existence of the numerical invariant tori in phase space implies that it is well-defined on some Cantor set and close to the original energy $H$ up to the order $O(h^s)$. Using these results and the triangle inequality, we can get \eqref{H-error} again.

\section{Numerical experiments}
\label{sec:experiments}

In this section, we will investigate the preservation and destruction of invariant tori by numerical experiments. Consider an analytic weakly non-degenerate integrable Hamiltonian system with $n$ degrees of freedom like (\ref{eq:basic}). As explained before, the whole phase space of the system has a foliation
into $n$-dimensional invariant tori under the action-angle variables $(p,q)\in B\times T^n$, where $B$ is a connected bounded open subset of $\mathbb{R}^n$ and $T^n$ the standard $n$-dimensional torus. The motion on each torus $\{p=p^*\}$ is a linear flow determined by the equation
\begin{equation}\label{condition-4}
\dot{q}=\omega(p^*):=\frac{\partial H(p^*)}{\partial p}\,,
\end{equation}
where the Hamiltonian function $H$ only depends on the action variables $p$, and $\omega: B\rightarrow \mathbb{R}^n$ is the frequency map of the system.

From \Cref{thm:my-2}, we can see that applying a symplectic integrator of order $s$ to this system, if the step size $h$ is sufficiently small, the symplectic difference scheme also has invariant tori (named numerical invariant tori as before) forming a Cantor set of the phase space. Specifically, there exist a Cantor set $\mathcal {K}_{\gamma,h}\subset B$ and a mapping $\omega_{\gamma,h}:\mathcal{K}_{\gamma,h}\rightarrow\Omega_{\gamma,h}$ such that the one-step map $S_h$ of the symplectic scheme conjugate to a rotation: $(p,q)\rightarrow (p,q+h\omega_{\gamma,h}(p))$, when it is restricted to $\mathcal{K}_{\gamma,h}\times T^n$. Here $\Omega_{\gamma,h}$ is defined in (\ref{condition-1}), and a sufficiently small $\gamma>0$ can be given beforehand, so we can write $\omega_{\gamma,h}=\omega_h$ for simplicity. The frequency map $\omega_h$, defined in $\mathcal{K}_{\gamma,h}$, can be interpreted as the "frequency" of the numerical solution on these numerical invariant tori, in the sense that there exists a modified Hamiltonian system so that its time-$h$ flow exactly interpolates the numerical solution on these tori. Generally, $\omega_h$ depends on the step size $h$, and its image is not equal to the one of $\omega$ when restricted to the set $\mathcal{K}_{\gamma,h}$, but it is an $O(h^s)$-approximation of $\omega$ by \Cref{rm:3}.

Note that the values of $\omega_h$ are in $\Omega_{\gamma,h}$, that means $h$ and $\omega_h$ must satisfy the Diophantine condition (see (\ref{condition-1}))
\begin{equation}\label{condition-2}
|e^{i\langle
k,h\mathbf{\omega}_h\rangle}-1|\geq
\frac{h\gamma}{|k|^{\tau}},~\forall~ k\in\mathbb{Z}^n\backslash
\{0\}\,,
\end{equation}
otherwise resonance may be occur. Following Shang~\cite{Shang00}, we define the resonant step size if the relationship
\begin{equation}\label{condition-3}
h=\frac{2\pi l}{\langle k,\omega_h\rangle}
\end{equation}
holds for some $k\in\mathbb{Z}^n\backslash\{0\}$ and $l\in \mathbb{Z}$. The resonances introduced by numerical discretization is referred as numerical resonances~\cite{Moan}. The number $|k|=k_1+\ldots+k_n$ is called resonance order. In contrast, the step sizes satisfying (\ref{condition-2}) will be called non-resonant step sizes (or Diophantine step sizes~\cite{Shang00}).

The frequency $\omega_h$ is difficult to predict, so the resonant step sizes cannot be determined in advance by using the relationship (\ref{condition-3}). In the following, we will relate the resonant step size to the variation of the energy error, then verify our theoretical results for one degree and multi-degrees of freedom systems, respectively.

\subsection{One degree of freedom system}
\label{subsec:ex-1}

We take a typical Hamiltonian system, simple pendulum, as an example to study the preservation and destruction of invariant tori. If the units are chosen in such a way that the mass of the blob, the length of the rod and the acceleration of gravity are all unity, then the equation of motion is $\ddot{x}=-\sin{x}$, where $x$ is the angle of the pendulum deviation from the vertical. By introducing canonical variables $(p,q)=(\dot{x},x)$, this equation can be rewritten in the Hamiltonian form
\begin{equation} \label{eq:3}
\left\{ \begin{aligned}
         \dot{p} &= -\frac{\partial H}{\partial q}(p,q) \\
                  \dot{q}&=\frac{\partial H}{\partial p}(p,q)
                          \end{aligned} \right.
\end{equation}
with Hamiltonian $H(p,q)=p^2/2+(1-\cos{q})$. The period of the pendulum is
\begin{equation} \label{eq:4}
T_0=2\sqrt{2}\int_0^{q_m}\frac{\mathrm{d}q}{\sqrt{\cos{q}-\cos{q_m}}}\,,
\end{equation}
where $q_m$ is the largest deflection angle of simple pendulum. Due to the nonlinear dependence of the period on the amplitude, it is easy to see that the system satisfies Kolmogorov non-degeneracy condition, and thereby it also satisfies R\"{u}ssmann non-degeneracy condition.

Here we apply the implicit midpoint (IM) scheme to the system (\ref{eq:3}). This is a symplectic algorithm of order 2. Starting from the initial generalized coordinate $q_0=0$ with the conjugated generalized momenta $p_0=0.7$, the phase flow of this system forms a closed curve as the invariant torus of the system. The corresponding period can be computed by (\ref{eq:4}), $T_0\approx 6.4901$, and the (angular) frequency of the motion is $\omega=\frac{2\pi}{T_0}\approx 0.9681$. It is of interest to study how one choose the step size $h$ not to introduce instabilities for a priori stable orbits.

From the preceding results, the invariant torus of the system can be preserved by symplectic integrators as long as the time-step $h$ is small enough. Here we illustrate the existence of the numerical invariant torus by the frequency spectrum analysis. First, we integrated the system numerically with $h=0.01$ and initial condition $(p_0,q_0)=(0.7,0)$, and recorded the values of $q_n$ at $n=0,1,\ldots,10^5-1$. This yielded a time series consisting of $N=10^5$ numbers. Then we used NAFF (Numerical Analysis of Fundamental Frequencies) algorithm, proposed by Laskar~\cite{Laskar88}, to compute the frequencies and amplitudes of the numerical solution that is illustrated in \cref{danbai:4th.1}. The advantage of Laskar's method is that it recovers the fundamental frequencies with an error that falls off as $T^{-4}$~\cite{Laskar99} over a finite time span $[-T,T]$, compared with $T^{-1}$ for the ordinary FFT method.

\begin{figure}[h!]
\centering
\subfloat{\label{danbai:4th.1}\includegraphics[scale=0.4]{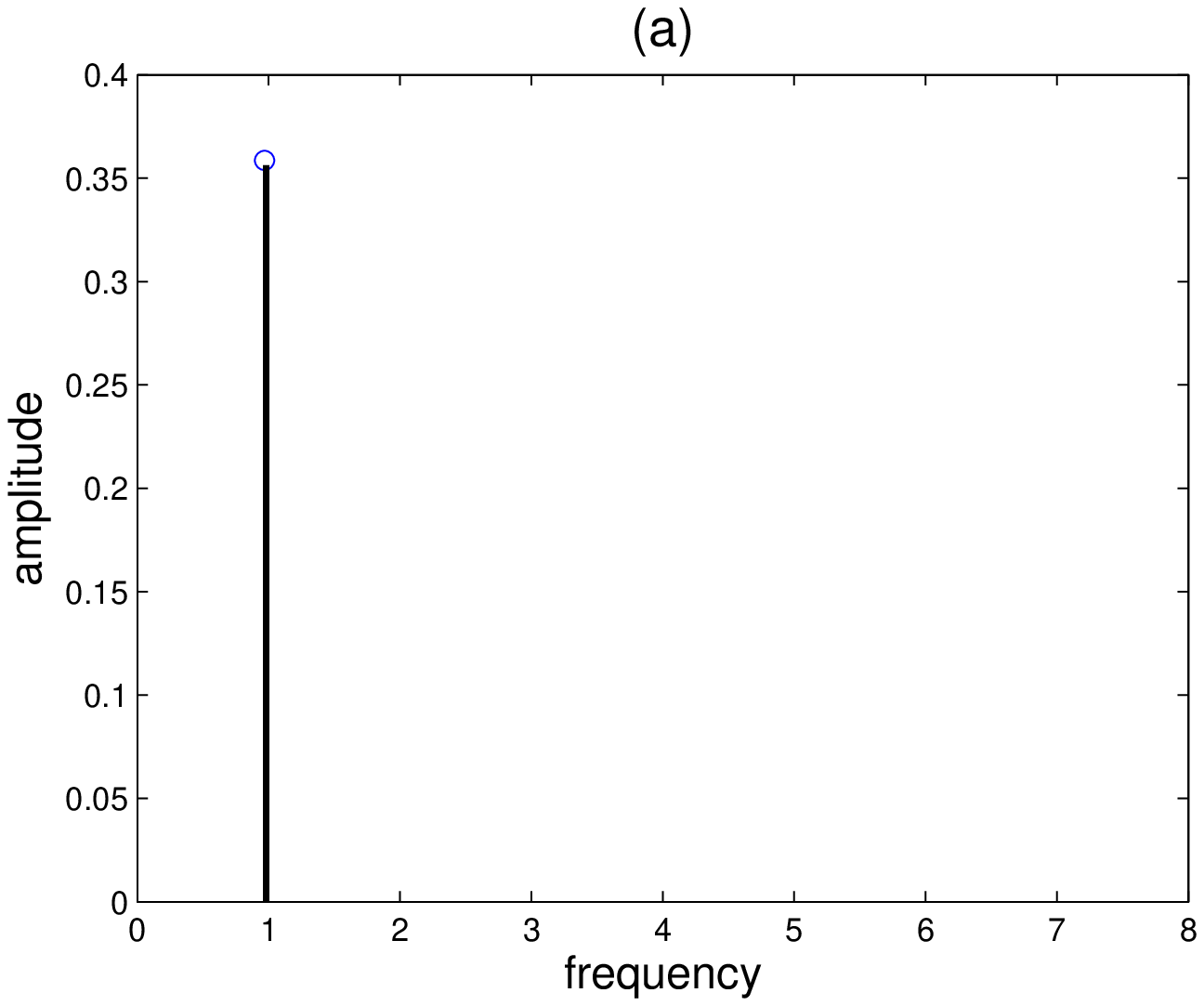}}\qquad
\subfloat{\label{danbai:4th.2}\includegraphics[scale=0.4]{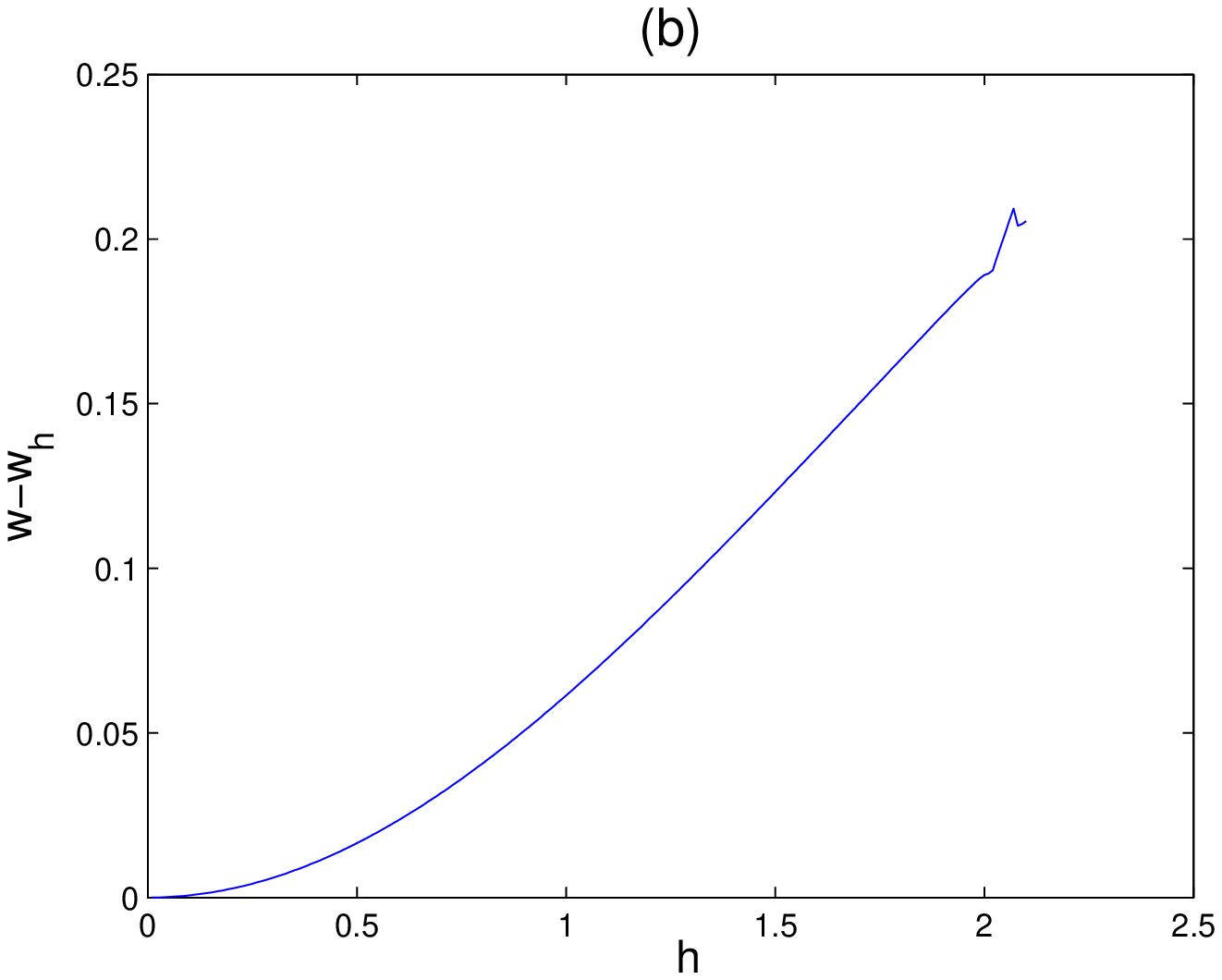}}
\caption{{\rm (a)} Spectrum of the IM scheme applied to simple pendulum. {\rm (b)} The difference between $\omega_h$ and $\omega$ as a function of step size $h$.}
\label{danbai:4th}
\end{figure}
\begin{figure}[h!]
\centering
\subfloat{\includegraphics[scale=0.4]{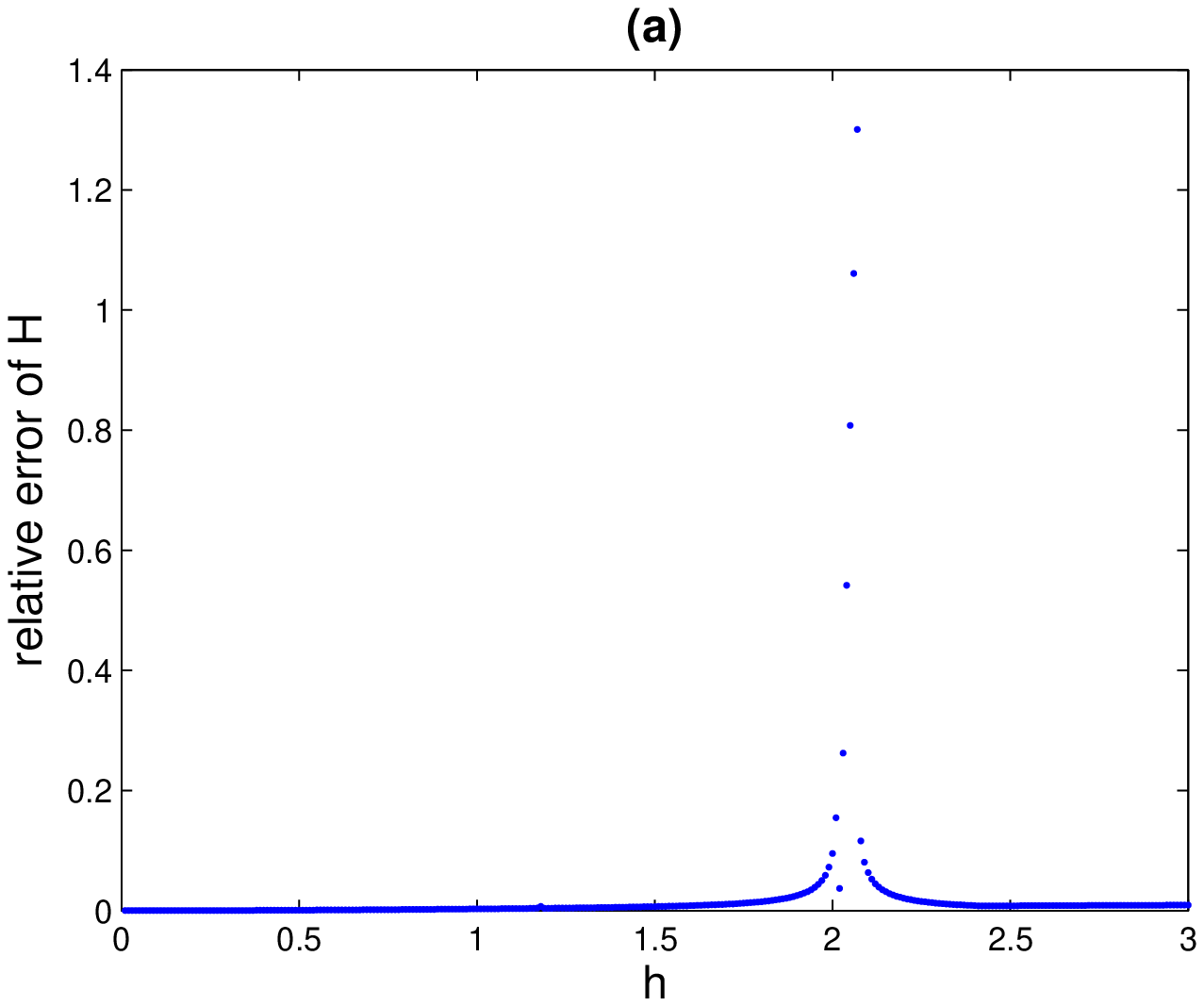}}\qquad
\subfloat{\includegraphics[scale=0.4]{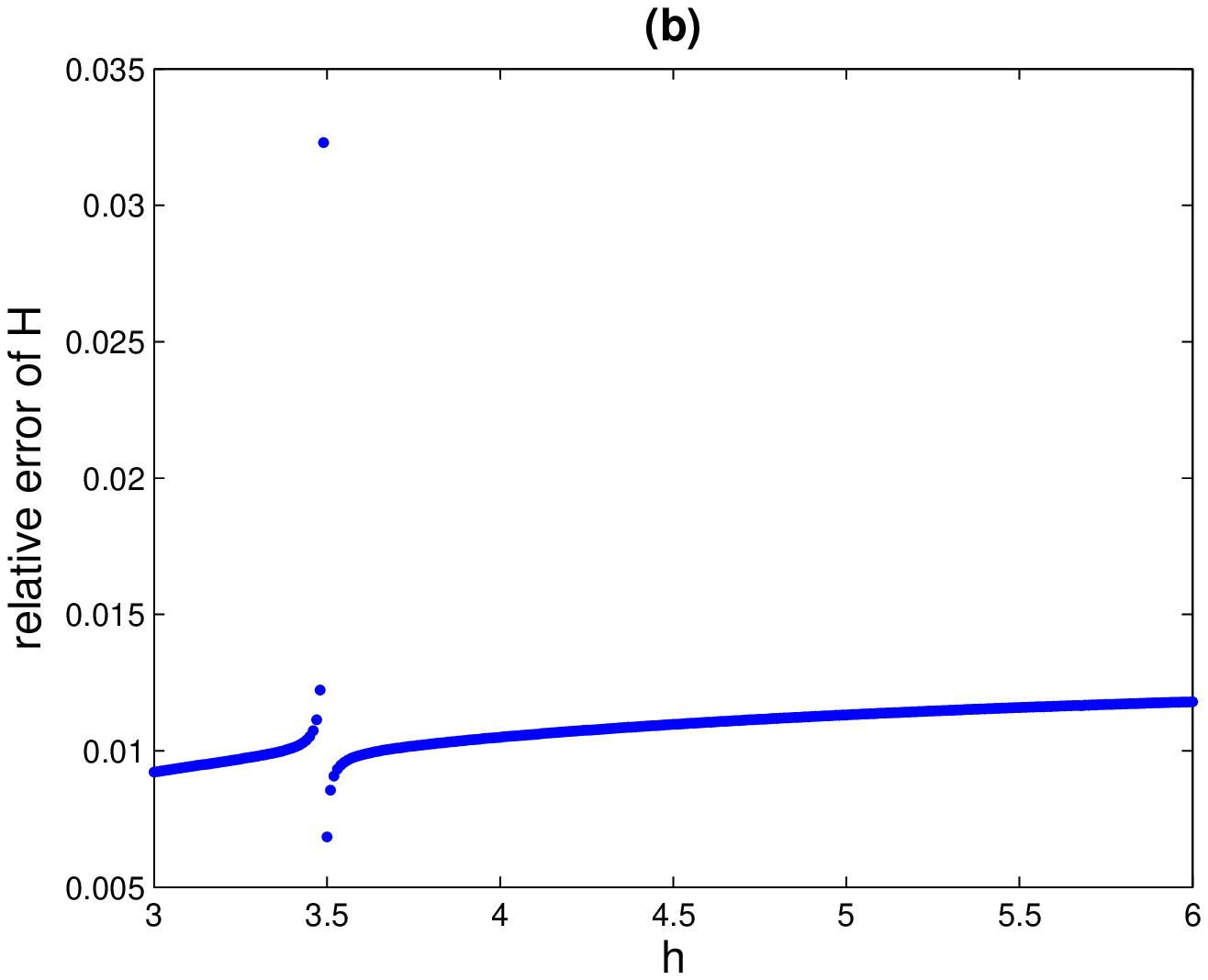}}
\caption{Relative numerical errors of the energy $H$ in the infinity norm as a function of the step size $h$ over $10^5$ steps (a) The range of $h$ is from 0.01 to 3. (b) The range of $h$ is from 3 to 6.}
\label{danbai:1nd}
\end{figure}

In \cref{danbai:4th.1}, there is one spectral line at the frequency $\omega=0.9681$. That is consistent with the periodicity of the numerical solutions. In addition,  the errors between $\omega_h$ and $\omega$ with increasing $h$ are plotted in \cref{danbai:4th.2}. It can be observed that there exists $h_0>0$ (say $h_0=1$) such that when $0<h<h_0$, the frequency errors are of the order $O(h^2)$ (see \Cref{rm:3}).

From \Cref{cor:1}, we know that symplectic integrators approximately conserve the values of first integrals of the system with the accuracy of $h^2$ when the time step $h$ is non-resonant (see (\ref{condition-2})). In contrast, if $h$ is resonant, the invariant torus of the system will break in general, which leads to a sudden increase in the error of the first integrals. Therefore, we can identify the resonant step sizes by examining the error of first integrals with the change of the step size. For the pendulum system, we illustrate the relative errors of the energy $H$ as a function of the step size $h$ under the IM scheme in \cref{danbai:1nd}. It is observed that there are two peaks in the errors, corresponding to the step sizes $h\approx 2.05$ and $h\approx 3.5$ respectively, which implies that the numerical resonance occurs at these two step sizes.

\begin{figure}[h!]
\centering
\subfloat
{\includegraphics[scale=0.52]{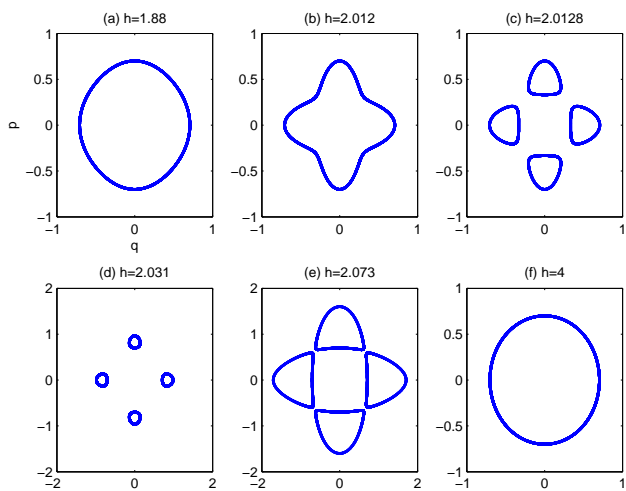}}
 \caption{Phase diagrams of numerical solutions for increasing step sizes near $h=2.1$.}
\label{danbai:2nd}
\end{figure}

In \cref{danbai:2nd,danbai:3nd}, we display the variation of phase diagrams of the numerical solutions with the step sizes, in the vicinity of $h=2.05$ and $h=3.5$ respectively. For $h=2.05$, the corresponding frequency $\omega_h=0.7627$ computed by NAFF algorithm, such that the equation (\ref{condition-3}) is satisfied approximately for $k=4$ and $l=1$. That means the fourth-order resonance occurs near the step size $h=2.05$, and correspond to the emergence of four separate islands in \cref{danbai:2nd}. Similarly, for $h=3.5$ the frequency $\omega_h=0.5990$. Thus the equation \eqref{condition-3} is satisfied approximately for $k=3$ and $l=1$. This corresponds to the third-order resonance in \cref{danbai:3nd} near $h=3.5$. Notice that when numerical resonances occur, the numerical solutions cannot be viewed as exact ones of a modified Hamiltonian system close to the original one, since the numerical solutions would not lie on closed smooth curves.

Finally, we point out that the method is only suitable to identify those apparent numerical resonance phenomena. In theory, there are infinite step sizes to make the relationship (\ref{condition-3}) hold, but most of the destruction of invariant tori caused by resonant step sizes are very slight, in particular, when the step size is small.

\begin{figure}[h!]
\centering
\subfloat
{\includegraphics[width=11.5cm,height=3.5cm]{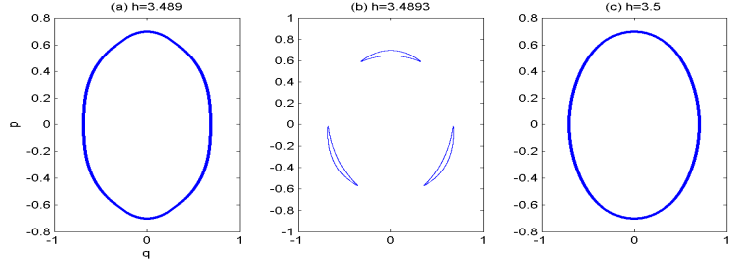}}
 \caption{Phase diagrams of numerical solutions for increasing step sizes near $h=3.5$.}
\label{danbai:3nd}
\end{figure}

\subsection{Multi-degrees of freedom system}
\label{subsec:ex-2}

In this section, we consider an integrable Hamiltonian system with multiple degrees of freedom, which satisfies R\"{u}ssmann non-degeneracy condition but not Kolmogorov one. The example is from R\"{u}ssmann~\cite{Ru1989}, and the Hamiltonian is
\begin{equation}\label{Hamilaton}
K(x,y)=\Big(\frac{x_1^2+y_1^2}{2}\Big)^2\Big(1+\frac{x_2^2+y_2^2}{2}\Big)+\Big(\frac{x_1^2+y_1^2}{2}\Big)^3\frac{x_3^2+y_3^2}{2}\,.
\end{equation}
By the symplectic coordinate transformation $\Psi$:
\begin{equation}\label{Hamilaton-1}
\left\{ \begin{aligned}
         x_i&=\sqrt{2p_i}\cos{q_i},\\
     y_i&=\sqrt{2p_i}\sin{q_i},\quad (i=1,2,3).
                          \end{aligned} \right.
                          \end{equation}
the Hamiltonian becomes $H(p)=K\circ\Psi(p,q)=p_1^2+p_2p_1+p_3p_1^3$, and the system takes the simple form
\begin{equation}\label{Hamilaton-2}
\left\{ \begin{aligned}
         \dot{p}&=0,\\
     \dot{q}&=\omega(p)=\frac{\partial H}{\partial p}(p).
                          \end{aligned} \right.
                          \end{equation}
It is easy to verify that $\omega$ satisfies the weakly non-degeneracy condition but not the non-degeneracy one.

We apply four numerical schemes to the system (\ref{Hamilaton}) for comparison. They are implicit midpoint (IM), St\"{o}rmer-Verlet, symplectic Euler and Runge scheme (refer to~\cite{Haire}), respectively. Both IM and St\"{o}rmer-Verlet scheme are symplectic algorithms of order 2. Symplectic Euler method is a first order symplectic method, while Runge scheme is a 2nd order non-symplectic.

Starting from the initial values $\mathbf{x}_0=[0.2, 0.1, 0.4\sqrt{2}]^{\mathrm{T}}$ and $\mathbf{y}_0=[0.37, 0.2, 0.53]^{\mathrm{T}}$, the solution of the system is a quasiperiodic motion on some invariant torus in phase space by (\ref{Hamilaton-2}), and the corresponding frequency vector $\mathbf{\omega}=[0.1884, 0.0078, 6.9198\times 10^{-4}]^{\mathrm{T}}$. In order to verify the existence of numerical invariant torus when $h$ is small, we apply the IM scheme to integrate the system with $h=0.01$ over $10^5$ steps. The calculated frequency vector $\omega_h$ is almost the same as $\omega$, which indicate the quasiperiodic character of the numerical solutions.

There are three independent first integrals (or invariants) in this system, i.e.,
\begin{equation}
I_i=\frac{x_i^2+y_i^2}{2}, \qquad i=1,2,3.
\end{equation}
The energy $K$ is an assemble of them. In \cref{Russman:2nd}, we show the relative errors of the three first integrals and the energy $K$ as a function of the step size $h$ for different numerical schemes, and we can identify the resonant step sizes by examining the variation of the errors. Due to relatively high accuracy of the IM scheme, one can distinguish more numerical resonances in \cref{Russman:2nd.1}. For example, there is a peak at about $h=0.14$, that implies the step size is a resonant one with the corresponding $k=(231,64,12)\in\mathbb{Z}^3$ and $l=1$ in (\ref{condition-3}). In addition, it is observed from \cref{Russman:2nd.1} that the resonance steps have approximately equal intervals with the length $0.14$.

In \cref{Russman:2nd.2,Russman:2nd.3}, we are only able to identify relatively few resonance steps, due to the low accuracy of the algorithms. The corresponding values of $k$ are marked in the graph. As a comparison, \cref{Russman:2nd.4} shows the relative errors of the first integrals under the 2nd order non-symplectic Runge scheme, from which no obvious peaks are observed. For multi-degrees of freedom Hamiltonian system, we cannot directly observe the destruction of invariant torus through phase diagram. However, one can investigate the occurrence of numerical resonances indirectly by the above method.

\begin{figure}[t]
\centering
\subfloat
{\label{Russman:2nd.1}\includegraphics[width=6.0cm]{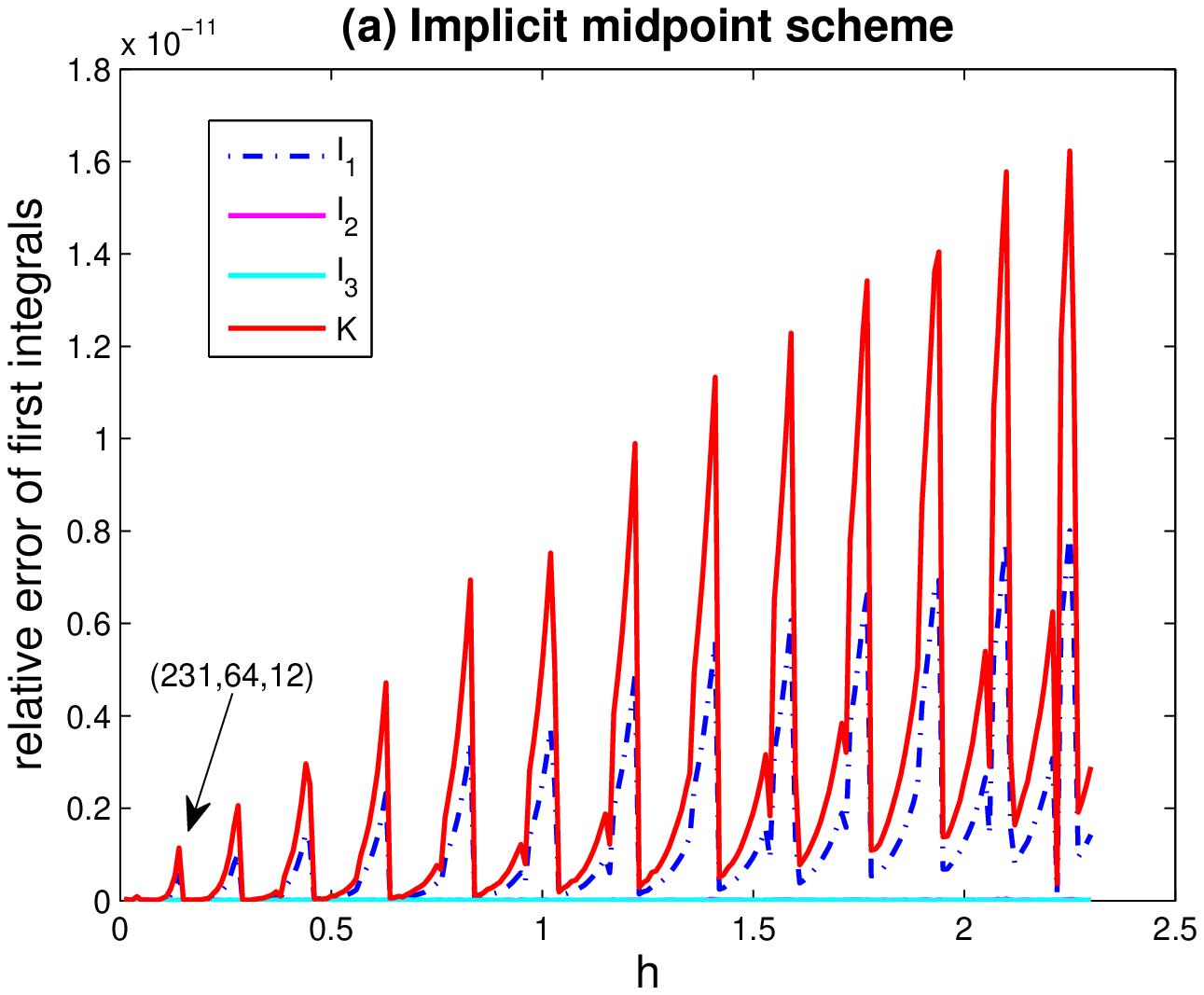}}\qquad
\subfloat
{\label{Russman:2nd.2}\includegraphics[width=6.0cm]{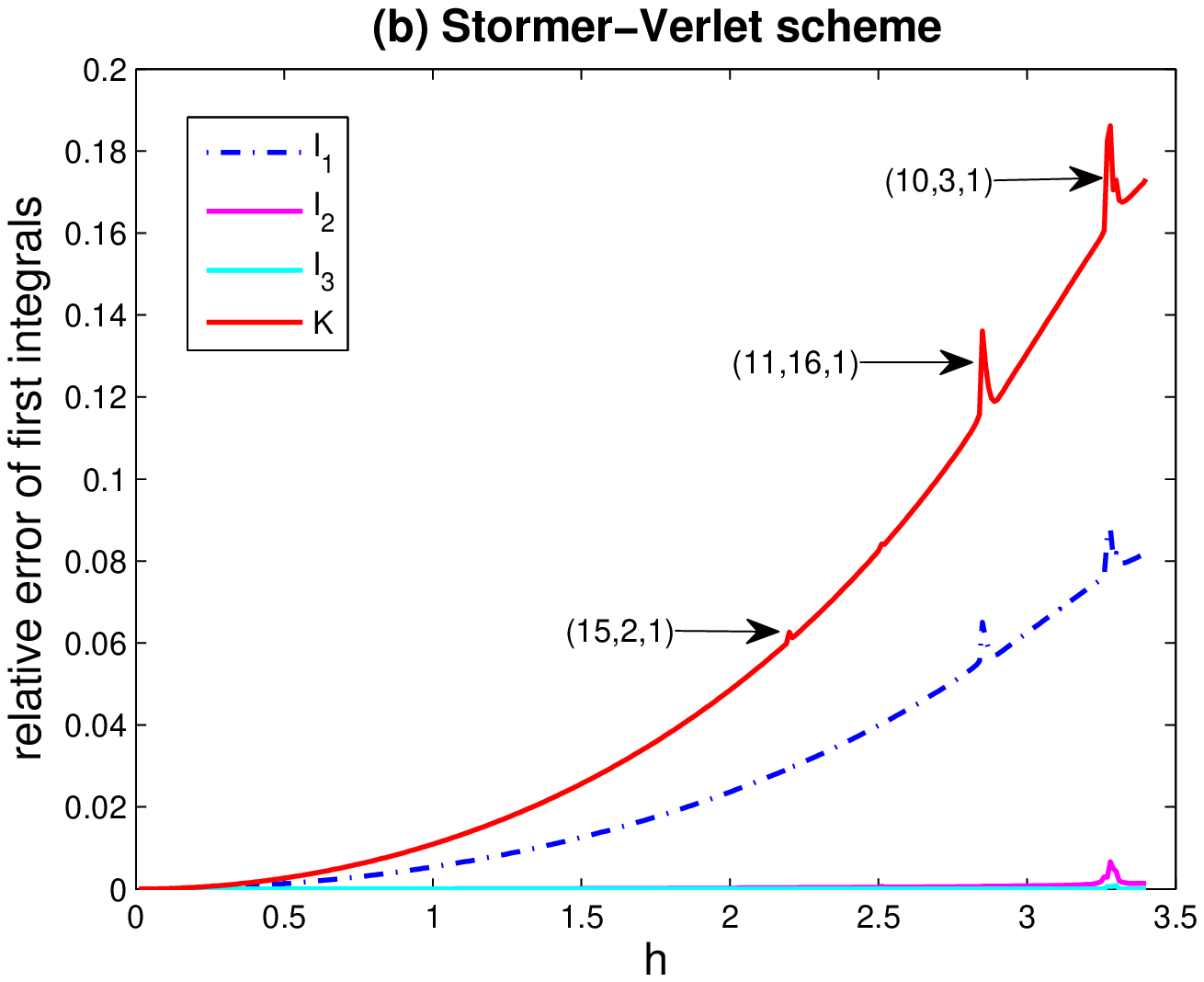}}\\
\subfloat
{\label{Russman:2nd.3}\includegraphics[width=6.0cm]{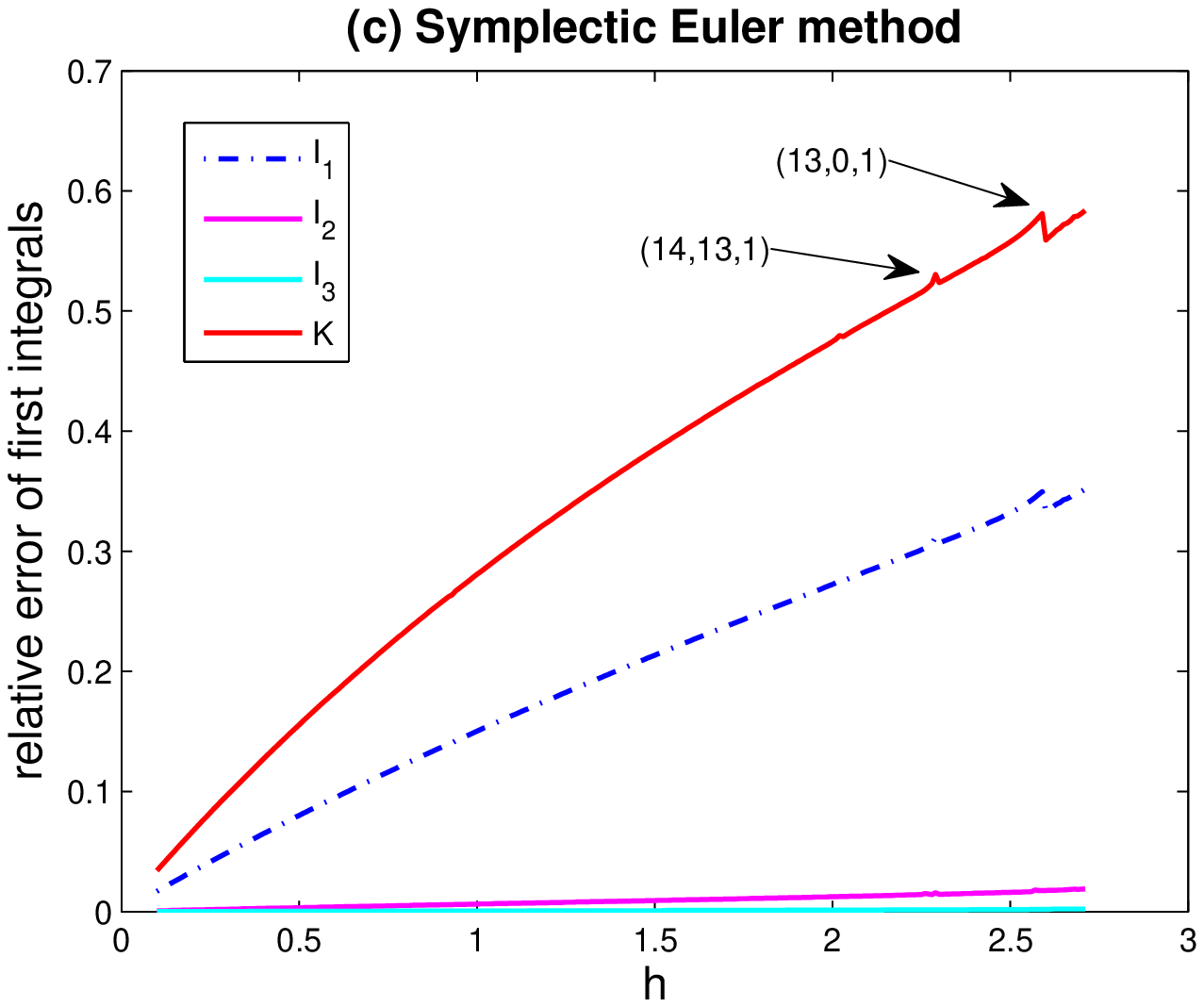}}\qquad
\subfloat
{\label{Russman:2nd.4}\includegraphics[width=6.0cm]{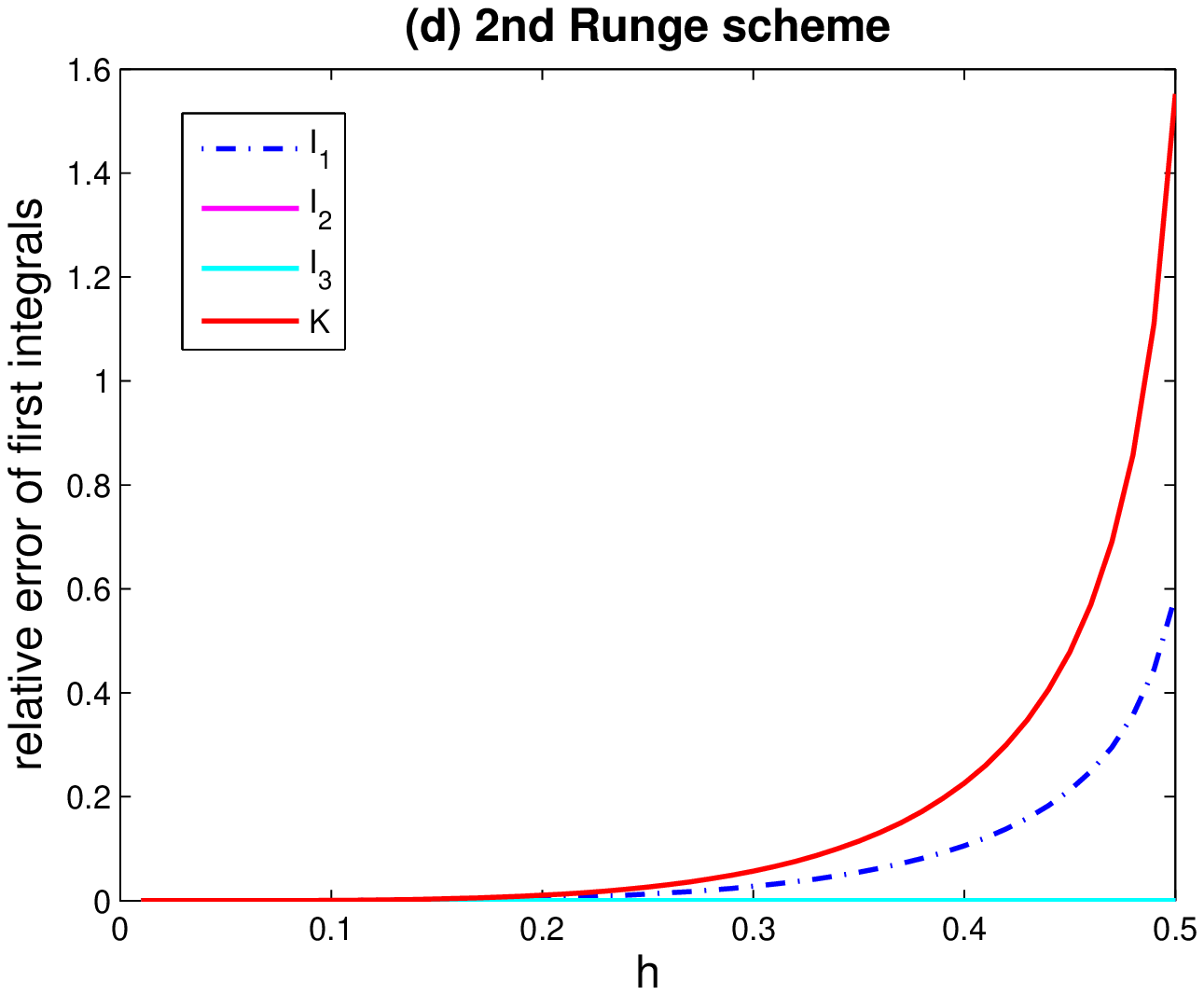}}
\caption{Relative errors of $I_1$, $I_2$, $I_3$ and $K$ in the infinity norm as a function of the
step size $h$ over $10^5$ steps for four different numerical schemes.}
\label{Russman:2nd}
\end{figure}

\section{Discussion}
\label{sec:conclusions}

In the present work, we generalize Shang's results (1999, 2000) on the existence of numerical invariant tori for symplectic integrators. To be specific, we prove that when the non-degeneracy condition is weakened from Kolmogorov's one to R\"{u}ssmann's one, most non-resonant invariant tori of the integrable system still can be preserved by symplectic integrators if the step size $h$ is sufficiently small. This type of theorem helps to understand the qualitative behavior of symplectic integrators. In particular, our result contributes to the nonlinear stability analysis of symplectic integrators for a more general class of integrable Hamiltonian systems.

More quantitative results can be achieved, such as the estimate of the error on the frequencies between the numerical integrators and the exact ones, near-preservation of first integrals and so on. Furthermore, relating the resonant step sizes to the variation of the error of first integrals, we can investigate the preservation and destruction of invariant tori under symplectic integrators by numerical experiments, thereby verifying our theoretical results.

Thanks to the numerical KAM theory, the permanent numerical stability of symplectic integrators can be obtained on those preserved numerical invariant tori, which form a relatively large measure Cantor set in phase space for small step size. However, when the resonance occurs, the corresponding invariant torus will be broken. So the permanent stability of numerical solutions appears to be impossible in this case, especially for high dimensional systems. Fortunately, an exponentially long time stability, in a negative power of the step size, can be derived by a Nekhoroshev-like theorem~\cite{Ding} (see also~\cite{Benettin,Lubich,Stoffer1998} where some similar results are available on the exponential stability of symplectic algorithms in a neighborhood of invariant tori.). This kind of theorems can greatly ease the occurrence of the instability of symplectic integrators in practice. Combining these two types of theorem (i.e., KAM-like theorem and Nekhoroshev-like one), one can get a more complete characterization of the qualitative behavior of symplectic integrators.


\Acknowledgements{This work was supported by National Natural Science Foundation of China (Grant No. 11671392).}




\begin{appendix}
\section{\label{sec:compint}}
In this appendix, we provide the proofs of \Cref{Rlem:2,Rlem:4}.

\subsection{Proof of \Cref{Rlem:2}}
\label{sec:proof:1}

\begin{proof} From (\ref{chi}) with $\mu=0$, (\ref{L_0}) and
c(n,\,0)=1, we infer that
$$|\widetilde{\chi}_j|_{\widetilde{P}}\leq C_{\cdot}+L_0r_0^{\alpha+1}
\frac{(\frac{1}{4})^{\lambda(\alpha+1)}}{1-(\frac{1}{4})^{\lambda(\alpha+1)}}\leq
C_{\cdot}+1\,.$$ $\forall~ x\in \mathcal
{H}_k(\widehat{\chi})$, i.e. $|\langle
k,t\widehat{\omega}(x)\rangle+2\pi l(t\widehat{\omega},k) |\geq\pi
\frac{t\gamma}{|k|^{\tau}}$ for $k\neq 0$, then
\begin{eqnarray*}
|\langle k,t\widetilde{\omega}^{(j)} \rangle+2\pi
l(t\widetilde{\omega}^{(j)},k)|&\geq&
|\langle k,t\widehat{\omega} \rangle+2\pi l(t\widetilde{\omega}^{(j)},k)|-|\langle k,t\widetilde{\omega}^{(j)}-t\widehat{\omega} \rangle|\\
 &\geq& |\langle k,t\widehat{\omega} \rangle+2\pi
 l(t\widehat{\omega},k)|-|k|_2|t\widetilde{\omega}^{(j)}-t\widehat{\omega}|_{\widetilde{P}}\\
 &\geq&\pi
 \frac{t\gamma}{|k|^{\tau}}-m_j|t\widetilde{\omega}^{(j)}-t\widehat{\omega}|_{\widetilde{P}}\,,
 \end{eqnarray*}
for $|k|\leq m_j$\,. Since $m_j^{\tau+1}=\frac{1}{r_j}$ and
$|t\widetilde{\omega}^{(j)}-t\widehat{\omega}|_{\widetilde{P}}\leq
\frac{\pi}{2}t\gamma\, r_j$\,, we have
$$|\langle
k,t\widetilde{\omega}^{(j)} \rangle+2\pi
l(t\widetilde{\omega}^{(j)},k)|\geq \frac{\pi}{2}
\frac{t\gamma}{m_j^{\tau}}\,.$$ Therefore,
\begin{equation}\label{H-k}
\mathcal {H}_k(\widehat{\chi})\subseteq\mathcal {H}_{kj}:=\Big\{x\in
\mathcal {K}_{\rho}~\big{|}~|\langle k,t\widetilde{\omega}^{(j)}
\rangle+2\pi l(t\widetilde{\omega}^{(j)},k)|\geq \frac{\pi}{2}\,
\frac{t\gamma}{m_j^{\tau}}\Big\}\,.
\end{equation}
By the definition \eqref{tilde-k}, we deduce
\begin{equation}
\mathcal {K}_{\gamma,t}^{(j+1)}=\mathcal {K}_{\gamma,t}^{(j)}\bigcap \Big(\bigcap\limits_{|k|\leq
m_j} \mathcal {H}_{kj}\Big)\supseteq\mathcal
{K}_{\gamma,t}^{(j)}\bigcap \Big(\bigcap\limits_{|k|\leq m_j}
\mathcal {H}_{k}(\widehat{\chi})\Big)\,,
\end{equation}
for $0\leq j\leq \nu$.
Therefore, with the help of induction, we obtain
\begin{equation}
\mathcal {K}_{\gamma,t}^{(\nu+1)}\supseteq \mathcal
{K}_{\gamma,t}^{(0)}\bigcap \Big(\bigcap\limits_{|k|\leq m_{\nu}}
\mathcal {H}_{k}(\widehat{\chi})\Big)=\bigcap\limits_{|k|\leq
m_{\nu}} \mathcal {H}_{k}(\widehat{\chi}).
\end{equation}
The monotonicity, i.e., $\mathcal
{K}_{\gamma,t}^{(j+1)}\subseteq \mathcal {K}_{\gamma,t}^{(j)}$
for $0\leq j\leq\nu$, is a trivial conclusion. Thus, this lemma is proved.

\end{proof}

\subsection{Proof of \Cref{Rlem:4}}
\label{sec:proof:2}

\begin{proof} Denote
\begin{equation}
F_k(x)=\frac{\langle k,t\widetilde{\omega}(x)\rangle }{2}, \quad
t\in [0,1]~.
\end{equation}
Because $F_k$ is a continuous function on the compact set $\mathcal
{K}$, there exist a finite number of
sets, say $\Big\{[(i-1)\pi,i\pi]\Big\}_{i=-\varphi_k}^{\varphi_k}$,
$\varphi_k\in \mathbb{Z}^{+}$, to cover the range of
$F_k$ on $\mathcal {K}$. Define
\begin{equation}
A_k^i=F_k^{-1}\Big([(i-1)\pi,i\pi]\Big)\,,~ \mbox{for}~
i=-\varphi_k,-\varphi_k+1,\cdots,\varphi_k.
\end{equation}

Choose $l=l(t\widetilde{\omega},k)\in
\mathbb{Z}$ such that
\begin{equation}\label{belong}
\Big|\frac{\langle k,t\widetilde{\omega}\rangle+2\pi l}{2}\Big|\in
\Big[0,\frac{\pi}{2}\Big]\,,
\end{equation}
must also be finite for fixed $k$, and when $x\in A_k^i$,
$l(t\widetilde{\omega},k)$ depends only on $k$. It means that $l$
will take $2\varphi_k+1$ constant values, say
$\{l_i\}_{i=1}^{2\varphi_k+1}$, on the $2\varphi_k+1$ domains
$\{A_k^i\}_{i=-\varphi_k}^{\varphi_k}$ for fixed $k$. Using
(\ref{belong}), one has $|2\pi l|\leq |\langle
k,t\widetilde{\omega}\rangle|+\pi\leq
|k||\widetilde{\chi}|_{\widetilde{B}}\leq M_0 |k|$, so
\begin{equation}\label{|l|}
|l|\leq\frac{M_0}{2\pi}|k|\quad\mbox{and}\quad 2\varphi_k+1\leq
\frac{M_0}{\pi}|k|+1\leq (\frac{M_0}{\pi}+1)|k|.
\end{equation}
for $k\neq 0$. Denote
\begin{equation}\label{[chi]}
[\chi]_k(x)=\langle c,\chi(x)\rangle\quad \mbox{and}
\quad[\widetilde{\chi}]_k(x)=\langle c,\widetilde{\chi}(x)\rangle\,,
\end{equation}
where
\begin{equation*}
c=\frac{\tilde{k}}{|\tilde{k}|},\quad
\tilde{k}=(k,l(t\widetilde{\omega},k)).
\end{equation*}
Note $l$ is constant on $A_k^i$, so
\begin{eqnarray}\label{|chi|-1}
\Big|[\chi]_k-[\widetilde{\chi}]_k\Big|_{A_k^i}^{\mu_0}&:=&\max_{0\leq\nu\leq\mu_0
\atop x\in A_k^i}\max_{a\in
\mathbb{C}^n\atop |a|_2=1}\big|D^{\nu}([\chi]_k-[\widetilde{\chi}]_k)(x)(a^{\nu})\big|_2\nonumber\\
&=&\max_{0\leq\nu\leq\mu_0\atop x\in A_k^i}\max_{a\in
\mathbb{C}^n\atop|a|_2=1}\frac{1}{|\tilde{k}|}|\langle
\tilde{k},D^{\nu}(\chi-\widetilde{\chi})(x)(a^{\nu})\rangle|_2\nonumber\\
&\leq&\max_{0\leq\nu\leq\mu_0\atop x\in A_k^i}\max_{a\in
\mathbb{C}^n\atop|a|_2=1}|D^{\nu}(\chi-\widetilde{\chi})(x)(a^{\nu})|_2\nonumber\\
&=:&|\chi-\widetilde{\chi}|_{A_k^i}^{\mu_0}\leq \frac{t\beta}{2}\,,
\end{eqnarray}
where the inequality is a consequence of Cauchy inequality and
$$|\tilde{k}|_2\leq |\tilde{k}|=(\sum_{i=1}^{n}|k_i|+|l|)\,.$$
Also applying Cauchy inequality to (\ref{[chi]}), we have
\begin{equation}
\big|[\widetilde{\chi}]_k\big|_{A_k^i}\leq
|\widetilde{\chi}|_{A_k^i}\leq M_0,
\end{equation}
and
\begin{equation}\label{[chi]_k}
\max_{0\leq\mu\leq\mu_0+1}\big|D^{\mu}[\chi]_k\big|_{A_k^i}\leq
\max_{0\leq\mu\leq\mu_0+1}|D^{\mu}\chi|_{A_k^i}\leq tM_1.
\end{equation}

Now from \eqref{three-ineq}, \eqref{|chi|-1}, \eqref{[chi]_k} and
$$
\min_{x\in
A_k^i}\max_{0\leq\mu\leq\mu_0}\big|D^{\mu}[\chi]_k(x)\big|_2\geq\min_{x\in
\mathcal {K}}\max_{0\leq\mu\leq\mu_0}\big|D^{\mu}[\chi]_k(x)\big|_2\geq
t\beta(\omega,\mathcal {K})\,,
$$
it permits the application of \Cref{Rlem:3}, with
$g=[\chi]_k|_{\widetilde{B}}$, $\tilde{g}=[\widetilde{\chi}]_k$,
$B=\widetilde{B}$. As a result we obtain
\begin{equation}\label{measure}
m\{x\in A_k^i~\big{|}~|[\widetilde{\chi}]_k(x)|\leq \varepsilon
\}\leq M\varepsilon^{\frac{1}{\mu_0}}\,,~
\mbox{for}~0<\varepsilon\leq\frac{t\beta}{2\mu_0+2}\,,
\end{equation}
where
$$M=3(2\pi
e)^{\frac{n}{2}}\frac{(\mu_0+1)^{\mu_0+2}}{(\mu_0+1)!}d^{n-1}(n^{-\frac{1}{2}}+2d+
\theta^{-1}d)(t\beta)^{-1-\frac{1}{\mu_0}}tM_1~.$$

On the other hand,
\begin{eqnarray}\label{|chi|}
\mathcal {K}\backslash\mathcal
{H}(\widetilde{\chi})&=&\bigcup_{k\neq 0}\Big\{x\in \mathcal
{K}~\big{|}~|\langle k,t\widetilde{\omega}(x) \rangle+2\pi
l(t\widetilde{\omega},k)|< \frac{\pi}{2}\,
\frac{t\gamma}{|k|^{\tau}}\Big\}\nonumber\\
&=&\bigcup_{k\neq 0}~\bigcup_{i=-\varphi_k}^{\varphi_k}\Big\{x\in
A_k^i~\big{|}~|\langle k,t\widetilde{\omega}(x) \rangle+2\pi l_i|<
\frac{\pi}{2}\, \frac{t\gamma}{|k|^{\tau}}\Big\}\nonumber\\
&=&\bigcup_{k\neq 0}~\bigcup_{i=-\varphi_k}^{\varphi_k}\Big\{x\in
A_k^i~\big{|}~|[\widetilde{\chi}]_k|< \frac{\pi}{2}\,
\frac{t\gamma}{|k|^{\tau}|\tilde{k}|}\Big\}\nonumber\\
&\subseteq&\bigcup_{k\neq
0}~\bigcup_{i=-\varphi_k}^{\varphi_k}\Big\{x\in
A_k^i~\big{|}~|[\widetilde{\chi}]_k|< \frac{\pi}{2}\,
\frac{t\gamma}{|k|^{\tau+1}}\Big\}\,.\nonumber
\end{eqnarray}
By means of (\ref{measure}) we get
\begin{eqnarray}\label{m(k)}
m(\mathcal {K}\backslash\mathcal
{H}(\widetilde{\chi}))&\leq&\sum_{k\neq
0}\sum_{i=-\varphi_k}^{\varphi_k}M\Big(\frac{\pi}{2}\frac{t\gamma}{|k|^{\tau+1}}\Big)^{\frac{1}{\mu_0}}\nonumber\\
&=&M\sum_{k\neq
0}(2\varphi_k+1)(\frac{\pi}{2}t\gamma)^{\frac{1}{\mu_0}}\Big(\frac{1}{|k|^{\tau+1}}\Big)^{\frac{1}{\mu_0}}\,,\nonumber
\end{eqnarray}
provided that
\begin{equation}\label{cond:1}
\frac{\pi}{2}\frac{t\gamma}{|k|^{\tau+1}}\leq \frac{\pi}{2}t\gamma
\leq \frac{t\beta}{2\mu_0+2}\,,
\end{equation}
for $k\in \mathbb{Z}^n\backslash\{0\}$. Due to (\ref{|l|}), we get
$$m(\mathcal {K}\backslash\mathcal
{H}(\widetilde{\chi}))\leq
M\Big(\frac{\pi}{2}t\gamma\Big)^{\frac{1}{\mu_0}}\Big(\frac{M_0}{\pi}+1\Big)\sum_{k\neq
0}\Big(\frac{1}{|k|^{\tau+1-\mu_0}}\Big)^{\frac{1}{\mu_0}}.$$
Because $\tau>(n+1)\mu_0$, series $\sum\limits_{k\neq
0}|k|^{-\frac{\tau+1-\mu_0}{\mu_0}}$ converges. Consequently,
\begin{eqnarray}
m(\mathcal {H}(\widetilde{\chi}))&\geq&m(\mathcal
{K})-\widetilde{M}\cdot(t\gamma)^{\frac{1}{\mu_0}}\cdot(t\beta)^{-1-\frac{1}{\mu_0}}\cdot
t\nonumber\\
&=&m(\mathcal
{K})-\widetilde{M}\gamma^{\frac{1}{\mu_0}}\beta^{-1-\frac{1}{\mu_0}}\,,\nonumber
\end{eqnarray}
where
\begin{equation}
\widetilde{M}=3(2\pi
e)^{\frac{n}{2}}\frac{(\mu_0+1)^{\mu_0+2}}{(\mu_0+1)!}d^{n-1}(n^{-\frac{1}{2}}+2d+\theta^{-1}d)\,\Big(\frac{\pi}{2}\Big)
^{\frac{1}{\mu_0}}\,\Big(\frac{M_0}{\pi}+1\Big)M_1 \sum\limits_{k\in
\mathbb{Z}^n\backslash \{0\}} |k|^{-\frac{\tau+1-\mu_0}{\mu_0}}~.
\end{equation}
Thus $m(\mathcal {H}(\widetilde{\chi}))\geq 0,$ when
\begin{equation}\label{gamma}
\gamma\leq \Big(\frac{m(\mathcal
{K})\beta^{1+\frac{1}{\mu_0}}}{\widetilde{M}}\Big)^{\mu_0}.
\end{equation}

It remains to show (\ref{cond:1}) through (\ref{gamma}). Using
$m(\mathcal {K})\leq (2d)^n$ and $M_0\geq 2\pi$, we obtain
\begin{equation}
\gamma^{\frac{1}{\mu_0}}<\frac{(2d)^n}{\widetilde{M}}\beta^{1+\frac{1}{\mu_0}}<\frac{(2d)^n
\cdot\beta^{1+\frac{1}{\mu_0}}}{3(2\pi
e)^{\frac{n}{2}}(\mu_0+1)d^{n+1}\cdot 2d\cdot M_1}~.
\end{equation}
By means of $6(2\pi e)^{\frac{n}{2}}>2^n\pi$,
$(\mu_0+1)^{\mu_0}>\mu_0+1$, and
$\beta\leq\max\limits_{0<\mu\leq\mu_0+1}\big|D^{\mu}[\chi]_k\big|_{A_k^i}\leq
M_1$, we have
$$\gamma<\frac{\beta}{\pi (\mu_0+1)}.$$
That completes the proof.

\end{proof}

\end{appendix}

\end{document}